\documentclass[12pt,oneside,a4paper,reqno]{amsart}
\usepackage[utf8]{inputenc}
\usepackage[hmargin=2.5cm,vmargin=2.5cm]{geometry}
\usepackage{microtype}

\usepackage{mathtools}
\usepackage{amssymb}
\usepackage{amsthm}
\usepackage{bbm}
\usepackage[foot]{amsaddr}
\usepackage{aligned-overset}
\usepackage{enumitem}
\usepackage{xcolor}
\usepackage{verbatim}
\usepackage{svg}
\usepackage{tikz}
\usepackage[sortcites,url=false,eprint=false,isbn=false,giveninits=true,maxnames=99,minnames=99,abbreviate=true,date=year]{biblatex}
\usepackage[colorlinks,allcolors=blue]{hyperref}
\hypersetup{breaklinks=true,colorlinks,allcolors=blue,bookmarksopen,
            pdfauthor={Emile Bukieda, Bj\"orn de Rijk}}
\numberwithin{equation}{section}

\newtheorem{theorem}{Theorem}[section]
\newtheorem{lemma}[theorem]{Lemma}

\newtheorem{proposition}[theorem]{Proposition}
\newtheorem{corollary}[theorem]{Corollary}
\theoremstyle{definition}
\newtheorem{definition}[theorem]{Definition}
\theoremstyle{remark}
\newtheorem{remark}[theorem]{Remark}

\DeclareFontFamily{U}{mathx}{\hyphenchar\font45}
\DeclareFontShape{U}{mathx}{m}{n}{
      <5> <6> <7> <8> <9> <10>
      <10.95> <12> <14.4> <17.28> <20.74> <24.88>
      mathx10
      }{}
\DeclareSymbolFont{mathx}{U}{mathx}{m}{n}
\DeclareFontSubstitution{U}{mathx}{m}{n}
\DeclareMathAccent{\widecheck}{0}{mathx}{"71}
\DeclareMathAccent{\wideparen}{0}{mathx}{"75}

\newcommand\restr[2]{{
  \left.\kern-\nulldelimiterspace 
  #1 
  \vphantom{\big|} 
  \right|_{#2} 
  }}

\definecolor{rred}{HTML}{db816f}
\definecolor{bblue}{HTML}{9bacde}
\definecolor{figblue}{HTML}{abc6e1}
\definecolor{figred}{HTML}{db816f}
\definecolor{figorange}{HTML}{FDDA9C}

\definecolor{darksalmon}{RGB}{233, 100, 89}

\newcommand{\C}{\mathbb{C}} 
\newcommand{\R}{\mathbb{R}} 
\newcommand{\Z}{\mathbb{Z}} 
\newcommand{\N}{\mathbb{N}} 
\newcommand{\iu}{\mathrm{i}} 
\newcommand{\eu}{\mathrm{e}} 

\newcommand{\ran}{\textnormal{ran}}
\newcommand{\ds}{\, \textnormal{d}s}
\newcommand{\dz}{\, \textnormal{d}z}
\newcommand{\dr}{\, \textnormal{d}r}

\newcommand{\eps}{\varepsilon}
\newcommand{\Eps}{\mathcal{E}}
\newcommand{\dxi}{\, \textnormal{d}\xi}
\newcommand{\dx}{\, \textnormal{d}x}
\newcommand{\dy}{\, \textnormal{d}y}

\newcommand{\per}{\textnormal{per}}
\newcommand{\El}{\mathcal{L}}
\newcommand{\A}{\mathcal{A}}
\newcommand{\vt}{\tilde{v}}

\title{Nonlinear stability of periodic waves in the Korteweg--de Vries equation under localized perturbations}
\author{Emile Bukieda}
\email{emile.bukieda@kit.edu}
\author{Bj\"orn de Rijk}
\address{Department of Mathematics, Karlsruhe Institute of Technology, Englerstra\ss e 2, 76131 Karlsruhe, Germany}
\email{bjoern.de-rijk@kit.edu}
\thanks{This work is funded by the Deutsche Forschungsgemeinschaft (DFG, German Research Foundation) - Project-ID 258734477 - SFB 1173.}
\begin{document}

\pagenumbering{arabic}
\setcounter{page}{1} 

\begin{abstract}
We investigate the stability and asymptotic behavior of spatially periodic \emph{cnoidal} waves in the Korteweg--de Vries equation subject to localized perturbations. Standard stability arguments in Hamiltonian systems break down in this setting, since localized perturbations preclude a characterization of stable periodic waves as strict minimizers of a suitable energy functional subject to finitely many constraints. As a result, the nonlinear stability of periodic waves under localized perturbations has remained a long-standing open problem in Hamiltonian systems, with previous results only addressing plane waves that can be reduced to constant states by passing to polar coordinates. In this paper, we develop a novel method that resolves this obstruction by combining variational arguments, Floquet--Bloch theory, and Duhamel-based estimates with spatiotemporal modulation. Our framework applies to general periodic waves in Hamiltonian systems with symmetry and reduces the nonlinear stability problem to verifying \emph{diffusive} spectral stability conditions for the second variation of a suitable conserved energy. Applying our approach to cnoidal waves in the Korteweg--de Vries equation, we obtain the first nonlinear stability result for periodic waves in Hamiltonian systems under localized perturbations that cannot be reduced to constant states.
\end{abstract}

\keywords{Korteweg--de Vries equation, Hamiltonian systems, periodic waves, nonlinear stability, Floquet--Bloch theory, diffusive spectral stability, variational methods}

\subjclass[2020]{35B10, 35Q53, 37K45, 37K58}

\maketitle

\section{Introduction} \label{introduction}

In Hamiltonian systems, periodic waves naturally emerge as coherent structures that persist over time. Notable examples include water and plasma waves, soliton trains in nonlinear optics, and waves in nonlinear oscillator chains; see~\cite{Jeffrey1972weak, miura1968korteweg,schneider2017nonlinear,geyer2025stability} and references therein. Given the physical relevance and structural persistence of periodic waves, understanding their dynamical (or nonlinear) stability is a problem of fundamental importance.

To date, nonlinear stability results for periodic standing or traveling waves in Hamiltonian systems have primarily addressed co-periodic and subharmonic perturbations, whose periods are integer multiples of that of the underlying wave; see~\cite{geyer2025stability,benzonigavage2016coperiodic,Benzoni2013Stability} for an overview of results and abstract stability criteria. However, a longstanding open question concerns their stability under localized perturbations: a natural scenario in many applications. For instance, in hydrodynamics~\cite{clamond2003cnoidal,osborne1994shallow}, periodic waves naturally arise on spatially extended domains, making their interaction with localized perturbations of primary interest~\cite{osborne1994shallow,hoefer2023kdv}. Notably, the question of nonlinear stability of periodic waves in the Korteweg--de Vries equation under non-periodic perturbations was already posed by Benjamin in his 1974 lecture series on nonlinear wave motion~\cite{benjamin1974lectures}.

To the best of the authors' knowledge, there are only two nonlinear stability results for periodic waves in Hamiltonian systems under localized perturbations in the current literature. These results concern plane waves in the complex Klein--Gordon and nonlinear Schr\"odinger equations~\cite{bukieda2025orbital,zhidkov2001korteweg}, which are monochromatic periodic waves with a single nonzero Fourier mode. The analyses in~\cite{bukieda2025orbital,zhidkov2001korteweg} rely heavily on the system's gauge symmetry, which permits a reduction of the plane waves to constant states by passing to polar coordinates. 

In this paper, we develop a novel stability theory that applies to general periodic waves in Hamiltonian systems, including ones that cannot be reduced to constant states. We apply our method to the Korteweg--de Vries (KdV) equation
\begin{align} \label{KdV}
    u_t + u u_x + u_{xxx} = 0, \qquad x,t \in \R, \, u(x,t) \in \R,
\end{align}
which is a classical model~\cite{Korteweg1895,Boussinesq1877} for unidirectional wave propagation in various physical settings, such as hydrodynamics, elastic rods, and lattice waves. As such, the KdV equation has been the subject of ongoing interest in both the mathematical and physical literature; see, for instance, the surveys~\cite{benjamin1974lectures,Jeffrey1972weak,miura1968korteweg,schneider2017nonlinear} and references therein. Our focus here is on the KdV equation as a paradigmatic example of a Hamiltonian system that admits a family of periodic waves with infinitely many nonzero Fourier modes, for which no reduction to constant states is available. Specifically, the periodic traveling-wave solutions of~\eqref{KdV} form a four-parameter family given by
\begin{align} \label{e:cnoidal_form}
u(x,t) = h + 12 \kappa^2 \Eps \, \mathrm{cn}^2(\kappa(x-ct-x_0),\Eps),
\end{align}
where $\mathrm{cn}(\cdot,\Eps)$ denotes the Jacobi elliptic cosine function with modulus $\Eps \in (0,1)$, $h \in \R$ represents the height of the wave, $\kappa \in (0,\infty)$ corresponds to the rescaled wave number, $x_0$ is the initial phase offset, and 
\begin{align} \label{e:wavespeed}
c = 4\kappa^2(2\Eps - 1) + h 
\end{align}
is the associated wave speed; see Appendix~\ref{appendixPhPA} for more details. In analogy to the shape of their profiles, these periodic waves are commonly referred to as \emph{cnoidal waves}. In the long-wavelength limit $\Eps \to 1$, they converge to the well-known soliton solution $u(x,t) = h + 12 \kappa^2 \, \mathrm{sech}^2(\kappa(x-ct-x_0))$ of~\eqref{KdV} propagating with wave speed $c = 4\kappa^2 + h$.

Cnoidal waves play a key role as fundamental coherent structures in the KdV equation and beyond. In particular, \emph{cnoidal wave theory} provides a unifying framework for describing nonlinear wave phenomena, including water and plasma waves~\cite{osborne1994shallow, clamond2003cnoidal,nezlin1993physics}. Moreover, dispersive shock waves in the KdV equation and more sophisticated fluid and geophysical models can be understood as modulations of cnoidal waves~\cite{el2016dispersive}.

Orbital stability of periodic traveling wave solutions to~\eqref{KdV} has been established with respect to co-periodic perturbations~\cite{mckean1977stability,angulo2006stability,johnson_nonlinear_2010} and subharmonic perturbations~\cite{deconinck2010orbital}. This entails that, for any $n \in \N$, solutions to~\eqref{KdV} in $H^1_\per(0,n\ell)$ starting sufficiently close to an $\ell$-periodic cnoidal wave remain close to its symmetry orbit consisting of spatial translates. In addition, considering the linearization of~\eqref{KdV} about a cnoidal wave on $H^s(\R)$, it has been shown for all $s \in \N_0$ that the spectrum is confined to the imaginary axis~\cite{bottman2009kdv} and that the associated $C_0$-group is bounded~\cite{rodrigues2018linear}. These spectral and linear stability results strongly point toward the nonlinear stability of cnoidal waves under localized perturbations. The general stability theory that we develop in this paper answers this question in the affirmative. It reduces the orbital stability problem to a \emph{diffusive spectral stability} problem for the second variation of a conserved energy, without relying on integrability of the underlying equation; see Remark~\ref{rem:diff_energies_in_other_systems}. Before stating our main result, we illustrate this principle in the setting of a general Hamiltonian system with symmetry. 

\subsection{Principle of diffusive spectral stability} \label{sec:intro_principle} To demonstrate the broad applicability of our method, we consider a general Hamiltonian evolution equation on the line endowed with a one-parameter symmetry group. We formulate diffusive spectral stability conditions on the second variation of a conserved quantity under which our method yields a \emph{modulational} notion of nonlinear stability. This, in turn, leads to orbital stability of periodic waves with respect to localized perturbations, locally uniform in space.

To treat both localized perturbations and $\ell$-periodic waves within a unified framework, we consider Hamiltonian systems that can be posed on both $L^2(\R)$ and $L^2_{\mathrm{per}}(0,\ell)$. Thus, let $X$ denote either of these Hilbert spaces, and let
\begin{align}\label{e:Hamilton}
u_t = \mathcal{J} \mathcal{H}'(u)
\end{align}
be an abstract Hamiltonian evolution equation on $X$, where $\mathcal{J}$ is a skew-symmetric (pseudo-)differential operator and the Hamiltonian $\mathcal{H} \colon Y \to \R$ is a smooth nonlinear functional defined on a dense subspace $Y$ of $X$. Suppose that~\eqref{e:Hamilton} is invariant under the action of a strongly continuous one-parameter symmetry group $T(\phi)$, $\phi \in \R$, acting on $X$ such that $T(\phi)$ leaves $Y$ invariant. In particular, we require that $\mathcal{H}(T(\phi) u) = \mathcal{H}(u)$ for all $u \in Y$ and $\phi \in \R$. For a foundational treatment of Hamiltonian evolution equations and further background, we refer to~\cite{Kuksin_2000_analysis,Olver_1980_Hamiltonian}.

Let $w$ be an stationary solution to~\eqref{e:Hamilton} in $L^2_\per(0,\ell)$. Take a smooth nonlinear energy functional $E \colon Z \to \R$, defined on a dense subspace $Z$ of $X$, that is conserved by the flow of~\eqref{e:Hamilton}. Suppose that the $\ell$-periodic wave $w$ is a critical point of $E$, i.e.~$E'(w) = 0$. Since $w$ is a stationary solution to~\eqref{e:Hamilton}, a typical choice for this conserved quantity is $E = \mathcal{H}$, although, as Hamiltonian systems can admit multiple conserved quantities, alternative choices may also be possible.

In order to formulate spectral stability conditions on the second variation $E''(w)$, we write it as a bilinear form $\smash{E''(w)[v_1,v_2] = \langle \A v_1,v_2\rangle_{L^2(\R)}}$ and characterize the spectrum of the associated operator $\A$ on $L^2(\R)$. Note that $\A$ is typically a self-adjoint differential operator with $\ell$-periodic coefficients and so, by Floquet--Bloch theory~\cite{Reed1978Methods,Scarpellini_1999_Stability,Kuchment_1993_operator,Lewin_2024_spectral}, its $L^2(\R)$-spectrum is determined by the family of Bloch operators
\begin{align} \label{e:def_Bloch}
\A(\xi) u = \eu^{-\iu \xi \cdot} \A\big[\eu^{\iu \xi \cdot } u\big], \qquad \xi \in \C,
\end{align}
acting on $L^2_\per(0,\ell)$. For each fixed $\xi \in \C$, the operator $\A(\xi)$ has compact resolvent, and hence its spectrum consists of isolated eigenvalues with finite algebraic multiplicities only. In contrast, the spectrum of $\A$ on $L^2(\R)$ is purely essential and arises from the union of the spectra of $\A(\xi)$ for real values of $\xi$. That is, we have the spectral relation
\begin{align} \label{e:spectral_rel}
\sigma_{L^2(\R)}(\A) = \bigcup_{\xi \in [-\frac{\pi}{\ell},\frac{\pi}{\ell})} \sigma_{L^2_{\per}(0,\ell)}(\A(\xi)).
\end{align}
Since~\eqref{e:Hamilton} is invariant under the action of the symmetry group $T(\phi)$, $0$ is an eigenvalue of $\A(0)$ with eigenfunction $A_0 w$, where $A_0$ is the generator of the group $T(\phi)$ acting on $L^2_\per(0,\ell)$. Consequently, the $L^2(\R)$-spectrum of $\A$ must touch the imaginary axis at the origin, implying that $\A$ is positive \emph{semi}-definite at best, and hence $-\A$ is only marginally spectrally stable. The notion of \emph{diffusive spectral stability}, see Figure~\ref{fig1}, captures precisely this property, while additionally requiring that the touching at the origin is nondegenerate.

\begin{definition} \label{def:diffusive_spec}
The operator $-\A$ is \emph{diffusively spectrally stable}, if there exists a constant $\theta_0 > 0$ such that:
\begin{itemize}
    \item[(i)] For all $\xi \in [-\frac\pi\ell,\frac\pi\ell)$ we have
\begin{align}    
\inf \sigma_{L^2_\per(0,\ell)}(\A(\xi)) \geq \theta_0 \xi^2. \label{e:coercivity_diffusive}\end{align}
    \item[(ii)] $0$ is a simple eigenvalue of $\A(0)$ with eigenfunction $A_0w$.
\end{itemize}
\end{definition}

\begin{figure}[ht!]
\begin{minipage}{0.45\textwidth}
\centering
\includegraphics[width=\textwidth]{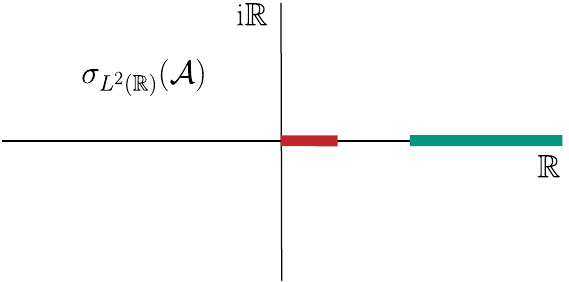}
\end{minipage} \hfill
\begin{minipage}{0.45\textwidth}
\centering
\includegraphics[width=\textwidth]{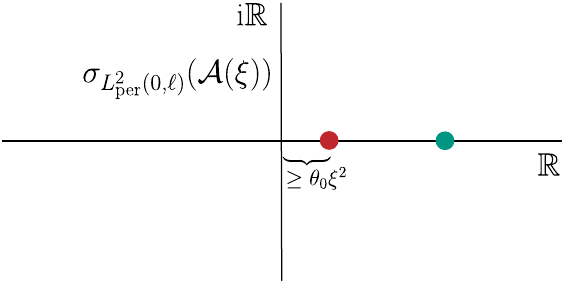}
\end{minipage}
\caption{\small Left: $L^2(\R)$-spectrum of an operator $\mathcal{A}$, where $-\mathcal{A}$ is diffusively spectrally stable in the sense of Definition~\ref{def:diffusive_spec}. Right: the $L^2_{\text{per}}(0,\ell)$-spectrum of the corresponding Bloch operator $\mathcal{A}(\xi)$ for some fixed $\xi \in [-\frac\pi\ell,\frac\pi\ell)$.} \label{fig1}
\end{figure}

Assuming that $-\A$ is diffusively spectrally stable, the simple eigenvalue $0$ of $\A(0)$ can be locally continued in $\xi$ using standard perturbation theory~\cite{kato_perturbation_2013}, resulting in an eigenvalue $\lambda_c(\xi)$ of $\A(\xi)$ of smallest real part, which is analytic in $\xi$ and obeys the expansion
\begin{align} \label{e:expansion}
\lambda_c(\xi) = d \xi^2 + \mathcal{O}(\xi^4), \qquad |\xi| \ll 1,
\end{align}
with diffusivity coefficient $d > 0$. Using the Floquet--Bloch decomposition~\eqref{e:spectral_rel}, the critical $L^2(\R)$-spectrum of $\A$ can be parameterized by $\lambda_c(\xi)$, which behaves as the Fourier symbol of the diffusion operator $-d\partial_x^2$ for $|\xi| \ll 1$. Although $-d\partial_x^2$ is only positive semi-definite, the identity
\begin{align} \label{e:coercivity_diffusion}
\langle -du'', u \rangle_{L^2(\R)} = d\|u'\|^2_{L^2(\R)}, \qquad u \in H^2(\R),
\end{align}
demonstrates that coercivity can be recovered at the cost of a derivative. The analysis in this paper shows that diffusive spectral stability of $-\A$ yields a similar \emph{coercivity estimate at the cost of a derivative}. This property ultimately allows us to establish nonlinear stability of $w$ against localized perturbations, despite the lack of positive definiteness of $\A$ on any finite-codimensional subspace.

\subsubsection{\texorpdfstring{\bf Modulational nonlinear stability}{Modulational nonlinear stability}} We illustrate how coercivity at the cost of a derivative shapes the resulting notion of nonlinear stability. To this end, we provide a heuristic description of the dynamics of localized perturbations under diffusive spectral stability of $-\A$. The nondegenerate spectral touching of $\A$ at the origin with associated Bloch eigenfunction $A_0w$ shows that the neutral directions of the conserved energy $E$ near its critical point $w$ are associated with the symmetry orbit $T(\phi)w$. Moreover, the presence of infinitely many modes $\lambda_c(\xi)$ of $\A$ accumulating at $0$ indicates that localized perturbations can excite the symmetry in a nonuniform manner across the spatial domain, giving rise to a \emph{spatiotemporal modulation} of the wave of the form
\begin{align*}
w_{\text{mod}}(x,t) = T(\varphi(x,t))w(x).
\end{align*}
To capture this behavior, we represent solutions to~\eqref{e:Hamilton} as
\begin{align} \label{e:decomp}
u(x,t) = T(\varphi(x,t))w(x) + z(x,t),
\end{align}
where $z(x,t)$ is a small remainder term and the modulation function $\varphi(x,t)$ tracks the leading-order neutral dynamics. We emphasize that spatiotemporal modulation of periodic waves subject to localized perturbations has been observed and analyzed in a broad class of dissipative systems, cf.~\cite{doelman_dynamics_2009,johnson2011nonlinear,sandstede2012diffusive,johnson_nonlinear_2010,johnson_2014_behavior}. The analysis in this paper shows that this phenomenon persists in the current conservative setting.

The key advantage of the decomposition~\eqref{e:decomp} is that the loss of positive definiteness can be confined to $\varphi$, allowing for a coercivity estimate on $z$. As noted above, diffusive spectral stability also yields a coercivity estimate for the \emph{derivative} $\varphi_x$. Consequently, both $\varphi_x(t)$ and $z(t)$ are expected to remain small in $H^s(\R)$ over time for some $s \in \N$, which implies orbital stability of $w$, albeit only \emph{locally uniform in space}. More precisely, for every $R,\varepsilon > 0$ there exists $\delta > 0$ such that, for any initial perturbation $v_0 \in H^s(\R)$ with $\|v_0\|_{H^s(\R)} \leq \delta$, there exists a smooth modulation function $\varphi \colon \R \times [0,\infty) \to \R$ such that the solution $u(t)$ to~\eqref{e:Hamilton} with $u(0) = w + v_0$ obeys:
\begin{itemize}
    \item[(i)] \emph{(Modulational stability).} For all $t \geq 0$ we have
    \begin{align} \label{e:mod_stab_gen}
    \|u(t) - T(\varphi(t))w\|_{H^s(\R)} + \|\varphi_x(t)\|_{H^s(\R)} \leq \varepsilon;
    \end{align}
    \item[(ii)] \emph{(Local orbital stability).} For all $x_* \in \R$ and $t \geq 0$ we have
    \begin{align*}
    \inf_{\phi \in \R} \|u(t) - T(\phi)w\|_{H^s(x_*-R,x_*+R)} \leq \varepsilon.
    \end{align*}  
\end{itemize}
The estimate~\eqref{e:mod_stab_gen} shows that the perturbed solution $u(t)$ remains close to the modulated wave $T(\varphi(t))w$, where the modulation function $\varphi(\cdot,t)$ varies only slowly, so that, locally, the resulting spatial deformation of the wave remains small. We emphasize that this modulational notion of nonlinear stability reflects the impact of derivative loss: while global-in-time smallness is obtained for $z(t)$ and $\varphi_x(t)$, no direct control is available for the $L^2(\R)$-norm of $\varphi(t)$ itself. In fact, $\|\varphi(t)\|_{L^2(\R)}$ can grow over time; see the discussion below Theorem~\ref{t:main_result} for details. 

\subsubsection{\texorpdfstring{\bf Diffusive spectral stability in the KdV equation}{Diffusive spectral stability in the KdV equation}} \label{sec:diffusive_KdV_intro} Let $u_{\mathrm{tw}}(x,t) = w(x-ct)$ be a traveling-wave solution to the KdV equation~\eqref{KdV} with wave speed $c \in \R$ and $\ell$-periodic profile $w \colon \R \to \R$. We identify a conserved quantity for which $w$ is a critical point and whose second variation corresponds to a diffusively spectrally stable operator. Setting $\mathcal{J} = \partial_x$ and defining $\mathcal{H} \colon H^1(\R) \to \R$ by 
\begin{align*}
\mathcal{H}(u) = \int_\R \frac12 u_x^2 - \frac16 u^3 + \frac12 c u^2 + c_1 u \dx,
\end{align*}
where $c_1 \in \R$ is a constant, the Hamiltonian system~\eqref{e:Hamilton} becomes
\begin{align} \label{cKdV}
    u_t - cu_x + uu_x + u_{xxx} = 0.
\end{align}
Equation~\eqref{cKdV} corresponds to the KdV equation~\eqref{KdV} written in the frame moving with speed $c$. Its symmetry group is the translational group $T(\phi)u = u(\cdot - \phi)$, $\phi \in \R$ with generator $\mathcal{J} \colon H^1(\R) \to L^2(\R)$. 

Since $w$ is a stationary solution of~\eqref{cKdV}, we may fix $c_1 \in \R$ such that $w$ solves the associated Euler--Lagrange equation 
\begin{align} \label{e:Euler_Lagrange_1}
  c w - \frac12 w^2 - w'' + c_1 = 0,
\end{align}
that is $\mathcal{H}'(w) = 0$. The second variation $\mathcal{H}''(w)$ corresponds to a second-order $\ell$-periodic differential operator $A \colon H^2(\R) \to L^2(\R)$, given by
\begin{align} \label{e:defL1}
A = -\partial_x^2+c-w,
\end{align}
whose $\smash{L^2_\per(0,\ell)}$-kernel is spanned by $\mathcal{J}w = w'$. Consequently, the linearization of~\eqref{cKdV} about $w$ is given by $\El = \mathcal{J}A$. Since $w'$ has at least two zeros on a periodicity interval, a standard Sturm--Liouville argument~\cite{zettl_sturm_2005} implies that $-A$ is indefinite as an operator on $L^2(\R)$ and therefore not diffusively spectrally stable.

However, a conserved quantity with positive semi-definite second variation has been constructed in~\cite{deconinck2010orbital}, based on ideas developed in~\cite{maddocks1993stability}. This energy is given by 
\begin{align} \label{e:def_energy}
E_\gamma \colon H^2(\R) \to \R, \qquad  E_\gamma(u) = \mathcal{H}_2(u) + \gamma \mathcal{H}(u)
\end{align}
with 
\begin{align*}
    \mathcal{H}_2(u) = \int_\R \frac12 u_{xx}^2 - \frac{5}{6}uu_x^2 + \frac{5}{72} u^4 - \frac16 c_1 u^2 - \frac12 c^2 u^2 + c_2 u \dx,
\end{align*}
constant $c_2 \in \R$, and free parameter $\gamma \in \R$. The Hamiltonian system $u_t = \mathcal{J} \mathcal{H}_2'(u)$ corresponds to the fifth-order PDE
\begin{align} \label{e:KdV2}
u_t - \frac13 c_1 u_x -  c^2 u_x + \frac5{6} u^2 u_x + \frac{10}3 u_x u_{xx} + \frac53 u u_{xxx} + u_{xxxxx} = 0.
\end{align}
Equation~\eqref{e:KdV2} is the second nontrivial member of the \emph{KdV hierarchy}, which is an infinite sequence of mutually commuting Hamiltonian flows that was introduced in~\cite{miura1968korteweg,Gardner_1974_Korteweg,ablowitz_solitons_1981,lax_integrals_1968}. The KdV equation~\eqref{KdV} itself constitutes the first nontrivial equation in the hierarchy. The hierarchy is generated through the \emph{Lenard recursion scheme} associated with the Hamiltonian structure. The first Lenard recursion relation takes the form
\begin{align} \label{e:Lenard}
\mathcal{J}\mathcal{H}_2'(u) = \mathcal{J}_2(u) \mathcal{H}'(u),
\end{align}
where $\mathcal{J}_2(y)$ is the skew-symmetric differential operator
\begin{align} \label{e:defJ2}
\mathcal{J}_2(y) = \partial_x^3 + \frac23 y \partial_x + c \partial_x + \frac13 y'.
\end{align}
Since $w$ satisfies $\mathcal{H}'(w) = 0$, it follows from~\eqref{e:Lenard} that $w$ is a stationary solution to~\eqref{e:KdV2}. Hence, we may fix $c_2 \in \R$ such that $w$ solves the associated Euler--Lagrange equation
\begin{align} \label{e:Euler_Lagrange_2}
- \frac13 c_1 w -  c^2 w + \frac5{18} w^3 + \frac{5}{6} (w')^2+\frac{5}{3}w w'' + w'''' + c_2 = 0,
\end{align}
that is $\mathcal{H}_2'(w) = 0$. Consequently, $w$ is a critical point of the energy $E_\gamma$ for each $\gamma \in \R$. We note that, using the explicit cnoidal-wave representation~\eqref{e:cnoidal_form} of the periodic traveling wave $u_{\mathrm{tw}}$, the integration constants $c_{1,2} \in \R$ in the Euler--Lagrange equations~\eqref{e:Euler_Lagrange_1} and~\eqref{e:Euler_Lagrange_2} may be expressed in terms of the elliptic modulus $\Eps$, the height $h$, and the rescaled wave number $\kappa$; see Appendix~\ref{appendixPhPA}.

The analysis in~\cite{deconinck2010orbital} demonstrates that there exists a nonempty open interval $I \subset \R$ such that the second variation $E_\gamma''(w)$, which corresponds to the differential operator $\A_\gamma \colon H^4(\R) \to L^2(\R)$ given by
\begin{align} \label{e:defLgamma}
\A_\gamma = \partial_x^4 + \frac{5}{3}w\partial_x^2+\frac{5}{3}w'\partial_x+\frac{5}{3}w''+\frac{5}{6}w^2-\frac13 c_1 - c^2 +\gamma(-\partial_x^2+c-w),
\end{align}
is positive semi-definite for all $\gamma \in I$. In fact, a refined spectral analysis of the Bloch operators $\A_\gamma(\xi)$ shows that $-\A_\gamma$ is diffusively spectrally stable for all $\gamma \in I$, which serves as the key ingredient for our nonlinear stability analysis.

\begin{theorem}[Diffusive spectral stability] \label{thm:diffusive_intro}
Let $u_{\text{tw}}(x,t) = w(x-ct)$ be a traveling-wave solution to the KdV equation~\eqref{KdV} with wave speed $c \in \R$ and profile $w\colon \R \to \R$ of fundamental period $\ell > 0$. Fix $c_{1,2} \in \R$ such that $w$ satisfies the Euler--Lagrange equations~\eqref{e:Euler_Lagrange_1} and~\eqref{e:Euler_Lagrange_2}. Then, there exists a nonempty open interval $I \subset \R$ such that for each $\gamma \in I$ the operator $-\A_\gamma$ is diffusively spectrally stable with $\ker(\A_\gamma(0)) = \textnormal{Sp}\{w'\}$. 
\end{theorem}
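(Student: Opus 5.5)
The plan is to exploit the integrable structure through the squared-eigenfunction / Lax-pair connection, as in \cite{deconinck2010orbital,bottman2009kdv}, and to upgrade their semi-definiteness statement to the quantitative bound \eqref{e:coercivity_diffusive}.

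\textbf{Step 1: reduction to the Lax spectral parameter.} By the Lenard relation \eqref{e:Lenard}, $\mathcal{J}\A_\gamma = (\mathcal{J}_2(w) + \gamma\mathcal{J})A$ on the linearized level. Hence $\A_\gamma$ and $\El = \mathcal{J}A$ share the Bloch eigenfunctions given by squared eigenfunctions of the Lax (Schr\"odinger) operator $-\partial_x^2 - w/6$. Concretely, for each $\xi$ the Bloch operators $\A_\gamma(\xi)$ and $\El(\xi)$ commute in the appropriate sense, and the eigenfunctions of $\El(\xi)$ are products $\psi_1\psi_2$ of Floquet solutions of the Lax problem. These products are complete in $L^2_\per(0,\ell)$ for $\xi \neq 0$, and are supplemented by generalized eigenfunctions at $\xi = 0$.

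\textbf{Step 2: explicit eigenvalue formula.} On such a squared eigenfunction with Lax parameter $\zeta$ on the real spectral bands, the eigenvalues of $\El$ and $\A_\gamma$ are explicit. Following \cite{deconinck2010orbital}, $\A_\gamma$ acts on it with eigenvalue
\[
\Lambda_\gamma(\zeta) = \Omega_2(\zeta)/\Omega(\zeta) + \gamma,
\]
up to normalization. Here $\Omega,\Omega_2$ are the polynomial multipliers determined by the two flows, namely a cubic in $\zeta$ times $\sqrt{\text{spectral polynomial}}$. The ratio is a linear (or quadratic) function of $\zeta$, and the genus-one spectral structure makes it sign-definite on the bands exactly when $\gamma$ lies in an interval $I$ determined by the band edges. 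In this way I obtain nonnegativity for $\gamma \in I$, with zero attained only at the band edge that corresponds, at $\xi = 0$, to $w'$.

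\textbf{Step 3: quantitative lower bound and simplicity of the kernel.} The Floquet exponent map $\zeta \mapsto \xi(\zeta)$ near that band edge behaves like a square root, $\xi \sim \sqrt{\zeta - \zeta_0}$. Meanwhile $\Lambda_\gamma$ vanishes linearly at $\zeta_0$ (I choose $\gamma$ in the interior of $I$ so that it vanishes only there). Consequently $\Lambda_\gamma \ge \theta\,\xi^2$ locally. Away from $\zeta_0$, $\Lambda_\gamma$ is bounded below by a positive constant on compact parts of the bands, and it grows like $\zeta^2$ at infinity. Combining the local and global bounds yields \eqref{e:coercivity_diffusive} uniformly in $\xi \in [-\frac\pi\ell,\frac\pi\ell)$. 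At $\xi = 0$, I must check that the periodic kernel of $\A_\gamma(0)$ is only $w'$. The other $\xi = 0$ band edges give positive values for $\gamma$ interior to $I$. The generalized kernel directions $\partial_h w$, $\partial_\kappa w$, $\partial_\Eps w$ of $\El(0)$ must be verified directly, using $\A_\gamma w' = 0$ and the block structure of $\A_\gamma(0)$ on $\ker \El(0)^2$.

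\textbf{Main obstacle.} I expect the main obstacle to be the completeness and degeneracy issues at $\xi = 0$ and at band edges, where squared eigenfunctions fail to span. Simplicity of the zero eigenvalue requires showing that the $3\times 3$ (or $4\times4$) matrix of $E_\gamma''(w)$ restricted to the generalized kernel has only the one-dimensional kernel. I would attack this via the explicit elliptic-function formulas of Appendix~\ref{appendixPhPA}, possibly shrinking $I$. Alternatively, perturbation theory in $\xi$ can show that $\lambda_c(\xi) = d\xi^2 + \mathcal{O}(\xi^4)$ with $d > 0$, computed via the Lax-parameter expansion.
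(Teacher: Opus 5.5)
There is a genuine gap in Steps 2--3, at exactly the point where the factor $\xi^2$ has to come from.

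The Lenard relation gives $\mathcal{J}\A_\gamma = (\mathcal{J}_2(w)+\gamma\mathcal{J})A$. So it is $\mathcal{J}\A_\gamma$, not the self-adjoint $\A_\gamma$, that shares the squared eigenfunctions $\phi_j(\xi)$ of $\El(\xi)$, and $\A_\gamma(\xi)$ does not act on them by eigenvalues. What one actually gets is $\A_\gamma(\xi)\phi_j(\xi) = -\beta_j A(\xi)\phi_j(\xi) = \beta_j\nu_j\tilde\phi_j(\xi)$. Here $\beta_j = \eta_j(\xi)-\gamma+c$ is your ratio $\Omega_2/\Omega+\gamma$ up to sign and normalization, $\nu_j=\nu(\eta_j(\xi))$, and $\tilde\phi_j(\xi)$ are the biorthogonal eigenfunctions of $\El(\xi)^*$. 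The quadratic form is therefore diagonal only in the non-orthogonal coordinates: $\langle \A_\gamma(\xi)z,z\rangle_{L^2(0,\ell)} = \sum_j \beta_j\nu_j|\langle z,\tilde\phi_j(\xi)\rangle_{L^2(0,\ell)}|^2$.

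Your mechanism for $\xi^2$ is also not what happens. The sign-definite quantity is the product $\beta\nu$; the ratio $\beta$ alone is negative on $(-\infty,4(\Eps-1)]$ and positive on $[c,4\Eps]$, and this is compensated by $\nu$. For every $\gamma$ in the open interval $I=(4(3\Eps-2),4(4\Eps-2))$ (normalized variables of Appendix~\ref{appendixSpectral}), the zero of $\beta$ lies in the spectral gap. Hence $\beta\nu\geq\theta_*(\gamma)>0$ on all bands, and nothing vanishes at a band edge.

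The degeneracy sits in the basis instead, and it must: with $\xi$-uniform Riesz constants you would get $\langle\A_\gamma(0)z,z\rangle\geq\theta\|z\|^2$, contradicting $\A_\gamma(0)w'=0$. The three critical squared eigenfunctions, $j\in\{0,\pm1\}$, are only bounded by $C|\xi|^{-1}$. The missing ingredient is the quantitative lower Riesz bound $\sum_j|\langle z,\tilde\phi_j(\xi)\rangle|^2\geq\varsigma\xi^2\|z\|_{L^2(0,\ell)}^2$ of Lemma~\ref{lem:Riesz}. It is derived from $\|\xi\phi_j(\xi)\|,\|\tilde\phi_j(\xi)\|\leq C$ for $j\in\{0,\pm1\}$ and $\|\phi_k(\xi)-\tilde\phi_k(\xi)\|\leq C/k^2$ for the high modes (Proposition~\ref{prop:eigenfunctions}). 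Mere completeness, as in your Step 1, is not enough, and your ``combine local and global bounds'' step has nothing to act on without it. With it, condition (i) follows at once with $\theta_0=\varsigma\theta_*$.

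The kernel claim is also not secured. Your perturbative fallback is circular: continuing $0$ to an analytic $\lambda_c(\xi)$ via Lemma~\ref{lem:low_freq} presupposes that $0$ is simple. The block computation on the generalized kernel of $\El(0)$ would additionally need a completeness statement at $\xi=0$ and control of the cross terms with all other modes, which you do not supply.

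The paper never works at $\xi=0$ directly. Since $A(\xi)$ is self-adjoint and $\nu_j$ is real, $\langle A(\xi)z,\phi_j(\xi)\rangle = -\nu_j\langle z,\tilde\phi_j(\xi)\rangle$. Hence $\langle\A_\gamma(\xi)z,z\rangle=\sum_j(\beta_j/\nu_j)|\langle A(\xi)z,\phi_j(\xi)\rangle|^2\geq\varsigma\theta_*\|A(\xi)z\|^2$, using the $\xi$-uniform lower Riesz bound for $\{\phi_j(\xi)\}$ and $\beta/\nu\geq\theta_*$. Letting $\xi\to 0$ gives $\ker\A_\gamma(0)\subset\ker A(0)=\textnormal{Sp}\{w'\}$ by \cite{angulo2006stability}, and self-adjointness then gives simplicity.
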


The proof of Theorem~\ref{thm:diffusive_intro}, which we delegate to Appendix~\ref{appendixSpectral}, exploits that the Bloch eigenfunctions of $\mathcal{J}\A_\gamma$ coincide with those of the linearization $\El = \mathcal{J} A$ of~\eqref{cKdV} about $w$ by the commuting-flow property. Consequently, the action of the Bloch operator $\A_\gamma(\xi)$ can be represented in terms of biorthogonal families of eigenfunctions of $\El(\xi)$ and its adjoint. These families of eigenfunctions form unconditional bases~\cite{rodrigues2018linear} and can be completely characterized via the \emph{squared-eigenfunction connection} arising from the Lax-pair formulation of the KdV equation~\cite{ablowitz_solitons_1981,bottman2009kdv}. This ultimately leads to the coercivity estimate~\eqref{e:coercivity_diffusive}.

\begin{remark} \label{rem:diff_energies_in_other_systems}
We stress that the existence of an appropriate conserved energy $E$, whose second variation corresponds to a diffusively spectrally stable operator, is not a specific feature of the KdV equation, nor of integrable systems in general. In particular, whenever a periodic wave $w$ is a \emph{ground state} of the Hamiltonian $\mathcal{H}$, diffusive spectral stability of its second variation is generic. Representative examples in non-integrable Hamiltonian systems with symmetry include periodic \emph{dnoidal} waves in the defocusing Gross--Pitaevskii equation with periodic potential~\cite{Haragus_2008_spectra}, as well as plane waves in the complex Klein--Gordon equation~\cite{bukieda2025orbital}. 
\end{remark}

\subsection{Main result} \label{sec:main_result}
We state our main result, which establishes nonlinear modulational stability of periodic traveling waves in the KdV equation against localized perturbations.

\begin{theorem}[Nonlinear modulational stability] \label{t:main_result}
Let $u_{\text{\upshape tw}}(x,t) = w(x-ct)$ be a periodic traveling-wave solution to the KdV equation~\eqref{KdV} with wave speed $c \in \R$ and profile $w\colon \R \to \R$ of fundamental period $\ell > 0$. Fix $k \in \N_0$. Then, there exist constants $M,\delta > 0$ such that, whenever $v_0 \in H^3(\R)$ satisfies
\begin{align*}E_0 \coloneqq\|v_0\|_{H^2(\R)} \leq \delta,\end{align*}
there exist a global classical solution
\begin{align} \label{e:regtooth}
u \in C\big([0,\infty),H^3(\R) \oplus H^3_\per(0,\ell)\big) \cap C^1\big([0,\infty),L^2(\R) \oplus L^2_\per(0,\ell)\big)
\end{align}
to~\eqref{KdV} with initial condition $u(0) = w+v_0$ and a smooth modulation function $\varphi \colon \R \times [0,\infty) \to \R$ such that
\begin{align} \label{e:modulational_KdV}
\|u(t) - w(\cdot - c t + \varphi(\cdot,t))\|_{H^2(\R)} + \|\nabla \varphi(t)\|_{H^k(\R)} + \frac{\|\varphi(t)\|_{L^2(\R)}}{1+t} \leq ME_0
\end{align}
for all $t \geq 0$, where $\nabla \varphi(t) = (\varphi_x(t), \varphi_t(t))^\top$ denotes the spacetime gradient. Moreover, we have
\begin{align} \label{e:orbital_KdV}
\inf_{\phi \in \R} \|u(t) - w(\cdot + \phi)\|_{H^2(x_* - R,x_* + R)} \leq M R E_0 
\end{align}
for each $t \geq 0$, $x_* \in \R$, and $R \geq 1$.
\end{theorem}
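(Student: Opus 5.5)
We combine the conserved energy $E_\gamma$ of~\eqref{e:def_energy}, the diffusive spectral stability of Theorem~\ref{thm:diffusive_intro}, and a Duhamel-based treatment of the modulation. We work in the comoving frame~\eqref{cKdV} and fix $\gamma \in I$. Global well-posedness of~\eqref{KdV} in $H^3(\R) \oplus H^3_\per(0,\ell)$, yielding~\eqref{e:regtooth}, is taken from the classical local theory together with a continuation argument driven by the a priori bounds below. For those bounds we write the solution as
\begin{align*}
u(x,t) = w(x+\varphi(x,t)) + z(x,t).
\end{align*}
The modulation $\varphi$ is defined by a Duhamel formula. Following the dissipative literature, we take $\varphi(t) = \chi(t)\,\mathcal{S}_p(t) v_0 + \int_0^t \chi(t-s)\,\mathcal{S}_p(t-s)\,\mathcal{N}(s)\ds$. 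Here $\mathcal{S}_p$ is the low-frequency Floquet--Bloch projection of the linear group $\eu^{\El t}$ onto the critical branch associated with $w'$, and $\chi$ is a smooth temporal cutoff.

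Since $\El(\xi)$ has purely imaginary spectrum and, by~\cite{rodrigues2018linear}, $\eu^{\El t}$ is uniformly bounded, $\mathcal{S}_p(t)$ is bounded from $L^2$ to $L^2$ only up to a factor $\xi^{-1}$ in the low-frequency symbol of $\varphi$. This singular factor is harmless after differentiation, so $\varphi_x$ and $\varphi_t$ (each costing one $\xi$) are bounded in $H^k$ for every $k$, because the projection is frequency-localized. In contrast, $\|\varphi(t)\|_{L^2}$ grows at most linearly in $t$, which produces the $(1+t)^{-1}$ weight in~\eqref{e:modulational_KdV}. This construction also keeps $\varphi$ smooth.

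The core step is the energy estimate for $z$, which relies on the coercivity of $\A_\gamma$ at the cost of a derivative. We first turn Theorem~\ref{thm:diffusive_intro} into the bound
\begin{align*}
\langle \A_\gamma v, v\rangle_{L^2(\R)} \geq \theta \|v - \Pi v\|_{H^2}^2 + \theta \|\partial_x \psi_v\|_{L^2}^2,
\end{align*}
where $\Pi v = \psi_v w'$ (interpreted through the Bloch decomposition) is the projection onto the critical branch. This follows from~\eqref{e:coercivity_diffusive} together with Gårding's inequality for the fourth-order operator at high frequencies.

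We then expand the conserved quantity as
\begin{align*}
E_\gamma(u) - E_\gamma(w) = \tfrac12\langle\A_\gamma v, v\rangle + \mathcal{O}(\|v\|_{H^2}^3), \qquad v = u - w.
\end{align*}
Because $E_\gamma$ is translation invariant, we replace $w$ by the modulated wave $w(\cdot+\varphi)$. The energy $E_\gamma(w(\cdot+\varphi))$ differs from $E_\gamma(w)$ by terms quadratic in $\varphi_x$ (and its derivatives), and its first variation at $w(\cdot+\varphi)$ is $\mathcal{O}(\varphi_x)$. Conservation of $E_\gamma$ then bounds $\|z(t)\|_{H^2}^2$ by $C(E_0^2 + \|\varphi_x(t)\|_{H^3}^2 + \|z\|^3)$, provided $z$ is nearly orthogonal to the critical branch. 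That near-orthogonality is precisely what the choice of $\varphi$ through $\mathcal{S}_p$ must guarantee.

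The main obstacle is closing the bound on $\varphi_x$ and $\varphi_t$. The Duhamel nonlinearity $\mathcal{N}$ contains quadratic terms such as $z^2$, $\varphi_x z$, and $\varphi_x^2$. Since the group $\eu^{\El t}$ gives no decay, a naive estimate loses a factor of $t$. To handle this, we will exploit that the KdV nonlinearity is in conservation form, $\partial_x(\cdot)$, which cancels the $\xi^{-1}$ singularity. Where a genuinely non-derivative term remains, we will instead take the orthogonality condition itself from the energy argument rather than from Duhamel. In that approach, $\varphi$ is defined implicitly so that $\Pi z = 0$, and $\varphi_t$ is controlled from the modulation equation obtained by differentiating this constraint.

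All of this runs inside a continuation argument on the quantity $\eta(t) = \sup_{s\le t}\big(\|z(s)\|_{H^2} + \|\nabla\varphi(s)\|_{H^k} + (1+s)^{-1}\|\varphi(s)\|_{L^2}\big)$. The goal is to show $\eta \le C E_0 + C\eta^{3/2}$, which gives $\eta \le M E_0$ for $\delta$ small. Finally,~\eqref{e:orbital_KdV} follows from~\eqref{e:modulational_KdV} by choosing $\phi = \varphi(x_*,t)$. On $(x_*-R,x_*+R)$ we then have $|\varphi(x,t)-\phi| \le \sqrt{2R}\,\|\varphi_x\|_{L^2}$, and Taylor expansion of $w$ together with the $H^k$ bounds on $\varphi_x$ gives the factor $RE_0$.
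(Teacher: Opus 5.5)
Your energy step does not close, and the proposed repairs do not address the actual obstruction. Expanding $E_\gamma$ around the forward-modulated wave $w(\cdot+\varphi)$ and moving the $\varphi$-contributions to the right gives at best $\|z\|_{H^2}^2\le C(E_0^2+\|\varphi_x\|_{H^3}^2+\|z\|_{H^2}^3)$. Since $\|\varphi_x\|_{H^3}^2$ is only $\mathcal{O}(\eta^2)$, this is useless unless $\varphi_x$ is bounded by $C(E_0+\eta^{3/2})$ by some other means, and the Duhamel formula cannot supply such a bound. Even with a uniformly bounded group $\eu^{t\El}$ there is no temporal decay, so $\int_0^t\chi(t-s)\,\partial_x\mathcal{S}_p(t-s)\mathcal{N}(s)\ds$ is only $\mathcal{O}(t\,\eta^2)$. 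Writing $\mathcal{N}$ in conservation form removes a factor $\xi$, not the factor $t$. Your fallback (implicit orthogonality) addresses $\varphi_t$ but says nothing about $\varphi_x$.

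There is a second problem. The form you need to be coercive is $E_\gamma''(w(\cdot+\varphi))[z,z]$, whose coefficients are not $\ell$-periodic, so neither Theorem~\ref{thm:diffusive_intro} nor a Bloch condition $\Pi z=0$ applies to it. Its deviation from $\A_\gamma$ is governed by $w(\cdot+\varphi)-w$, which $\nabla\varphi$ does not control; only $\|\varphi\|_{L^\infty}^2\le\|\varphi\|_{L^2}\|\varphi_x\|_{L^2}\lesssim E_0^2(1+t)$ is available. The same terms spoil the modulation equation you would get by differentiating your constraint; this is exactly Remark~\ref{rem:forward}.

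Finally, ``the projection of $\eu^{t\El}$ onto the critical branch associated with $w'$'' is not well defined. $0$ is a non-semisimple triple eigenvalue of $\El(0)$, the translation mode is spread over three splitting branches, and their individual eigenprojections degenerate as $\xi\to0$. With the normalization of Proposition~\ref{prop:eigenfunctions} one only has $\|\xi\phi_j(\xi)\|_{L^2(0,\ell)}\le C$ for $j\in\{0,\pm1\}$. This is presumably the source of your factor $\xi^{-1}$, and it is not harmless without decay.

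The missing idea is to let coercivity at the cost of a derivative control the modulation itself, by keeping it inside the quadratic form. You state this coercivity correctly, including $\theta\|\partial_x\psi_v\|^2$, but never use that term.

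The paper does this with the inverse-modulated perturbation $\tilde v(x,t)=u(x+\psi(x,t),t)-w(x)$. A change of variables in the energy integral turns the conserved relative energy into $\frac12\langle\A_\gamma(\tilde v-w'\psi),\tilde v-w'\psi\rangle_{L^2(\R)}+\mathcal{O}\big((\|\tilde v\|_{H^2}+\|\psi_x\|_{H^1})^3\big)$. Here the operator is the \emph{periodic} $\A_\gamma$, and everything depends only on $\tilde v$ and derivatives of $\psi$.

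The modulation $\psi$ is defined by a Duhamel formula built from the orthogonal spectral projection $P_\gamma(\xi)$ of the self-adjoint $\A_\gamma(\xi)$. That eigenvalue is simple and analytic in $\xi$, with no singularity. This makes $\check\psi(\xi,t)$ constant in $x$ and supported in $(-\xi_1,\xi_1)$, and splits $\tilde v=\tilde v_1+\tilde v_2$ with $\mathcal{B}\tilde v_1(\xi)\in\ker P_\gamma(\xi)$. The estimates then go as follows:
\begin{itemize}
\item Proposition~\ref{prop:coercivity} bounds $\|\tilde v_1\|_{H^2}^2+\|\psi_x\|_{H^k}^2$ by $C(E_0^2+\|\tilde v_2\|_{H^2}^2+\eta^3)$.
\item Because of the factor $1-\chi$, $\tilde v_2$ is a Duhamel integral only over $[t-2,t]$, hence $\mathcal{O}(E_0+\eta^2)$ without any decay.
\item $\psi_t$ is bounded pointwise in time via~\eqref{e:int_psit}. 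There only $s_p(0)$ and short-window terms appear, and $\mathcal{N}(t)$ cancels.
\end{itemize}
The linear growth of $\|\varphi(t)\|_{L^2}$ is then just $\int_0^t\|\varphi_t\|_{L^2}$. Your derivation of~\eqref{e:orbital_KdV} from~\eqref{e:modulational_KdV} is fine.
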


Theorem~\ref{t:main_result} shows that the perturbed solution $u(x,t)$ to~\eqref{KdV} remains close to the \emph{modulated} periodic traveling wave $w(x - ct + \varphi(x,t))$, where the modulation function $\varphi(x,t)$ depends on both space and time. This stands in contrast to previous stability results for co-periodic and subharmonic perturbations~\cite{angulo2007nonlinear,johnson_nonlinear_2010,mckean1977stability,deconinck2010orbital}, where, for each $t \geq 0$, the perturbed solution is close to a single translate of the periodic wave, corresponding to a modulation that depends only on time. 

We emphasize that the spatial dependence of $\varphi(x,t)$ is intrinsic and not merely an artifact of our analysis. In fact, this phenomenon already appears in exact solutions of the KdV equation describing $N$ solitary waves propagating on a periodic background. Such solutions were constructed in~\cite{kurznetsov_1974_stability}, see also~\cite{hoefer2023kdv,Hu_2012_explicit}, via the inverse scattering transform. As observed in~\cite{kurznetsov_1974_stability,hoefer2023kdv,Hu_2012_explicit}, each solitary wave travels with its own characteristic mean velocity along the periodic background wave and induces a nontrivial phase shift in the background as it passes. By choosing the soliton amplitudes sufficiently small and arranging that the total accumulated phase shift equals $0$, one obtains a periodic wave subject to a localized perturbation, corresponding precisely to a solution $u(t)$ to~\eqref{KdV} considered in Theorem~\ref{t:main_result}. Although the accumulated phase shift is zero, the solution resembles different spatial translates of the periodic wave between successive solitary dislocations; see Figure~\ref{fig2}. Capturing these \emph{phase defects} naturally requires a modulation function $\varphi(x,t)$ that depends on both space and time. In particular, since at any fixed time the perturbed solution must not be uniformly close to a single spatial translate of the periodic wave, orbital stability can only hold locally uniformly; that is, the space $H^2(x_*-R,x_*+R)$ in estimate~\eqref{e:orbital_KdV} cannot be replaced by $H^2(\R)$.

\begin{figure}[t!]
\centering
\includegraphics[width=0.86\textwidth]{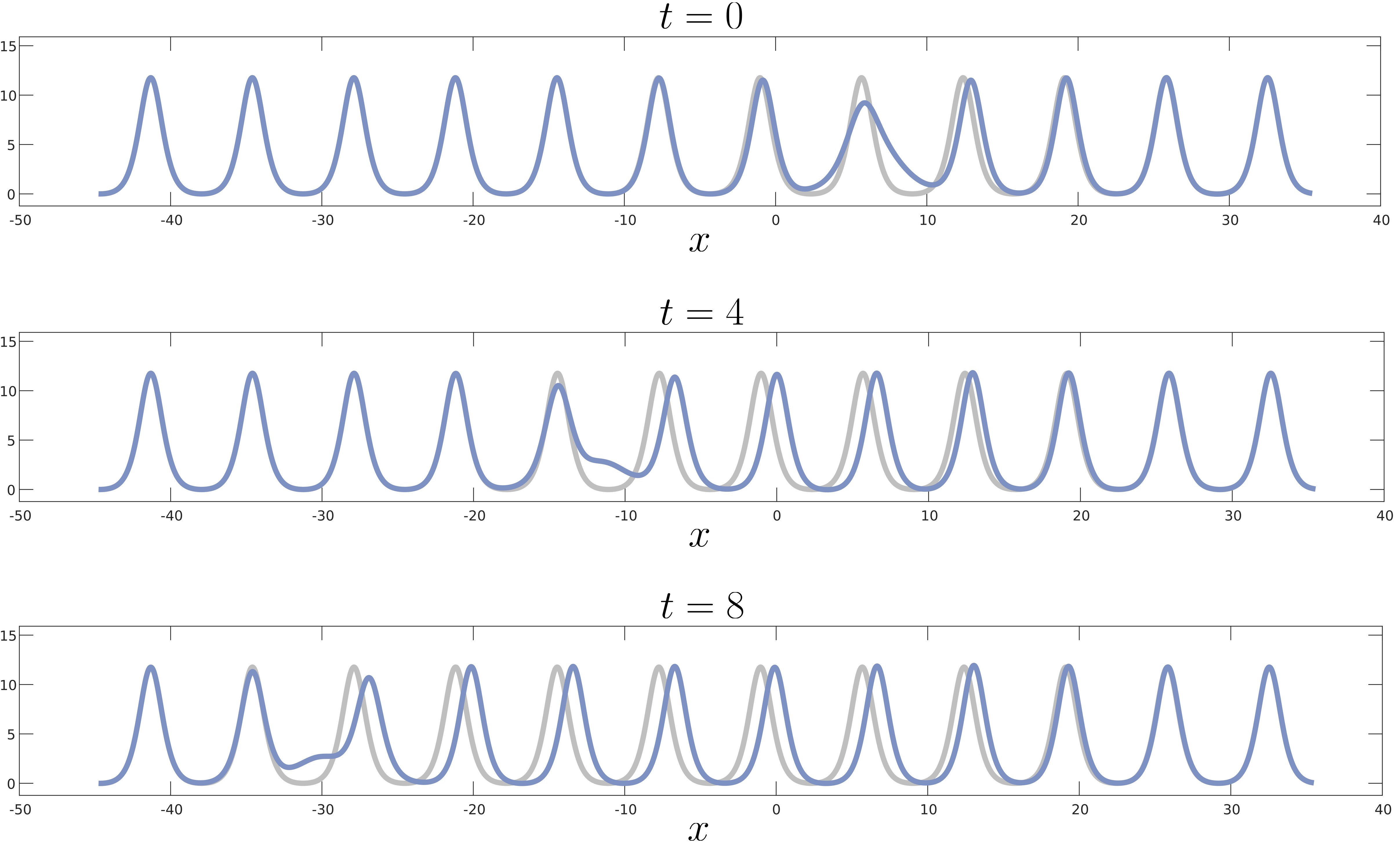}
\includegraphics[width=0.86\textwidth]{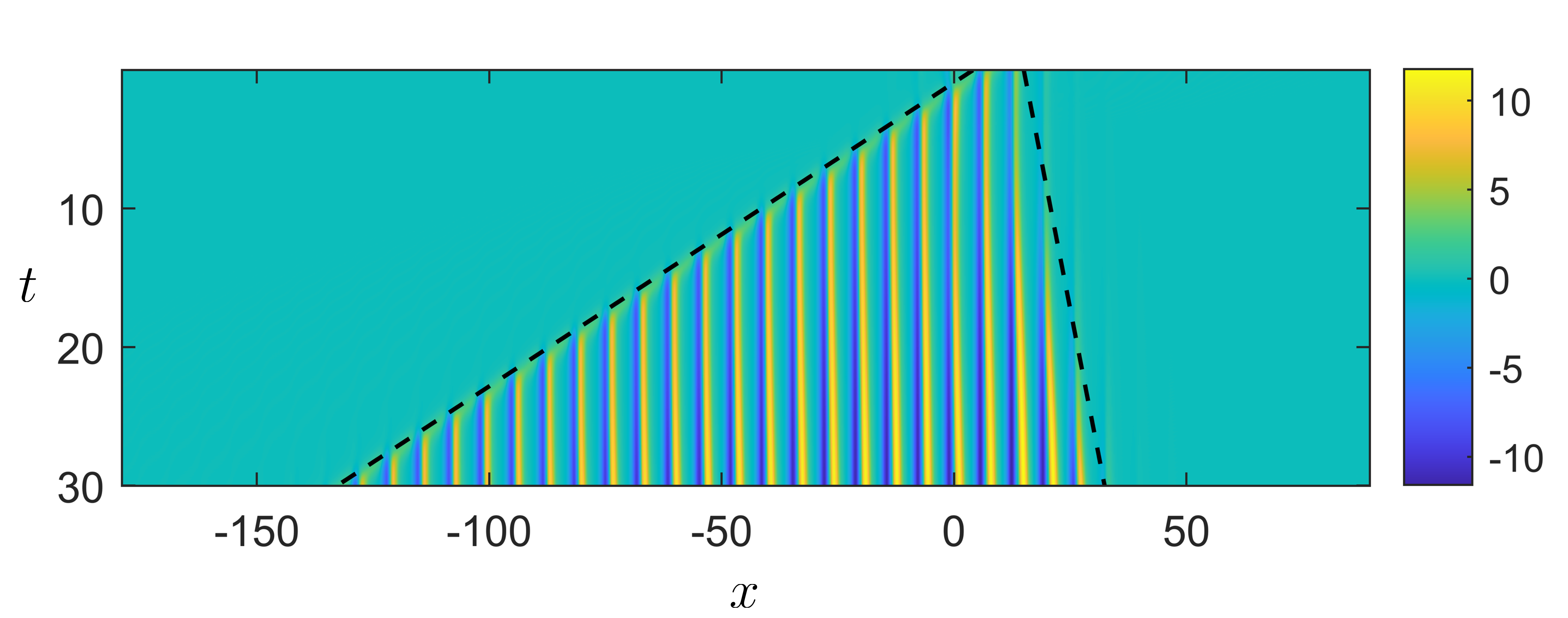}
\caption{\small Time integration of a periodic cnoidal wave subject to a localized perturbation. The initial condition is constructed by patching together two exact one-soliton solutions on a cnoidal background from~\cite{hoefer2023kdv}, chosen so that the net phase shift of the background is zero. Top: The solution $u(t)$  to~\eqref{cKdV} is shown in blue at $t=0$, $4$, and $8$; the unperturbed cnoidal wave $w(x) = 12 \Eps \, \mathrm{cn}^2(x-x_0,\Eps)$ with elliptic modulus $\Eps = 0.9801$ and phase off-set $x_0 = -1.029$ is shown in gray for reference. The wave speed is $c = 4(2\Eps - 1)$. The perturbation triggers a phase defect that travels outwards in both spatial directions with distinct velocities. Bottom: Space-time plot of the perturbation $v(t) = u(t) - w$, together with straight lines (dashed) fitted to the left and right interfaces of the phase defect.
} \label{fig2}
\end{figure}

Furthermore, as the solitary dislocations propagate with their own characteristic velocities, the distances between neighboring dislocations can grow linearly in time; see Figure~\ref{fig2}. As a consequence, the spatial profile of $\varphi(t)$ can develop linearly expanding plateau states, causing its $L^2(\R)$-norm to grow at rate $t^{1/2}$. Numerical simulations in~\cite{bukieda2025orbital} show similar dynamics for localized perturbations of plane waves in the complex Klein--Gordon equation: the phase exhibits expanding plateau states and its $L^2(\R)$-norm grows at rate $t^{1/2}$. Based on these analytical and numerical considerations, we conjecture that this behavior is generic across Hamiltonian models with symmetry. In particular, we believe that the linear growth bound on $\smash{\|\varphi(t)\|_{L^2(\R)}}$ in Theorem~\ref{t:main_result} is not sharp and can be improved to $\leq M E_0 (1+t)^{1/2}$. 

It was observed in~\cite{bona2004stability} that the higher-order conserved quantities associated with the KdV hierarchy may be used to control higher-order Sobolev norms of a perturbation in terms of lower-order ones. Adapting this idea to the present modulational framework, we extend Theorem~\ref{t:main_result} to obtain nonlinear stability in Sobolev spaces of arbitrarily high order. The precise statement is given in Corollary~\ref{c:main_result}.

\subsection{Challenges}

Standard stability arguments in Hamiltonian systems with symmetry, see~\cite{kapitula_spectral_2013,geyer2025stability} for an overview, characterize stable solutions as strict minimizers of a conserved energy subject to finitely many constraints. These approaches, building on the seminal works of Grillakis, Shatah, and Strauss~\cite{grillakis1987stability,grillakis1990stability}, have been successfully applied to establish orbital stability of periodic waves against co-periodic or subharmonic perturbations; see~\cite{geyer2025stability,benzonigavage2016coperiodic,Benzoni2013Stability} and references therein. However, they fail for localized perturbations of periodic waves, since the $L^2(\R)$-spectrum of the second variation of the energy about the wave is purely essential and touches the origin due to the system's symmetries. As a result, the second variation fails to be coercive on any finite-codimensional constraint space, a key requirement for the application of the framework of Grillakis, Shatah, and Strauss and related methods. This obstruction is reflected in subharmonic stability analyses~\cite{deconinck2010orbital,gallay2015orbitalI}: since the spectral gap around the zero eigenvalue of the second variation (as an operator on $L^2_\per(0,n\ell)$, where $n \in \N$ and $\ell > 0$ is the fundamental period of the underlying wave) vanishes as $n\to\infty$, the allowable size of perturbations shrinks to zero, precluding a direct passage to the limit $n \to \infty$ and hence any extension to localized perturbations.

While nonlinear stability of periodic waves under localized perturbations has remained largely open in Hamiltonian systems~\cite{Benzoni2013Stability,benzonigavage2016coperiodic}, significant progress has been made over the past decades in \emph{dissipative} systems such as reaction-diffusion models~\cite{johnson2011nonlinear,sandstede2012diffusive}, viscous conservation laws~\cite{johnson_nonlinear_2010}, and the Saint-Venant equations for viscous shallow water waves~\cite{johnson2011nonlinear}. Due to the translational symmetry of these systems, localized perturbations typically trigger a non-uniform phase mismatch along the periodic wave, which is only healed at a diffusive rate through the temporal dynamics. This is reflected by the spectrum of the linearization about the wave, which touches the origin. Thus, the linearization is diffusively spectrally stable at best. This obstruction has been overcome by the use of mode filters in Bloch-frequency domain~\cite{schneider_diffusive_1996,schneider1998nonlinear} combined with the modulational ansatz of~\cite{doelman_dynamics_2009,johnson_nonlinear_2010}, which enables the isolation of the critical diffusive phase dynamics at both the linear and nonlinear level. Because the remaining dynamics are damped by dissipation, the resulting decay rates are sufficient to close a nonlinear stability argument via iterative estimates on the Duhamel formulation.

A key obstacle to extending these techniques to the present Hamiltonian setting is the absence of damping. This is reflected by the purely imaginary spectrum of the linearization of~\eqref{KdV} about a cnoidal wave~\cite{bottman2009kdv}. While~\cite{rodrigues2018linear} shows that the associated semigroup can be decomposed into a slowly dispersing low-frequency component capturing the critical modulational dynamics and a high-frequency component exhibiting enhanced dispersive decay, these decay rates are insufficient to close a nonlinear stability argument following the aforementioned approach for dissipative systems.

The crucial observation is that the second variation of a conserved energy corresponds to a linear operator that is much more similar to the linearization of a dissipative system. In particular, the second variation of the conserved quantity $E_\gamma$, given by~\eqref{e:def_energy}, corresponds to a diffusively spectrally stable operator by Theorem~\ref{thm:diffusive_intro}. This suggests to combine variational arguments with the modulational ansatz of~\cite{doelman_dynamics_2009,johnson_nonlinear_2010} to close a nonlinear stability argument, which is the approach adopted in this work.

\subsection{Strategy of proof} \label{sec:strategy}

Suppose $u_{\text{tw}}(x,t) = w(x-ct)$ is a traveling-wave solution to the KdV equation~\eqref{KdV} with wave speed $c \in \R$ and periodic profile $w\colon \R \to \R$. Passing to the frame moving with speed $c$, the profile $w$ becomes a stationary solution to~\eqref{cKdV}. Take an initial perturbation $v_0 \in H^3(\R)$, and let $u(t)$ denote the solution to~\eqref{cKdV} with initial condition $u(0) = w + v_0$. To track the evolution of the perturbation, we define $v(t) = u(t) - w$.

We aim to close a variational nonlinear stability argument using the energy $E_\gamma$, given by~\eqref{e:def_energy}. Since $E_\gamma$ is conserved by the flow of~\eqref{cKdV} and $w$ is a critical point of $E_\gamma$, it is natural to consider the formal energy difference
\begin{align*}
\Lambda_\gamma(v(t)) = E_\gamma(w+v(t)) - E_\gamma(w)
\end{align*}
as a Lyapunov functional. Although both $E_\gamma(w+v(t))$ and $E_\gamma(w)$ are divergent improper integrals, the fact that $E_\gamma'(w) = 0$ ensures their \emph{relative energy} $\Lambda_\gamma(v(t))$ is nevertheless well defined and conserved after interchanging subtraction and limits; see Section~\ref{sec:energy} for details. However, a standard coercivity estimate using the expansion
\begin{align} \label{e:formal_energy_expansion_intro} 
\begin{split}
E_\gamma(w+v(t)) - E_\gamma(w) &= \frac12 \langle \A_\gamma v(t),v(t)\rangle_{L^2(\R)} + \mathcal{O}\big(\|v(t)\|_{H^2(\R)}^3\big)
\end{split}
\end{align}
fails, because the differential operator $\A_\gamma$ associated with the second variation $E_\gamma''(w)$ is not positive definite. 

Instead, we introduce the \emph{inverse-modulated perturbation}
\begin{align} \label{e:def_inv_mod}
\vt(x,t) = u(\cdot + \psi(\cdot,t),t) - w(x), 
\end{align}
where the modulation function $\psi(x,t)$ is chosen \emph{a-posteriori} so as to capture the neutral translational dynamics that obstruct positive-definiteness of $\A_\gamma$. Exploiting the diffusive spectral stability of $-\A_\gamma$, we recover coercivity, at the expense of a spatial derivative of $\psi$. To prove Theorem~\ref{t:main_result}, it then suffices to establish global-in-time smallness of $\vt(t)$ and $\nabla \psi(t)$. Indeed, performing the change of variables $y = x + \psi(x,t)$, which is invertible as long as $\|\psi_x(t)\|_{L^\infty(\R)} < 1$, in~\eqref{e:def_inv_mod} yields a decomposition of the form~\eqref{e:decomp}, where $z(t)$ and $\nabla \varphi(t)$ are controlled by $\vt(t)$ and $\nabla \psi(t)$. We refer to Remark~\ref{rem:forward} for an explanation of why we work with the inverse-modulated perturbation $\vt(t)$ rather than attempting to control the \emph{forward-modulated perturbation} $z(t)$ directly. 

The inverse-modulated perturbation $\vt(t)$ satisfies an equation of the form
\begin{align} \label{e:mod_pert_eq}
(\partial_t - \El)(\vt - w'\psi) = \mathcal{N}(\vt,\nabla \psi)
\end{align}
where $\El = \mathcal{J} A$ is the linearization of~\eqref{cKdV} about $w$ and the nonlinear residual $\mathcal{N}(\vt,\nabla \psi)$ depends only on spatial and temporal derivatives of $\psi$. Assuming that $\psi(t)$ vanishes identically at $t = 0$, the associated Duhamel formulation reads
\begin{align} \label{e:Duh_mod}
\vt(t) - w'\psi(t) = \eu^{t\El} v_0 + \int_0^t \eu^{(t-s)\El} \mathcal{N}(\vt(s),\nabla \psi(s)) \ds.
\end{align}
We rewrite the conserved quantity $\Lambda_\gamma(v(t)) = E_\gamma(u(t)) - E_\gamma(w)$ in terms of $\vt(t)$ and $\psi_x(t)$. This is achieved by making the change of variables $y = x + \psi(x,t)$ in the integral~\eqref{e:def_energy} defining $E_\gamma(u(t))$. Altogether, this yields the expansion
\begin{align} \label{e:expan_energy_mod}
\begin{split}
E_\gamma(u(t)) - E_\gamma(w) &= \frac12 \langle \A_\gamma (\vt(t) - w'\psi(t)), \vt(t) - w'\psi(t)\rangle_{L^2(\R)}\\
&\qquad + \mathcal{O}\left(\left(\|\vt(t)\|_{H^2(\R)} + \|\psi_x(t)\|_{H^2(\R)}\right)^3\right).
\end{split}
\end{align}
We note that the structure of the linear term in~\eqref{e:mod_pert_eq} and the bilinear term in~\eqref{e:expan_energy_mod} may be seen directly by inserting $u(t) = w+ v(t)$ into~\eqref{e:def_inv_mod}, which gives
\begin{align} \label{e:formal_expasion_mod_pert}
v(t) = \vt(t) - w' \psi(t) + \text{nonlinear terms in $\psi(t)$ and $v(t)$}.
\end{align}
 
Since $w'$ spans the kernel of the Bloch operator $\A_\gamma(0)$, we can use the Duhamel formulation~\eqref{e:Duh_mod} to define $\psi(t)$ so that the large-time linearized dynamics of $\vt(t)$ is confined to a subspace on which $\A_\gamma$ is coercive, while $\psi(t)$ vanishes identically at $t = 0$. More precisely, we decompose the right-hand side of~\eqref{e:Duh_mod} in Bloch space and choose $\psi(t)$ in such a way that $\vt(t) = \vt_1(t) + \vt_2(t)$, where $\vt_2(t)$ is a residual term that can be estimated directly from the Duhamel formula, while $\vt_1(t)$ and $\psi(t)$ satisfy the key estimate
\begin{align} \label{e:coercivity_key}
\langle \A_\gamma (\vt_1(t) - w'\psi(t)), \vt_1(t) - w'\psi(t)\rangle_{L^2(\R)} \geq \alpha \left(\|\vt_1(t)\|_{H^2(\R)}^2 + \|\psi_x(t)\|_{H^2(\R)}^2\right)
\end{align}
for some constant $\alpha > 0$.  

Estimate~\eqref{e:coercivity_key} is a direct consequence of the diffusive spectral stability of $-\A_\gamma$. It recovers coercivity at the cost of a derivative of the modulation function $\psi(t)$, cf.~identity~\eqref{e:coercivity_diffusion}. Nevertheless, since both the perturbation equation~\eqref{e:mod_pert_eq} and the energy expansion~\eqref{e:expan_energy_mod} depend only on derivatives of $\psi(t)$, this derivative loss does not prevent the nonlinear argument from closing. Indeed, combining the coercivity estimate~\eqref{e:coercivity_key} with Duhamel-based bounds on $\vt_2(t)$ and $\psi_t(t)$, we obtain global-in-time smallness of $\vt(t)$ and $\nabla \psi(t)$, which ultimately leads to the proof of Theorem~\ref{t:main_result}.

\begin{remark} \label{rem:forward}
The reason that we do not directly work with the forward-modulated perturbation $z(t)$, given by~\eqref{e:decomp}, in our nonlinear stability analysis is that the equation for $z(t)$ contains nonlinear terms depending on the difference $w(x+\varphi(x,t)) - w(x)$ between the modulated and unmodulated wave. These terms cannot be controlled through the $L^2(\R)$-norm of the space-time gradient $\nabla \varphi(t)$. In contrast, since $\A_\gamma w' = 0$, the equation~\eqref{e:def_inv_mod} for the inverse-modulated perturbation $\vt(t)$ only depends on spatial and temporal derivatives of the modulation $\psi(t)$. We refer to~\cite{zumbrun_2024_forward} for further details on forward and inverse modulation of periodic waves.
\end{remark}

\begin{remark}
The central ingredient in proving Theorem~\ref{t:main_result} is a conserved quantity $E_\gamma$, which possesses $w$ as a critical point and whose second variation corresponds to a diffusively spectrally stable operator. Once this key ingredient has been established, see Theorem~\ref{thm:diffusive_intro} and Remark~\ref{rem:diff_energies_in_other_systems}, neither the explicit form~\eqref{e:cnoidal_form} of the periodic traveling waves nor the specific structure or complete integrability of the KdV equation play any role in our nonlinear stability analysis; cf.~Section~\ref{sec:intro_principle}.
\end{remark}

\subsection{Outline of paper}

Section~\ref{sec:coercivity} is devoted to establishing a general coercivity estimate under suitable diffusive spectral assumptions. In~\S\ref{sec:nonlinear_scheme}, we develop the nonlinear tracking scheme, introduce the modulated perturbation variables, and modulate the energy. The nonlinear stability argument is carried out in~\S\ref{sec:nonlinear_stab_analysis}, where we prove our main result and derive its extension to Sobolev spaces of arbitrarily high order. We conclude in~\S\ref{sec:discussion} with a discussion of possible extensions of our approach and their relevance to several open problems.

Appendix~\ref{appendixFB} recalls several properties of the Floquet--Bloch transform used throughout the analysis. In Appendix~\ref{appendixPhPA}, we collect basic properties of the profile equation associated with the periodic traveling wave. Appendix~\ref{appendixSpectral} contains the spectral analysis of the second variation of the energy about the periodic wave. Finally, a number of technical lemmas are deferred to Appendix~\ref{appendixtech}.

\section{Coercivity estimate} \label{sec:coercivity}

In this section, we establish a coercivity estimate for general self-adjoint periodic differential operators $\A$ under the assumption that $-\A$ is diffusively spectrally stable. As outlined in~\S\ref{sec:strategy}, this coercivity estimate, which incurs at the cost of a derivative, serves as the key ingredient in our variational stability argument. In this section and throughout this paper, we denote by $B(X)$ the Banach space of bounded linear operators on a Banach space $X$.

Let $k,m \in \N$ and $\ell > 0$. Consider a self-adjoint differential operator $\A \colon H^{2k}(\R,\R^m) \to L^2(\R,\R^m)$ of the form
\begin{align*}
\A z = \sum_{j = 0}^{2k} a_j \, \partial_x^j z
\end{align*}
where the coefficients $a_j \colon \R \to \R^{m \times m}$ are $k$-times continuously differentiable and $\ell$-periodic. We assume that $\A$ is \emph{uniformly elliptic}, in the sense that $a_{2k}(x)$ is symmetric and there exists a constant $\theta_1 > 0$ such that
\begin{align} \label{e:ellipticity} 
(-1)^k \langle a_{2k}(x) z,z\rangle \geq \theta_1 |z|^2
\end{align}
for all $x \in \R$ and $z \in \R^m$. The associated Bloch operators $\A(\xi) \colon H^{2k}_\per(0,\ell) \to L^2_\per(0,\ell)$ for $\xi \in \C$ are defined by~\eqref{e:def_Bloch}.

Before presenting the coercivity estimate, we record two immediate consequences of the diffusive spectral stability and uniform ellipticity of $\A$, respectively. The first concerns the low-frequency spectrum of $\A$, which, by~\eqref{e:spectral_rel}, is parameterized by the eigenvalue $\lambda_c(\xi)$ of smallest real part of $\A(\xi)$ for $|\xi| \ll 1$. This eigenvalue is obtained by continuing the simple eigenvalue $0$ of $\A(0)$ in $\xi$ via standard analytic perturbation theory~\cite{kato_perturbation_2013}.

\begin{lemma}[Low-frequency spectrum] \label{lem:low_freq}
Suppose that $0$ is the smallest eigenvalue of $\A(0)$. If $0$ is simple, then there exist an open neighborhood $U \subset \C$, a constant $\theta_2 \in (0,1)$, and analytic functions $\lambda_c \colon U \to \C$ and $P \colon U \to B(L^2_\per(0,\ell))$ such that $\lambda_c(\xi)$ is the eigenvalue of smallest real part of $\A(\xi)$, $P(\xi)$ is the associated spectral projection of rank $1$, and it holds
\begin{align*}
\inf \textnormal{Re} \left(\sigma_{L^2_\per(0,\ell)}(\A(\xi)) \setminus \{\lambda_c(\xi)\}\right) \geq \theta_2
\end{align*}
for all $\xi \in U$.
\end{lemma}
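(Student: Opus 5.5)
The plan is to apply analytic perturbation theory (Kato) to the holomorphic family $\xi \mapsto \A(\xi)$ around $\xi = 0$.

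\emph{Step 1: the family is holomorphic of type (A).} Expanding the conjugation in~\eqref{e:def_Bloch} gives
\begin{align*}
\A(\xi) = \sum_{j=0}^{2k} a_j (\partial_x + \iu \xi)^j .
\end{align*}
This is a polynomial in $\xi$ with coefficients that are differential operators of order at most $2k$. All $\A(\xi)$ share the domain $H^{2k}_\per(0,\ell)$. The lower-order perturbation $\A(\xi) - \A(0)$ is relatively bounded with respect to $\A(0)$, with relative bound that can be made arbitrarily small. This uses uniform ellipticity~\eqref{e:ellipticity} and the resulting elliptic estimate $\|z\|_{H^{2k}} \lesssim \|\A(0) z\|_{L^2} + \|z\|_{L^2}$. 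Hence $\{\A(\xi)\}$ is a holomorphic family of type (A) on $\C$. Since $\A(0)$ is self-adjoint with compact resolvent, its spectrum is real and discrete.

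\emph{Step 2: isolate the zero eigenvalue.} Because $0$ is the smallest eigenvalue of $\A(0)$ and is simple, there is a spectral gap: $\sigma(\A(0)) \setminus \{0\} \subset [2\theta_2', \infty)$ for some $\theta_2' > 0$. Fix $\theta_2 \in (0, \min\{1, \theta_2'\})$. Let $\Gamma$ be a small circle around $0$ of radius $r < \theta_2/2$.

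\emph{Step 3: resolvent bounds on a half-plane.} The aim is to show that for $|\xi|$ small, the resolvent set of $\A(\xi)$ contains both $\Gamma$ and the region
\begin{align*}
\Omega = \{\lambda : \textnormal{Re}\,\lambda < \theta_2\} \setminus \{|\lambda| < r\}.
\end{align*}
This is the main obstacle, because $\Omega$ is unbounded, so uniformity in $\lambda$ is needed.
\begin{itemize}
\item For $\lambda$ in a bounded part of $\Omega$: by self-adjointness, $\|(\A(0)-\lambda)^{-1}\| = \textnormal{dist}(\lambda,\sigma(\A(0)))^{-1}$, which is uniformly bounded there.
\item For $|\lambda|$ large with $\textnormal{Re}\,\lambda < \theta_2$: use a G\aa rding-type bound. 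This bound says that $\A(0) + C$ is positive and that $\|\A(0)(\A(0)-\lambda)^{-1}\|$ stays uniformly bounded for such $\lambda$, by the spectral theorem, since $\sigma(\A(0)) \subset [0,\infty)$.
\end{itemize}
Combining these with the relative bound from Step 1, $\|(\A(\xi)-\A(0))(\A(0)-\lambda)^{-1}\| \le C|\xi|$ uniformly for $\lambda \in \Omega \cup \Gamma$. A Neumann series then shows that $\A(\xi) - \lambda$ is invertible there once $|\xi| < \rho$ for a suitable $\rho$, and that the resolvent is analytic in $\xi$.

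\emph{Step 4: define the projection and eigenvalue.} Take $U = \{|\xi| < \rho\}$ and set
\begin{align*}
P(\xi) = -\frac{1}{2\pi \iu} \oint_\Gamma (\A(\xi)-\lambda)^{-1} \, \textnormal{d}\lambda .
\end{align*}
Then $P(\xi)$ is analytic in $\xi$. Its rank is continuous in $\xi$ and hence constant, equal to $\textnormal{rank}\, P(0) = 1$.

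So $\A(\xi)$ has exactly one eigenvalue inside $\Gamma$. It is given by
\begin{align*}
\lambda_c(\xi) = \textnormal{tr}\big(\A(\xi)P(\xi)\big) = \langle \A(\xi) P(\xi) \phi, \phi^*\rangle ,
\end{align*}
computed on the rank-one range, and is therefore analytic in $\xi$.

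All other eigenvalues of $\A(\xi)$ lie outside $\Gamma \cup \Omega$, so they have real part at least $\theta_2$. Since $\textnormal{Re}\,\lambda_c(\xi) < r < \theta_2$, $\lambda_c(\xi)$ is the eigenvalue of smallest real part. This gives the claimed spectral bound.
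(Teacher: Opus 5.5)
Your proposal is correct and follows the route the paper indicates: the simple, isolated eigenvalue $0$ of $\A(0)$ is continued by Kato's analytic perturbation theory via a Riesz projection, and the paper gives no further details. Your uniform bound $\|(\A(\xi)-\A(0))(\A(0)-\lambda)^{-1}\| \le C|\xi|$ on the unbounded region $\Omega$ comes from the spectral theorem for $\A(0)$ combined with the elliptic estimate, and it is exactly what justifies the ``smallest real part'' and spectral-gap claims for complex $\xi$; the positivity bound you mention alongside it is not needed.
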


Next, we state a standard result from elliptic regularity theory, which shows that $L^2$-coercivity of the uniformly elliptic operator $\A$ upgrades to $H^k$-coercivity. This result is crucial for controlling regularity in our nonlinear stability argument.

\begin{lemma}[Uniform elliptic regularity estimate] \label{lem:elliptic}
Let $K \subset \C$ be compact. Then, there exists a constant $\theta_3 > 0$ such that for all $\xi \in K$, each $\zeta_\xi \in (0,1)$, and each subspace $V_\xi \subset H^{2k}_\per(0,\ell)$ the implication
\begin{align*}
&\forall \, z \in V_\xi : \langle \A(\xi) z, z\rangle_{L^2(0,\ell)} \geq \zeta_\xi \|z\|_{L^2(0,\ell)}^2\\ 
&\qquad \Longrightarrow \qquad \forall \, z \in V_\xi : 
\langle \A(\xi) z, z\rangle_{L^2(0,\ell)} \geq \theta_3 \zeta_\xi \|z\|_{H^k(0,\ell)}^2
\end{align*}
holds.
\end{lemma}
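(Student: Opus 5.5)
The plan is to establish a uniform Gårding-type inequality for the family $\A(\xi)$, $\xi \in K$, and then interpolate it against the assumed $L^2$-coercivity on $V_\xi$.

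First we compute the structure of the Bloch operator. Conjugating by $\eu^{\iu\xi x}$ gives
\begin{align*}
\A(\xi) = \sum_{j=0}^{2k} a_j (\partial_x + \iu \xi)^j .
\end{align*}
This is again a differential operator of order $2k$ with the same leading coefficient $a_{2k}$. Its lower-order coefficients are polynomials in $\xi$ multiplied by the $a_j$, so they are uniformly bounded for $\xi \in K$.

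Next, for $z \in H^{2k}_\per(0,\ell)$, we integrate by parts $k$ times in the top-order term and use periodicity to remove boundary terms. Since $a_{2k}\in C^k$, this yields
\begin{align*}
\langle a_{2k}\partial_x^{2k} z, z\rangle = (-1)^k \langle a_{2k}\partial_x^k z, \partial_x^k z\rangle + \text{terms with at most } 2k-1 \text{ derivatives distributed as } (\le k, \le k-1).
\end{align*}
The lower-order terms $a_j(\partial_x+\iu\xi)^j$ with $j<2k$ are handled the same way. We move derivatives onto $\bar z$ until each pairing involves at most $k$ derivatives on one factor and at most $k-1$ on the other; this uses $a_j \in C^k$, and if some $a_j$ is only $C^k$ we can also distribute derivatives using $j \le 2k-1$. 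All these error terms are bounded by $C\|z\|_{H^k}\|z\|_{H^{k-1}}$, with $C$ uniform over $K$. By ellipticity~\eqref{e:ellipticity}, the top term is at least $\theta_1\|\partial_x^k z\|^2_{L^2}$. Combining this with the interpolation inequality $\|z\|_{H^{k-1}} \le \eps\|z\|_{H^k} + C_\eps\|z\|_{L^2}$ and Young's inequality gives the Gårding inequality
\begin{align*}
\textnormal{Re}\,\langle \A(\xi) z,z\rangle \ge \tfrac{\theta_1}{2}\|z\|_{H^k}^2 - C_K\|z\|_{L^2}^2,
\end{align*}
with $C_K$ independent of $\xi\in K$. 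Under the hypothesis the form $\langle \A(\xi)z,z\rangle$ is real, since it is bounded below by a real number; in any case only the real part is needed.

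Finally, fix $z\in V_\xi$, write $Q=\langle\A(\xi)z,z\rangle$, and take $\mu\in(0,1]$. Then
\begin{align*}
Q = \mu Q + (1-\mu)Q \ge \mu\big(\tfrac{\theta_1}{2}\|z\|_{H^k}^2 - C_K\|z\|_{L^2}^2\big) + (1-\mu)\zeta_\xi\|z\|_{L^2}^2 .
\end{align*}
We choose $\mu = \zeta_\xi/(2(C_K+1))$, which lies in $(0,1)$ because $\zeta_\xi<1$. With this choice $\mu C_K \le \zeta_\xi/2 \le (1-\mu)\zeta_\xi$, since $\mu\le 1/2$, so the $L^2$ terms are nonnegative. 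This gives $Q\ge \frac{\theta_1}{4(C_K+1)}\zeta_\xi\|z\|^2_{H^k}$, which is the claim with $\theta_3 = \theta_1/(4(C_K+1))$.

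The main obstacle is the bookkeeping in the integration by parts. We need $C_K$ to be uniform in $\xi\in K$ and to use only the assumed $C^k$ regularity of the coefficients. Compactness of $K$ handles the $\xi$-dependence, and the derivative distribution described above keeps every coefficient differentiated at most $k$ times.
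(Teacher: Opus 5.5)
Your proposal is correct and follows essentially the same route as the paper. It first proves a Gårding inequality $\langle \A(\xi)z,z\rangle \ge \frac12\theta_1\|z\|_{H^k(0,\ell)}^2 - C\|z\|_{L^2(0,\ell)}^2$ that holds uniformly for $\xi \in K$. It then takes a convex combination of this inequality with the assumed $L^2$-coercivity, using a weight proportional to $\zeta_\xi$.

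The differences are cosmetic. The paper uses the weight $\zeta_\xi/(C+\zeta_\xi)$, which makes the $L^2$-terms cancel exactly and gives $\theta_3=\theta_1/(2(C+1))$. Your weight $\zeta_\xi/(2(C_K+1))$ loses a harmless factor of $2$. You also spell out the integration-by-parts bookkeeping and the reality of the quadratic form, both of which the paper leaves implicit.
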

\begin{proof}
Since $K$ is compact, there exists a constant $C_0 > 0$ such that $|\xi| \leq C_0$ for all $\xi \in K$. Moreover, by uniform ellipticity of $\A$, there exists $\theta_1 > 0$ such that~\eqref{e:ellipticity} holds for all $x \in \R$ and $z \in \R^m$. Hence, integrating by parts, using the Cauchy--Schwarz and Young's inequalities, and interpolating, we obtain a constant $C > 0$ such that
\begin{align} \label{e:elliptic2}
\langle \A(\xi) z, z\rangle_{L^2(0,\ell)} \geq \frac12 \theta_1 \|z\|_{H^k(0,\ell)}^2 - C \|z\|_{L^2(0,\ell)}^2
\end{align}
for all $\xi \in K$ and $z \in H^{2k}_\per(0,\ell)$. 

Take $\xi \in K$, $\zeta_\xi > 0$, and a subspace $V_\xi \subset H^{2k}_\per(0,\ell)$ such that
\begin{align*}
\langle \A(\xi) z, z\rangle_{L^2(0,\ell)} \geq \zeta_\xi \|z\|_{L^2(0,\ell)}^2
\end{align*}
holds for all $z \in V_\xi$. Set $\alpha_\xi \coloneqq \zeta_\xi/(C+\zeta_\xi) \in (0,1)$. Then,~\eqref{e:elliptic2} implies
\begin{align*}
\langle \A(\xi) z, z\rangle_{L^2(0,\ell)} &\geq \alpha_\xi \left(\frac12 \theta_1 \|z\|_{H^k(0,\ell)}^2 - C \|z\|_{L^2(0,\ell)}^2\right) + (1-\alpha_\xi) \zeta_\xi \|z\|_{L^2(0,\ell)}^2\\ 
&\geq \frac{\theta_1}{2(C + 1)}\, \zeta_\xi \|z\|_{H^k(0,\ell)}^2
\end{align*}
for all $z \in V_\xi$. Thus, setting $\theta_3 = \theta_1/(2(C+1))$ yields the result.    
\end{proof}

Finally, we establish the desired coercivity estimate. The proof relies on the Floquet--Bloch version of Parseval's identity, which decomposes the quadratic form $\langle \A z, z\rangle_{L^2(\R)}$ into contributions from low and high Bloch frequencies. We refer to Appendix~\ref{appendixFB} for relevant background on the Floquet--Bloch transform $\mathcal{B} u = \check{u}$ of a function $u \in L^2(\R)$. 

\begin{proposition}[Coercivity at the cost of a derivative] \label{prop:coercivity}
Suppose that $-\A$ is diffusively spectrally stable, i.e., $0$ is a simple eigenvalue of $\A(0)$ and there exists $\theta_0 > 0$ such that~\eqref{e:coercivity_diffusive} holds. Let $w_0 \in \ker(\A(0)) \setminus \{0\}$ be a corresponding eigenfunction. Let $P(\xi)$ be the spectral projection of the Bloch operator $\A(\xi)$, established in Lemma~\ref{lem:low_freq}. Fix $j \in \N$ with $j \geq 2k-1$.

Then, for all $\xi_1 \in (0,\frac{\pi}{\ell})$ sufficiently small, there exists a constant $\alpha > 0$ such that for all $v \in H^{2k}(\R,\R^m)$ and $\psi \in H^{j+1}(\R,\R)$ satisfying
\begin{itemize}
    \item[(i)] $\check{v}(\xi) \in \ker(P(\xi))$ for all $\xi \in (-\xi_1,\xi_1)$;
    \item[(ii)] $\check{\psi}(\xi)$ is a constant function for each $\xi \in [-\frac{\pi}{\ell},\frac{\pi}{\ell})$;
    \item[(iii)] $\check{\psi}(\xi) \equiv 0$ for all $\xi \in [-\frac{\pi}{\ell},\frac{\pi}{\ell}) \setminus (-\xi_1,\xi_1)$,
\end{itemize}
we have
\begin{align*}
\left\langle \A (v - w_0\psi), v-w_0\psi\right\rangle_{L^2(\R)} \geq \alpha \left(\|v\|_{H^k(\R)}^2 + \|\psi'\|_{H^j(\R)}^2\right).
\end{align*}
\end{proposition}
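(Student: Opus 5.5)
Our plan is to decompose the quadratic form in Bloch space via the Floquet--Bloch Parseval identity,
\begin{align*}
\langle \A z,z\rangle_{L^2(\R)} = \frac{\ell}{2\pi}\int_{-\pi/\ell}^{\pi/\ell} \langle \A(\xi)\check z(\xi),\check z(\xi)\rangle_{L^2(0,\ell)}\dxi ,
\end{align*}
applied to $z = v - w_0\psi$. Since $\check\psi(\xi)$ is constant in $x$, we have $\check{(w_0\psi)}(\xi) = \check\psi(\xi)\, w_0$ (with $w_0$ viewed as periodic), and this vanishes for $|\xi|\geq \xi_1$ by (iii). Hence for $|\xi|\geq\xi_1$ the integrand is $\langle\A(\xi)\check v,\check v\rangle$. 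There, (i) of Definition~\ref{def:diffusive_spec} gives $\langle \A(\xi)\check v,\check v\rangle \geq \theta_0\xi_1^2\|\check v\|^2$, and Lemma~\ref{lem:elliptic} upgrades this to $\theta_3\theta_0\xi_1^2\|\check v(\xi)\|_{H^k}^2$.

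For $|\xi|<\xi_1$ we work with $\check z(\xi) = \check v(\xi) - \check\psi(\xi) w_0$, where $\check v(\xi)\in\ker P(\xi)$.

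\textbf{Step 1: spectral decomposition at small $\xi$.} Since $\A(\xi)$ is self-adjoint for real $\xi$, $P(\xi)$ is an orthogonal projection onto $\textnormal{Sp}\{w_c(\xi)\}$, where $w_c(\xi)$ is an analytic eigenfunction with $w_c(0)=w_0$. Write $w_0 = w_c(\xi) + r(\xi)$ with $r(\xi) = w_0 - w_c(\xi) = \mathcal{O}(|\xi|)$. Then
\begin{align*}
\check z = \big(\check v - \check\psi(I-P(\xi))r\big) - \check\psi\big(w_c(\xi) + P(\xi)r\big).
\end{align*}
The first bracket lies in $\ker P(\xi)$ and the second in $\ran P(\xi)$. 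The two pieces are $\A(\xi)$-orthogonal, so the quadratic form splits. On $\ran P$ it equals $\lambda_c(\xi)|\check\psi|^2\|w_c+Pr\|^2 \geq c\,\xi^2|\check\psi|^2$ for small $\xi_1$: this uses (i) of the definition, because $\lambda_c(\xi) = \inf\sigma(\A(\xi))\ge\theta_0\xi^2$ by Lemma~\ref{lem:low_freq}. On $\ker P$, Lemma~\ref{lem:low_freq} gives the gap $\theta_2$, and Lemma~\ref{lem:elliptic} then gives a lower bound $\theta_3\theta_2\|\check v - \check\psi(I-P)r\|_{H^k}^2$.

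\textbf{Step 2: absorbing the cross term.} Using $\|a-b\|^2\geq \frac12\|a\|^2-\|b\|^2$ and $\|(I-P)r\|_{H^k}\leq C|\xi|$, the $\ker P$ piece is bounded below by $\theta_3\theta_2\big(\tfrac12\|\check v\|_{H^k}^2 - C\xi^2|\check\psi|^2\big)$. The loss $C\xi^2|\check\psi|^2$ cannot simply be dominated by the $c\,\xi^2|\check\psi|^2$ gain, since the constants need not compare favourably. Handling this is the main obstacle.

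We would resolve it by not splitting crudely. Set $a=\check v$ and $b=\check\psi (I-P)r$, so that $a\perp\ran P$ and $\|b\|\le C|\xi||\check\psi|$. Then
\begin{align*}
\langle\A(\xi)(a-b),a-b\rangle \geq \theta_2\|a-b\|^2 \geq \theta_2(1-\eta)\|a\|^2 - \theta_2(\eta^{-1}-1)\|b\|^2 ,
\end{align*}
but this again loses $\xi^2|\check\psi|^2$. The cleaner route is to take a convex combination: the full form is at least $\epsilon$ times the $\ker P$ piece plus the full $\ran P$ piece. Concretely, the full form is $\geq \epsilon\,\theta_3\theta_2\big(\tfrac12\|a\|_{H^k}^2 - C\xi^2|\check\psi|^2\big) + c\,\xi^2|\check\psi|^2$, which is valid since the $\ker P$ piece is nonnegative. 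Choosing $\epsilon$ small so that $\epsilon\,\theta_3\theta_2 C\le c/2$ yields
\begin{align*}
\gtrsim \|\check v(\xi)\|_{H^k}^2 + \xi^2|\check\psi(\xi)|^2 .
\end{align*}

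\textbf{Step 3: reassembly.} Integrating over $\xi$ and combining with the high-frequency bound gives control of $\int\|\check v(\xi)\|_{H^k}^2\dxi \sim \|v\|_{H^k(\R)}^2$, using the equivalence of norms for the Bloch transform from Appendix~\ref{appendixFB}. It also controls $\int_{|\xi|<\xi_1}\xi^2|\check\psi|^2\dxi$. Since $\psi$ has Bloch support in $|\xi|<\xi_1$ and $\check\psi$ is constant in $x$, $\psi$ is band-limited in Fourier: $\hat\psi$ is supported in $|k|<\xi_1$. Hence $\|\psi'\|_{H^j}^2\le (1+\xi_1^2)^j\|\psi'\|_{L^2}^2 \sim \int\xi^2|\check\psi|^2\dxi$, which completes the estimate.
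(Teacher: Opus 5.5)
Your proposal is correct and is essentially the paper's proof. It uses the same Bloch--Parseval splitting into three parts: a high-frequency part (only $\check v$, handled by \eqref{e:coercivity_diffusive} and Lemma~\ref{lem:elliptic}); a critical part on $\ran P(\xi)$ equal to $\lambda_c(\xi)|\check\psi(\xi)|^2\|P(\xi)w_0\|_{L^2(0,\ell)}^2$ (your $w_c(\xi)+P(\xi)r(\xi)$ is just $P(\xi)w_0$); and a residual part on $\ker P(\xi)$ controlled by the gap $\theta_2$ and elliptic regularity. The $\xi^2|\check\psi(\xi)|^2$ cross-term loss is absorbed by down-weighting the $\ker P(\xi)$ piece exactly as you do: the factor $\beta\|w_0\|_{L^2(0,\ell)}^2/(4C^2\|w_0\|_{H^k(0,\ell)}^2)$ in \eqref{e:coer33} is your $\epsilon$. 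The concluding band-limitation argument for $\|\psi'\|_{H^j(\R)}$ also matches. The only slip is your Parseval normalization ($\ell/(2\pi)$ rather than $1/(2\pi\ell)$ with the paper's convention), which is immaterial.
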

\begin{proof}
First, we recall from Lemma~\ref{lem:low_freq} that $P \colon U \to B(L^2_\per(0,\ell))$ is analytic. In particular, there exists a constant $C \geq 1$ such that
\begin{align} \label{e:coer2}
\|P(\xi) - P(0)\|_{B(L^2_\per(0,\ell))}, \|P(\xi) - P(0)\|_{B(H^k_\per(0,\ell))} \leq C|\xi| 
\end{align} 
for all $\xi \in U$, where we used that the spectral projection $P(\xi)$ commutes with $\A(\xi)$ and the $H^{k}_\per(0,\ell)$-norm is equivalent to the graph norm of $\A(\xi)$. Moreover, since $\A(\xi)$ is self-adjoint, $P(\xi)$ is an orthogonal projection for all $\xi \in U \cap \R$.

Let $\xi_1 > 0$ be so small that $[-\xi_1,\xi_1] \subset U$ and $\xi_1 \leq \min\{1/(4C),\theta_0^{-1/2}\}$. Combining Lemma~\ref{lem:elliptic} once with the spectral bound~\eqref{e:coercivity_diffusive} and once with Lemma~\ref{lem:low_freq}, we find a constant $\theta_3 > 0$ such that the inequality
\begin{align} \label{e:coer00}
\langle \A(\xi)z,z\rangle_{L^2(0,\ell)} \geq \theta_0 \theta_3 \xi_1^2 \|z\|_{H^k(0,\ell)}^2
\end{align}
holds for all $z \in H_\per^{2k}(0,\ell)$ and $\xi \in [-\frac{\pi}{\ell},\frac{\pi}{\ell}] \setminus (-\xi_1,\xi_1)$, as well as the estimate
\begin{align} \label{e:coer01}
\langle \A(\xi)z,z\rangle_{L^2(0,\ell)} \geq \theta_2 \theta_3 \|z\|_{H^k(0,\ell)}^2
\end{align}
for all $z \in \ker(P(\xi))$ and $\xi \in [-\xi_1,\xi_1]$.

Take $v \in H^{2k}(\R,\R^m)$ and $\psi \in H^{j+1}(\R,\R)$ satisfying conditions (i)-(iii) and set $z = v - w_0\psi$. Using Parseval's identity together with the other basic properties of the Floquet--Bloch transform (see Appendix~\ref{appendixFB}), we decompose
\begin{align*}
2\pi \ell \, \langle \A z,z\rangle_{L^2(\R)} &= \int_{-\frac{\pi}{\ell}}^{\frac{\pi}{\ell}} \langle \A(\xi)\check{z}(\xi),\check{z}(\xi)\rangle_{L^2(0,\ell)} \dxi = I_1 + I_2 + I_3
\end{align*}
into a critical low-frequency component
\begin{align*}
I_1 = \int_{-\xi_1}^{\xi_1} \lambda_c(\xi) \|P(\xi) w_0\|_{L^2(0,\ell)}^2 \, |\check{\psi}(\xi)|^2 \dxi, 
\end{align*}
a residual low-frequency component
\begin{align*}
I_2 = \int_{-\xi_1}^{\xi_1} \langle \A(\xi)(I-P(\xi))\check{z}(\xi),(I-P(\xi))\check{z}(\xi)\rangle_{L^2(0,\ell)} \dxi,
\end{align*}
and a high-frequency component
\begin{align*} 
I_3 = \int_{[-\frac{\pi}{\ell},\frac{\pi}{\ell}] \setminus (-\xi_1,\xi_1)} \langle \A(\xi)\check{v}(\xi),\check{v}(\xi)\rangle_{L^2(0,\ell)} \dxi.
\end{align*}
Setting $\beta = \min\{\theta_0,\theta_2\theta_3\} > 0$, applying~\eqref{e:coercivity_diffusive} and~\eqref{e:coer01}, and using $\|w_0\|_{H^k(0,\ell)} \geq \|w_0\|_{L^2(0,\ell)}$ and $C \geq 1$, we readily obtain
\begin{align} \label{e:coer3}
I_1 \geq \beta \int_{-\xi_1}^{\xi_1} \xi^2 \|P(\xi) w_0\|_{L^2(0,\ell)}^2 \, |\check{\psi}(\xi)|^2 \dxi
\end{align}
and
\begin{align} \label{e:coer33}
I_2 \geq \frac{\beta \|w_0\|_{L^2(0,\ell)}^2}{4C^2\, \|w_0\|_{H^k(0,\ell)}^2} \int_{-\xi_1}^{\xi_1} \|(I-P(\xi))\check{z}(\xi)\|_{H^k(0,\ell)}^2 \dxi.
\end{align}
Using~\eqref{e:coer2}, $\xi_1 \leq 1/(4C)$, and $w_0 \in  \ker(\A(0)) = \ran(P(0))$, we establish
\begin{align} \label{e:coer4}
\|P(\xi)w_0\|_{L^2(0,\ell)} &\geq \left(1 - \|(P(\xi) - P(0))\|_{B(L^2_\per(0,\ell))} \right)\|w_0\|_{L^2(0,\ell)} \geq \frac34 \|w_0\|_{L^2(0,\ell)}
\end{align}
for $\xi \in (-\xi_1,\xi_1)$. On the other hand, applying the Cauchy--Schwarz and Young inequalities and using~\eqref{e:coer2}, $w_0 \in \ran(P(0))$, $\check{v}(\xi) \in \ker(P(\xi))$, and the fact that $\check{\psi}(\xi)$ is a constant function, we infer
\begin{align} \label{e:coer5}
\begin{split}
\|(I-P(\xi))\check{z}(\xi)\|_{H^k(0,\ell)}^2 
&\geq \|\check{v}(\xi)\|_{H^k(0,\ell)}^2 - 2 \|\check{v}(\xi)\|_{H^k(0,\ell)} \|(I-P(\xi)) w_0\|_{H^k(0,\ell)} |\check{\psi}(\xi)|\\ 
&\qquad + \, \|(I-P(\xi)) w_0\|^2_{H^k(0,\ell)} |\check{\psi}(\xi)|^2\\
&\geq \frac12\|\check{v}(\xi)\|_{H^k(0,\ell)}^2 - 2 \|(P(0) - P(\xi)) w_0\|^2_{H^k(0,\ell)} |\check{\psi}(\xi)|^2\\
&\geq \frac12\|\check{v}(\xi)\|_{H^k(0,\ell)}^2 - 2 C^2 \xi^2 \|w_0\|^2_{H^k(0,\ell)} |\check{\psi}(\xi)|^2
\end{split}
\end{align}
for $\xi \in (-\xi_1,\xi_1)$. Since $\check{\psi}(\xi)$ is a constant function, we have $\check{\psi'}(\xi) = \iu \xi \check{\psi}(\xi)$. Hence, inserting~\eqref{e:coer4} and~\eqref{e:coer5} into~\eqref{e:coer3} and~\eqref{e:coer33}, respectively, employing~\eqref{e:coer00} to bound $I_3$, and applying Parseval's identity again, we arrive at
\begin{align*}
2\pi \ell \, \langle \A z,z\rangle_{L^2(\R)} &= I_1 + I_2 + I_3\\ 
&\geq \frac{\beta}{16} \|w_0\|^2_{L^2(0,\ell)} \int_{-\xi_1}^{\xi_1} |\iu \xi \check{\psi}(\xi)| \dxi + \frac{\beta \|w_0\|_{L^2(0,\ell)}^2}{8C^2\, \|w_0\|_{H^k(0,\ell)}^2} \int_{-\xi_1}^{\xi_1} \|\check{v}(\xi)\|^2_{H^k(0,\ell)} \dxi\\ 
&\qquad + \, \theta_0 \theta_3 \xi^2_1 \int_{[-\frac{\pi}{\ell},\frac{\pi}{\ell}) \setminus (-\xi_1,\xi_1)} \|\check{v}(\xi)\|^2_{H^k(0,\ell)} \dxi\\ 
&\geq 2\pi\ell \tilde\alpha\left(\|\psi'\|_{L^2(\R)}^2 + \|v\|_{H^k(\R)}^2\right)
\end{align*}
where $\tilde\alpha = \smash{\min\{\beta \|w_0\|_{L^2(0,\ell)}^2/(8(2+C^2\|w_0\|_{H^k(0,\ell)}^2)), \theta_0 \theta_3 \xi_1^2\}}$. Finally, we note that the fact that $\check{\psi}(\xi)$ is a constant function  implies that $\mathcal{B}(\partial^n_x \psi)(\xi) = (\iu \xi)^n \check{\psi}(\xi)$ for all $n \in \N$ and $\xi \in [-\frac\pi\ell,\frac\pi\ell)$. Therefore, Parseval's identity yields a constant $C_1 > 0$, only depending on $j$, such that $\|\psi'\|_{H^j(\R)} \leq C_1 \|\psi'\|_{L^2(\R)}$, which yields the result. 
\end{proof}

\section{Nonlinear tracking scheme} \label{sec:nonlinear_scheme}

Our nonlinear stability analysis combines variational arguments based on the conserved energy $E_\gamma$, given by~\eqref{e:def_energy}, with direct estimates on the Duhamel formulation of the perturbation. To account for the excitation of the neutral translational modes, we introduce a spatiotemporal modulation function that tracks the leading-order translational dynamics of the wave under perturbations. 

In this section, we develop the corresponding tracking scheme. After having introduced the perturbation and the associated relative energy, we first modulate both quantities using an \emph{arbitrary} smooth modulation function. Guided by the resulting expressions and estimates, we then make a judicious choice for the modulation function in a way that is compatible with the coercivity estimate of Proposition~\ref{prop:coercivity}. This choice yields a lower bound for the modulated relative energy, at the cost of a derivative of the modulation function. This allows us to close a nonlinear stability argument in the upcoming section, ultimately showing that the modulation function indeed captures the translational dynamics to leading order.

Throughout this section, we let $u_{\text{tw}}(x,t) = w(x-ct)$ be a periodic traveling-wave solution to the KdV equation~\eqref{KdV} with wave speed $c \in \R$ and profile $w\colon \R \to \R$ of fundamental period $\ell > 0$. Moreover, we let $c_{1,2} \in \R$ be as in~\S\ref{sec:diffusive_KdV_intro}, so that $w$ satisfies the Euler--Lagrange equations~\eqref{e:Euler_Lagrange_1} and~\eqref{e:Euler_Lagrange_2}.

\subsection{The unmodulated perturbation}

Take an initial perturbation $v_0 \in H^3(\R)$, and let $u(t)$ denote the solution to~\eqref{cKdV} with initial condition $u(0) = w + v_0$. Since $w$ is a stationary solution to~\eqref{cKdV}, the perturbation $v(t) = u(t) - w$ obeys the equation
\begin{align} \label{e:unmod_pert_eq}
(\partial_t - \El) v = N(v),
\end{align}
where the operator $\El \colon H^3(\R) \to L^2(\R)$, given by
\begin{align*}
\El = \mathcal{J}A = \partial_x(-\partial_x^2+c-w),
\end{align*}
is the linearization of~\eqref{cKdV} about $w$, and
\begin{align*}
N(v) = \mathcal{J}\left(\mathcal{H}'(w+v) - \mathcal{H}'(w) - A v\right) = -vv_x
\end{align*}
is the nonlinear residual. 

Since~\eqref{e:unmod_pert_eq} can be regarded as a KdV equation with smooth bounded potential, global existence and uniqueness of classical solutions follows from standard well-posedness theory, for instance via the energy method developed by Bona and Smith~\cite{Bona_1975_initial}. The following result was obtained in~\cite[Section 3.1]{erdogan2016dispersive} by combining this energy method with parabolic regularization.

\begin{proposition}[Global existence and uniqueness of the perturbation] \label{prop:globalwellpert}
    Fix $m \in \N_0$. Let $v_0\in H^{m+3}(\R)$. Then,~\eqref{e:unmod_pert_eq} admits a unique global classical solution 
    \begin{align*} v \in C\big(\R,H^{m+3}(\R)\big) \cap C^1\big(\R,H^m(\R)\big)\end{align*} 
    with initial condition $v(0)=v_0$.
\end{proposition}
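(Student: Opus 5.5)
The plan is to treat \eqref{e:unmod_pert_eq}, written out as
\begin{align*}
v_t = -v_{xxx} + c v_x - (wv)_x - v v_x,
\end{align*}
as a KdV equation with smooth, bounded, periodic coefficients, and to follow the Bona--Smith strategy. We regularize with parabolic viscosity, derive a priori bounds uniform in the viscosity, pass to the limit, and upgrade regularity and continuity via the Bona--Smith approximation argument.

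\textbf{Regularized problem.} For $\mu > 0$ we consider
\begin{align*}
v_t = -\mu \partial_x^4 v - v_{xxx} + c v_x - (wv)_x - vv_x .
\end{align*}
Since $\mu\partial_x^4$ generates an analytic semigroup on $H^{m+3}(\R)$ and the remaining terms are of lower order (locally Lipschitz by the algebra property of $H^s$), a contraction argument on the mild formulation gives a local solution $v^\mu \in C([0,T_\mu),H^{m+3}(\R))$. Parabolic smoothing shows that $v^\mu$ is smooth for positive times.

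\textbf{A priori bounds.} This is the main obstacle: we need estimates uniform in $\mu$ and growing at most exponentially in $t$.
\begin{itemize}
\item In $L^2$: we have $\frac{d}{dt}\|v\|_{L^2}^2 \le \|w'\|_{L^\infty}\|v\|_{L^2}^2$, since $\langle vv_x,v\rangle = 0$, $\langle v_{xxx},v\rangle = 0$, and $\langle (wv)_x,v\rangle = \frac12\langle w'v,v\rangle$.
\item In $H^1$ and $H^2$: a naive estimate fails because of the term $\langle v v_{x}^{(j)}, v^{(j)}\rangle$ paired against $v^{(j+1)}$. We instead use the modified KdV energies (mass, Hamiltonian, and the second-order conserved quantity), perturbed by the potential $w$. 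These control $\|v\|_{H^1}$ and $\|v\|_{H^2}$ via Gronwall, with the $w$-terms producing only linear growth factors, since $w \in C^\infty_b$.
\item For $H^s$ with $s \ge 3$: the standard estimate gives
\begin{align*}
\frac{d}{dt}\|v\|_{H^s}^2 \le C\big(1 + \|v_x\|_{L^\infty}\big)\|v\|_{H^s}^2,
\end{align*}
where the derivative-loss term is handled by the commutator (Kato--Ponce) estimate and integration by parts. Because $\|v_x\|_{L^\infty} \lesssim \|v\|_{H^2}$ is already globally controlled, this is linear in $\|v\|_{H^s}^2$, and Gronwall yields a global bound on each bounded time interval.
\end{itemize}
Hence $T_\mu = \infty$ and the bounds are $\mu$-uniform.

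\textbf{Limit and uniqueness.} The differences $v^\mu - v^{\mu'}$ are Cauchy in $C([0,T],L^2)$ by an $L^2$ energy estimate for the difference equation. Interpolating with the uniform $H^{m+3}$ bound gives convergence in $C([0,T],H^{s})$ for all $s < m+3$. The limit $v$ lies in $L^\infty H^{m+3}$ and is weakly continuous in time. Strong continuity into $H^{m+3}$ follows from the Bona--Smith argument: mollify the data, compare solutions in $H^{m+3}$ using the extra decay of $\|v_0 - v_0^\epsilon\|_{H^{m+3-j}}$, and obtain uniform convergence. Uniqueness follows from the same $L^2$ difference estimate.

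\textbf{Time regularity.} From the equation, $v_t \in C(\R,H^m)$, so $v \in C^1(\R,H^m(\R))$. Negative times are handled by the symmetry $x \mapsto -x$, $t \mapsto -t$, which maps the equation to one of the same type with potential $w(-\cdot)$.
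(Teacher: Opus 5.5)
Your proposal is correct and takes the paper's route. The paper gives no self-contained proof; it invokes the Bona--Smith energy method combined with parabolic regularization (Section~3.1 of Erdo\u{g}an--Tzirakis) for a KdV equation with smooth bounded potential, which is the argument you sketch, including the $x\mapsto -x$, $t\mapsto -t$ reflection for negative times. The one step you pass over is that on the viscous problem the $w$-perturbed $H^1$- and $H^2$-energies also produce viscous terms such as $\mu\int v\,v_{xx}^2\,\textnormal{d}x$, which must be absorbed into the dissipation $-\mu\|v_{xxx}\|_{L^2(\R)}^2$ by interpolation to keep the bounds uniform in $\mu$; this can be done.
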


To later pass to the Duhamel formulation and obtain associated estimates, we observe that the linear operator $\El$ in~\eqref{e:unmod_pert_eq} generates a $C_0$-group on $H^s(\R)$ for any $s \in \N_0$. This follows from basic semigroup theory using the perturbation theorems~\cite[Theorems~II.2.7 and~II.1.3]{engel_one-parameter_2000}, together with the fact that the principal part $-\partial_x^3$ is skew-adjoint on $H^s(\R)$ and hence generates a unitary $C_0$-group by Stone's theorem. The same arguments apply to the associated Bloch operators $\El(\xi)\colon H^{s+3}_\per(0,\ell) \to H^s_\per(0,\ell)$, defined by
\begin{align*}
\El(\xi) = (\partial_x+\iu \xi)(-(\partial_x+\iu \xi)^2+c-w).
\end{align*}

\begin{lemma}[Generation of $C_0$-groups] \label{lem:semigroup}
Fix $s \in \N_0$. There exists a constant $M > 0$ such that the operators $\El\colon H^{s+3}(\R) \rightarrow H^s(\R)$ and $\El(\xi)\colon H^{s+3}_\per(0,\ell) \to H^s_\per(0,\ell)$ generate $C_0$-groups on $H^s(\R)$ and $H^s_\per(0,\ell)$, respectively, satisfying
    \begin{align*}
       \left\| \eu^{t\El} \right\|_{B(H^s(\R))}, \left\| \eu^{t\El(\xi)} \right\|_{B(H^s_\per(0,\ell))} \leq \eu^{M |t|}
    \end{align*}
for all $t \in \R$ and $\xi \in [-\frac{\pi}{\ell},\frac{\pi}{\ell})$.
\end{lemma}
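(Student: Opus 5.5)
We treat $\El$ as a bounded perturbation of a skew-adjoint generator after a change of inner product, so that the growth bound is uniform in $\xi$. Write $\El = -\partial_x^3 + B$ with $B = c\partial_x - \partial_x(w\,\cdot) = (c-w)\partial_x - w'$. The principal part $-\partial_x^3$ is skew-adjoint on $H^s(\R)$ with the standard inner product $\langle u,v\rangle_{H^s} = \sum_{i\le s}\langle \partial_x^i u,\partial_x^i v\rangle_{L^2}$, so it generates a unitary group by Stone's theorem. The perturbation $B$ is first order, so it is not bounded on $H^s$. The bounded perturbation theorem \cite[Theorem~III.1.3]{engel_one-parameter_2000} therefore does not apply directly, and we will use an energy (dissipativity) argument instead.

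\textbf{Step 1 (quasi-dissipativity).} For $v \in H^{s+3}(\R)$ we show $|\mathrm{Re}\langle \El v, v\rangle_{H^s}| \le M\|v\|_{H^s}^2$. The contribution of $-\partial_x^3$ vanishes by skew-adjointness. For $B$ we expand $\partial_x^i\big((c-w)v_x\big) = (c-w)\partial_x^{i+1}v + \sum_{r\ge1}\binom{i}{r}\partial_x^r(c-w)\,\partial_x^{i+1-r}v$. The top-order term is handled by integration by parts:
\begin{align*}
\mathrm{Re}\langle (c-w)\partial_x^{i+1} v, \partial_x^i v\rangle_{L^2} = \tfrac12\langle w', |\partial_x^i v|^2\rangle_{L^2},
\end{align*}
which is bounded by $\tfrac12\|w'\|_{L^\infty}\|v\|_{H^s}^2$. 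All remaining terms involve at most $i$ derivatives of $v$ multiplied by bounded derivatives of the smooth periodic $w$, so Cauchy--Schwarz bounds them by $C\|v\|_{H^s}^2$. The same bound holds for $-\El$, and hence for $\pm\El - M$.

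\textbf{Step 2 (range condition).} Next we verify that $\lambda - \El$ is surjective from $H^{s+3}$ onto $H^s$ for $|\lambda| > M$, in both time directions. This is the main technical point. We plan to argue by continuity in a parameter $\tau\in[0,1]$ for $\lambda - (-\partial_x^3 + \tau B)$. Step 1 gives the a priori bound $\|v\|_{H^s} \le (|\lambda|-M)^{-1}\|(\lambda-\El)v\|_{H^s}$, uniformly in $\tau$. Combining it with the elliptic-type bound $\|v\|_{H^{s+3}} \lesssim \|\partial_x^3 v\|_{H^s} + \|v\|_{H^s}$, together with interpolation to absorb $\|Bv\|_{H^s} \le \epsilon\|v\|_{H^{s+3}} + C_\epsilon\|v\|_{H^s}$, controls the graph norm uniformly in $\tau$. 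Surjectivity at $\tau=0$ is explicit, since the Fourier symbol of $\lambda + \partial_x^3$ is $\lambda - \iu k^3 \ne 0$ for real $\lambda \neq 0$. The method of continuity then yields surjectivity at $\tau=1$. The Lumer--Phillips theorem (or \cite[Theorem~II.3.15]{engel_one-parameter_2000}) applied to $\pm\El - M$ then produces a $C_0$-group with $\|\eu^{t\El}\|_{B(H^s(\R))}\le \eu^{M|t|}$.

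\textbf{Step 3 (Bloch operators).} We repeat the argument for $\El(\xi)$ on $H^s_\per(0,\ell)$, replacing $\partial_x$ by $\partial_x + \iu\xi$. Since $\iu\xi$ is purely imaginary for real $\xi$, the operator $(\partial_x+\iu\xi)^3$ is still skew-adjoint with respect to the periodic $H^s$ inner product. The lower-order terms produced by $\xi$ contribute at most $C(1+|\xi|)^{s+1}$ to the quasi-dissipativity constant, and this is uniformly bounded for $\xi \in [-\frac{\pi}{\ell},\frac{\pi}{\ell}]$. Surjectivity is again obtained by continuity from $\tau=0$, where the Fourier symbol becomes $\lambda - \iu(k+\xi)^3$. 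Enlarging $M$ so that it covers both the line case and all $\xi$ in this compact interval gives the stated uniform bound.
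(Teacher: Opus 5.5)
Your argument is correct and follows essentially the paper's route: Stone's theorem for the skew-adjoint principal part $-\partial_x^3$ (resp.\ $-(\partial_x+\iu\xi)^3$), together with the fact that the first-order part $(c-w)\partial_x - w'$ is quasi-dissipative in the $H^s$ inner product and relatively bounded with relative bound $0$, uniformly for $\xi$ in the compact Brillouin zone. The only difference is that the paper cites the Engel--Nagel perturbation theorems for dissipative relatively bounded perturbations and for bounded perturbations, whereas you verify the Lumer--Phillips range condition by hand using the method of continuity in $\tau$; that is precisely how the dissipative perturbation theorem is itself proved (note also that your opening phrase about treating $\El$ as a bounded perturbation after a change of inner product is never used in the argument and can be dropped).
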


\subsection{The relative energy} \label{sec:energy}

We use the energy $E_\gamma$, defined in~\eqref{e:def_energy}, to construct a Lyapunov functional for the perturbation equation~\eqref{e:unmod_pert_eq}. Interchanging subtraction and integration in the formal difference $E_\gamma(w+v) - E_\gamma(w)$, we obtain the smooth nonlinear functional $\Lambda_\gamma \colon H^2(\R) \to \R$, given by
\begin{align} \label{e:def_Lyapunov}
\Lambda_\gamma(v) = \int_\R \mathcal{K}_\gamma(w+v) - \mathcal{K}_\gamma(w) \dx, \qquad \gamma \in \R,
\end{align}
for $v \in C_{\textnormal{c}}^\infty(\R)$, where
\begin{align} \label{e:def_Kgamma}
\begin{split}
\mathcal{K}_\gamma(u) &= \frac12 u_{xx}^2 - \frac{5}{6}uu_x^2 + \frac{5}{72} u^4 - \frac16 c_1 u^2 - \frac12 c^2 u^2 + c_2 u\\
&\qquad + \, \gamma\left(\frac12 u_x^2 - \frac16 u^3 + \frac12 c u^2 + c_1 u\right).
\end{split}
\end{align}
The \emph{relative energy} $\Lambda_\gamma(v)$ is well defined, because $w$ is a critical point of $E_\gamma$. Indeed, integrating by parts and using the Euler--Lagrange equations~\eqref{e:Euler_Lagrange_1} and~\eqref{e:Euler_Lagrange_2}, we arrive at the representation
\begin{align} \label{e:rep_Lyapunov}
\begin{split}
\Lambda_\gamma(v) &= \int_\R \frac12 v_{xx}^2 - \frac56 (w+v) v_x^2 - \frac53 w'vv_x + \frac5{72} v^4 + \frac5{18} w v^3 + \frac5{12} w^2 v^2 - \frac16 c_1 v^2 \\ 
&\qquad - \, \frac12 c^2 v^2 + \gamma\left(\frac12 v_x^2 - \frac16 v^3 - \frac12 w v^2 + \frac12 cv^2\right) \dx
\end{split}
\end{align}
for $v \in C_{\textnormal{c}}^\infty(\R)$, which shows that $\Lambda_\gamma$ defines a nonlinear functional on $H^2(\R)$ by density. 

It follows from the Lenard recursion relation~\eqref{e:Lenard} that the energy $E_\gamma$ is conserved along (sufficiently localized) solutions of~\eqref{cKdV}. If $v(t)$ solves the perturbation equation~\eqref{e:unmod_pert_eq}, then $u(t)=w+v(t)$ and $w$ are both solutions of~\eqref{cKdV}. Consequently, one expects the relative energy $\Lambda_\gamma(v(t))$, which arises from the formal energy difference $E_\gamma(w+v(t)) - E_\gamma(w)$, to be conserved as well. This is confirmed by the following proposition.

\begin{proposition}[Conservation of the relative energy] \label{prop:conservation}
Let $v_0 \in H^3(\R)$, and let $v \in \smash{C\big(\R,H^3(\R)\big) \cap C^1\big(\R,L^2(\R)\big)}$ be the associated solution to~\eqref{e:unmod_pert_eq}, established in Proposition~\ref{prop:globalwellpert}. Then, we have
\begin{align*}
\Lambda_\gamma(v(t)) = \Lambda_\gamma(v_0)
\end{align*}
for all $t \in \R$ and $\gamma \in \R$.
\end{proposition}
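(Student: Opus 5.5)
The plan is to show that $t \mapsto \Lambda_\gamma(v(t))$ is differentiable with vanishing derivative. First I will establish this for smooth, rapidly localized data, and then pass to general $v_0 \in H^3(\R)$ by continuity.

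\textbf{Step 1 (regularity and differentiability).} Since $\Lambda_\gamma$ is a polynomial functional in $v, v_x, v_{xx}$ with bounded smooth $\ell$-periodic coefficients, representation~\eqref{e:rep_Lyapunov} shows that $\Lambda_\gamma\colon H^2(\R)\to\R$ is $C^1$. Its derivative is
\begin{align*}
\Lambda_\gamma'(v)h = \int_\R \big(\mathcal{K}_\gamma'(w+v) - \mathcal{K}_\gamma'(w)\big) h \dx = \big\langle E_\gamma'(w+v), h\big\rangle_{L^2(\R)},
\end{align*}
where $E_\gamma'(u) = \mathcal{H}_2'(u) + \gamma\mathcal{H}'(u)$ is the Euler--Lagrange expression. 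It is well defined as a localized function because $E_\gamma'(w)=0$ by~\eqref{e:Euler_Lagrange_1} and~\eqref{e:Euler_Lagrange_2}. Take $v_0 \in H^{m+3}(\R)$ with $m$ large, for instance $m = 5$. By Proposition~\ref{prop:globalwellpert}, $v \in C^1(\R,H^m(\R))$, so $v_t$ is continuous into $H^2$. The chain rule then gives
\begin{align*}
\frac{d}{dt}\Lambda_\gamma(v(t)) = \big\langle E_\gamma'(w+v),\, \mathcal{J}\big(\mathcal{H}'(w+v)-\mathcal{H}'(w)\big)\big\rangle_{L^2(\R)} = \big\langle E_\gamma'(w+v),\, \mathcal{J}\mathcal{H}'(w+v)\big\rangle_{L^2(\R)},
\end{align*}
since $v_t = \El v + N(v) = \mathcal{J}(\mathcal{H}'(w+v)-\mathcal{H}'(w))$ and $\mathcal{H}'(w)=0$.

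\textbf{Step 2 (the derivative vanishes).} Write $u = w+v$. Using the Lenard relation~\eqref{e:Lenard}, $\mathcal{J}\mathcal{H}_2'(u) = \mathcal{J}_2(u)\mathcal{H}'(u)$, together with the skew-symmetry of $\mathcal{J}=\partial_x$, the integrand becomes $\mathcal{H}_2'(u)\,\partial_x\mathcal{H}'(u) + \gamma\, \mathcal{H}'(u)\partial_x\mathcal{H}'(u)$.

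The $\gamma$-term is $\frac12\partial_x(\mathcal{H}'(u)^2)$. Since $\mathcal{H}'(u) = \mathcal{H}'(u)-\mathcal{H}'(w) \in H^{1}(\R)$ is localized, this term integrates to zero.

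For the other term, $\mathcal{H}'(u)$ and $\mathcal{H}_2'(u)$ both lie in localized Sobolev spaces, so integrating by parts gives
\begin{align*}
\langle \mathcal{H}_2'(u),\partial_x\mathcal{H}'(u)\rangle = -\langle \mathcal{J}\mathcal{H}_2'(u),\mathcal{H}'(u)\rangle = -\langle \mathcal{J}_2(u)\mathcal{H}'(u),\mathcal{H}'(u)\rangle.
\end{align*}
This vanishes because $\mathcal{J}_2(u)$ in~\eqref{e:defJ2} is skew-symmetric on localized functions: the combination $\frac23 y\partial_x + \frac13 y'$ is skew after one integration by parts, and $\partial_x^3$ and $c\partial_x$ are skew. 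All boundary terms vanish because the quantities involved lie in $H^1(\R)$ with sufficient regularity, which is why $m$ is taken large.

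\textbf{Step 3 (density).} For $v_0 \in H^3(\R)$, choose $v_{0,n} \in H^{8}(\R)$ with $v_{0,n}\to v_0$ in $H^3$. I then need continuous dependence: the solutions $v_n(t)\to v(t)$ in $H^2(\R)$, or at least in $H^3$, for each fixed $t$. This follows from the Bona--Smith energy method underlying Proposition~\ref{prop:globalwellpert}. Continuity of $\Lambda_\gamma$ on $H^2$ then transfers the identity $\Lambda_\gamma(v_n(t))=\Lambda_\gamma(v_{0,n})$ to the limit.

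\textbf{Main obstacle.} The subtle point is justifying Step 2 rigorously. $E_\gamma'(u)$ individually involves the nonlocalized $w$, so every pairing must be rewritten in terms of differences, which are localized because $E_\gamma'(w)=\mathcal{H}'(w)=0$. Moreover, I need to check that~\eqref{e:Lenard} holds pointwise as a differential identity for the bounded function $u=w+v$. It is an algebraic identity of differential polynomials, so this should be fine. Continuous dependence in Step 3 is standard but must be cited rather than reproved.
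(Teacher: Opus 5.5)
Your proof is correct and follows the paper's argument. The paper likewise approximates $v$ by smoother solutions $v_k \in C([-t_0,t_0],H^5(\R)) \cap C^1([-t_0,t_0],H^2(\R))$ converging in $C([-t_0,t_0],H^3(\R))$. It differentiates $\Lambda_\gamma(v_k(t))$, uses $\mathcal{H}'(w)=\mathcal{H}_2'(w)=0$ and the Lenard relation, and concludes by skew-symmetry of $\mathcal{J}$ and $\mathcal{J}_2(w+v_k)$, passing to the limit by continuity of $\Lambda_\gamma$ on $H^2(\R)$. The only differences are regularity bookkeeping ($H^5$ approximants rather than your $H^8$) and the well-posedness reference cited for the approximation step.
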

\begin{proof}
Let $t_0 > 0$ and $\gamma \in \R$. It was established in~\cite[Section~3.1]{erdogan2016dispersive} that there exists a sequence of regularized solutions $(v_k)_{k \in \N}$ to~\eqref{e:unmod_pert_eq} in $C([-t_0,t_0],H^5(\R)) \cap C^1([-t_0,t_0],H^2(\R))$
such that $v_k$ converges to $v$ in $C([-t_0,t_0],H^3(\R))$ as $k \to \infty$. In particular, this implies that $\Lambda_\gamma(v_k(t)) \to \Lambda_\gamma(v(t))$ as $k \to \infty$ for all $t \in [-t_0,t_0]$. Hence, it suffices to show that $\Lambda_\gamma(v_k(t)) = \Lambda_\gamma(v_k(0))$ for all $t \in [-t_0,t_0]$ and $k \in \N$. 

Thus, integrating by parts, using the Lenard recursion relation~\eqref{e:Lenard}, and recalling that $w$ satisfies the Euler--Lagrange equations~\eqref{e:Euler_Lagrange_1} and~\eqref{e:Euler_Lagrange_2}, we obtain
\begin{align*}
\partial_t \Lambda_\gamma(v_k(t)) &= \left\langle \mathcal{H}'_2(w+v_k(t)) - \mathcal{H}'_2(w),\partial_t v_k(t)\right\rangle_{L^2(\R)}\\ 
&\qquad + \, \gamma \left\langle \mathcal{H}'(w+v_k(t)) - \mathcal{H}'(w), \partial_t v_k(t)\right\rangle_{L^2(\R)} \\
&= -\left\langle \mathcal{J}_2(w+v_k(t))\left( \mathcal{H}'(w+v_k(t)) - \mathcal{H}'(w)\right),\mathcal{H}'(w+v_k(t)) - \mathcal{H}'(w)\right\rangle_{L^2(\R)}\\
&\qquad - \, \gamma \left\langle \mathcal{J} \left(\mathcal{H}'(w+v_k(t)) - \mathcal{H}'(w)\right),\mathcal{H}'(w+v_k(t)) - \mathcal{H}'(w)\right\rangle_{L^2(\R)}
\end{align*}
for all $t \in [-t_0,t_0]$ and $k \in \N$. Since both $\mathcal{J}$ and $\mathcal{J}_2(w+v_k(t))$ are skew-symmetric differential operators on $L^2(\R)$, we have $\partial_t \Lambda_\gamma(v_k(t)) = 0$ for all $t \in [-t_0,t_0]$ and $k \in \N$.
\end{proof}

\subsection{The inverse-modulated perturbation} \label{sec:inv_mod}

Let $T \in (0,\infty]$ and let $\psi \colon \R \times [0,T) \to \R$ be a smooth function satisfying
\begin{align} \label{e:prop_mod}
\|\psi_x(\cdot,t)\|_{L^\infty(\R)} < \frac12, \qquad \psi \in C^j\big([0,T),H^k(\R)\big), \qquad \psi(\cdot,0) \equiv 0
\end{align}
for all $t \in [0,T)$ and $j,k \in \N_0$. We define the inverse-modulated perturbation $\vt \colon \R \times [0,T) \to \R$ by
\begin{align} \label{e:def_inv_mod_2}
\begin{split}
\vt(x,t) &\coloneqq u(x+\psi(x,t),t) - w(x)\\
&= v(x + \psi(x,t),t) + w(x + \psi(x,t)) - w(x).
\end{split}
\end{align}
Using the smoothness of $w$ and $\psi$, the mean value theorem, and Lemma~\ref{lem:cont_mod}, it follows from Proposition~\ref{prop:globalwellpert} that $\vt$ possesses the same regularity properties as the unmodulated perturbation $v$.

\begin{corollary}[Regularity of the inverse-modulated perturbation] \label{cor:reg_vt}
Fix $m \in \N_0$. Let $v_0 \in H^{m+3}(\R)$, and let $v$ be as in Proposition~\ref{prop:globalwellpert}. Let $\psi \colon \R \times [0,T) \to \R$ be a smooth function satisfying~\eqref{e:prop_mod}. Then, the inverse-modulated perturbation, given by~\eqref{e:def_inv_mod_2}, satisfies
\begin{align*} \vt \in C\big([0,T),H^{m+3}(\R)\big) \cap C^1\big([0,T),H^m(\R)\big), \qquad \vt(0) = v_0. \end{align*} 
\end{corollary}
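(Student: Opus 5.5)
The plan is to view $\vt$ as the composition of $v$ with the near-identity map $\Phi_t(x) = x + \psi(x,t)$, plus the smooth correction $w(x+\psi(x,t)) - w(x)$, and to treat the two pieces separately.

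\emph{Initial value.} Since $\psi(\cdot,0) \equiv 0$ by~\eqref{e:prop_mod}, we have $\Phi_0 = \mathrm{id}$, and hence $\vt(0) = v_0 + w - w = v_0$.

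\emph{The correction term.} We write
\begin{align*}
w(x+\psi(x,t)) - w(x) = \psi(x,t)\int_0^1 w'(x + \tau\psi(x,t)) \,\textnormal{d}\tau.
\end{align*}
All $x$-derivatives of the integral factor are bounded, uniformly in $x$, by polynomials in $\partial_x^i \psi$. These are bounded because $\psi \in C^j([0,T),H^k(\R))$ for every $k$, and $H^k(\R) \hookrightarrow W^{k-1,\infty}(\R)$. Since $\psi(t) \in H^{m+3}(\R)$, the Leibniz rule then places the product in $H^{m+3}(\R)$. Continuity in $t$ follows from $\psi \in C([0,T),H^k(\R))$ for all $k$ and the smoothness of $w$. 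The same argument applied to $\partial_t$ of the correction, namely $w'(x+\psi)\psi_t$, gives the $C^1([0,T),H^m(\R))$ regularity, using $\psi \in C^1([0,T),H^k(\R))$.

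\emph{The composed term $v(\Phi_t(x),t)$.} Because $\|\psi_x\|_{L^\infty} < \frac12$, each $\Phi_t$ is a $C^\infty$ diffeomorphism of $\R$ with $\Phi_t' \in (\frac12,\frac32)$. Changing variables $y = \Phi_t(x)$ and applying the chain rule (Faà di Bruno) then shows that $f \mapsto f\circ\Phi_t$ is bounded on $H^s(\R)$ for each $s \le m+3$, with a bound that is locally uniform in $t$. Continuity of $t \mapsto v(t)\circ\Phi_t$ in $H^{m+3}$ follows by splitting
\begin{align*}
v(t)\circ\Phi_t - v(t_0)\circ\Phi_{t_0} = \bigl(v(t)-v(t_0)\bigr)\circ\Phi_t + \bigl(v(t_0)\circ\Phi_t - v(t_0)\circ\Phi_{t_0}\bigr).
\end{align*}
The first piece is controlled by the uniform operator bound together with $v \in C(\R,H^{m+3}(\R))$ from Proposition~\ref{prop:globalwellpert}. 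The second piece is the continuity of composition in the map for a fixed $H^{m+3}$ function, which is presumably the content of Lemma~\ref{lem:cont_mod}. That lemma can be proved by density: approximate $v(t_0)$ by smooth compactly supported functions, handle those with the mean value theorem, and use the uniform operator bound to pass to the limit.

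For the time derivative, the chain rule gives
\begin{align*}
\partial_t\bigl[v(\Phi_t(x),t)\bigr] = (\partial_t v)(\Phi_t(x),t) + v_x(\Phi_t(x),t)\,\psi_t(x,t).
\end{align*}
The first term is continuous into $H^m$ by the argument above applied to $\partial_t v \in C(\R,H^m(\R))$. The second is continuous into $H^m$ since $v_x \in C(\R,H^{m+2}(\R))$ and $\psi_t$ is smooth with bounded derivatives. To justify the chain rule itself, one can take difference quotients and reuse the same density argument.

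\emph{Main obstacle.} The delicate step is the joint continuity of composition $(f,\Phi) \mapsto f\circ\Phi$ at the top regularity $H^{m+3}$, since no extra derivatives are available. This is exactly where the density argument and the bound $\Phi_t' \ge \frac12$, which keeps the Jacobian of the inverse map bounded, are needed.
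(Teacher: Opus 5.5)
Your proposal is correct and follows the paper's route. The paper obtains the corollary in one line from the smoothness of $w$ and $\psi$, the mean value theorem, the continuity of composition in Lemma~\ref{lem:cont_mod} (proved by the same density and mean-value argument you describe), and Proposition~\ref{prop:globalwellpert}; you spell out that argument in detail. Note only that Lemma~\ref{lem:cont_mod} is an $L^2$ statement, so at the $H^{m+3}$ level you apply it to each $\partial_x^j v(t_0)$ in the Fa\`a di Bruno expansion of $\partial_x^s(v(t_0)\circ\Phi_t)$, whose coefficients depend continuously on $t$ in $L^\infty$.
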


Using that both $u(t) = w+v(t)$ and $w$ solve to~\eqref{cKdV}, we find that the inverse-modulated perturbation $\vt(t)$ satisfies the quasilinear equation
\begin{align} \label{e:modperteq}
(\partial_t-\El)(\vt-w'\psi) = \mathcal{N}(\vt,\nabla \psi), \qquad \mathcal{N}(\vt,\nabla\psi) = \mathcal{Q}_1(\vt,\nabla \psi) + \partial_x\left(\mathcal{Q}_2(\vt,\psi_x)\right)\!,
\end{align}
where the nonlinear residuals are given by
\begin{align*}
\mathcal{Q}_1(\vt,\nabla \psi) &= \vt_x \mathcal{R}_1(\psi_x)\psi_t + w'(\mathcal{R}_1(\psi_x) - \mathcal{R}_1(0)) \psi_t + \mathcal{S}_2(\vt,\psi_x)\mathcal{R}_2(\psi_x)\psi_{xx}, \\ 
\mathcal{Q}_2(\vt,\psi_x) &= \mathcal{S}_1(\vt,\psi_x) + \mathcal{S}_2(\vt,\psi_x)\left(\mathcal{R}_1(\psi_x) - \mathcal{R}_1(0)\right),
\end{align*}
with
\begin{align} \label{e:defS}
\begin{split}
\mathcal{S}_1(\vt,\psi_x) &= -\frac{1}{2}\vt^2 - \vt_{xx}\left( \mathcal{R}_2(\psi_x)-\mathcal{R}_2(0)\right) - w''\left( \mathcal{R}_2(\psi_x)-\mathcal{R}_2(0) -\mathcal{R}_2'(0)\psi_x\right)\\ 
&\qquad + \vt_x \mathcal{R}_3(\psi_x)\psi_{xx}+w' \left( \mathcal{R}_3(\psi_x)-\mathcal{R}_3(0) \right) \psi_{xx},\\
\mathcal{S}_2(\vt,\psi_x) &= c \vt - \vt w - \frac12 \vt^2 - \vt_{xx} \mathcal{R}_2(\psi_x) - w'' (\mathcal{R}_2(\psi_x) - \mathcal{R}_2(0))\\ 
&\qquad + \left(\vt_x+w'\right)\mathcal{R}_3(\psi_x)\psi_{xx},
\end{split}
\end{align}
and $\mathcal{R}_k \colon (-1,1) \to \R$ given by
\begin{align} \label{e:defR_k}
\mathcal{R}_k(y) = \frac{1}{(1+y)^k}, \qquad k \in \N.
\end{align}
Details of the derivation of~\eqref{e:modperteq} are provided in Appendix~\ref{appendixPertEq}. 

With the aid of the continuous embedding $H^1(\R) \hookrightarrow L^\infty(\R)$, one readily obtains the following nonlinear bounds.

\begin{lemma}[Nonlinear bounds] \label{lem:nonlinear_bounds} Fix $R > 0$. Then, there exists a constant $C > 0$ such that
\begin{align*}
\|\mathcal{Q}_1(\vt,\psi_x,\psi_t) \|_{L^2(\R)} &\leq C \left(\|\vt\|_{H^{2}(\R)} + \|\psi_x\|_{H^2(\R)} + \|\psi_t\|_{L^2(\R)}\right)^2,\\
\|\mathcal{Q}_2(\vt,\psi_x) \|_{L^2(\R)} &\leq C\left( \|\vt\|_{H^2(\R)} + \|\psi_x\|_{H^1(\R)}\right)^2,
\end{align*}
for all $\vt \in H^2(\R)$ and $(\psi_x,\psi_t) \in H^2(\R)\times L^2(\R)$ with $\|\psi_x\|_{L^\infty(\R)} \leq \frac12$ and $\|\psi_x\|_{H^2(\R)} \leq R$.
\end{lemma}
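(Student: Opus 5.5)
The plan is a term-by-term inspection of $\mathcal{Q}_1$ and $\mathcal{Q}_2$. Each summand is a product of at least two factors drawn from $\vt,\vt_x,\vt_{xx},\psi_x,\psi_{xx},\psi_t$, multiplied by bounded smooth coefficients. In each product, one factor is placed in $L^2(\R)$ and the others in $L^\infty(\R)$, using $\|f\|_{L^\infty(\R)} \lesssim \|f\|_{H^1(\R)}$.

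First I collect the uniform bounds on the coefficients.
\begin{itemize}
\item The functions $w,w',w''$ are smooth and periodic, hence bounded.
\item Since $\|\psi_x\|_{L^\infty(\R)}\le \frac12$, the argument of each $\mathcal{R}_k$ lies in $[-\frac12,\frac12]$, where $\mathcal{R}_k$ and all its derivatives are bounded. So $\mathcal{R}_k(\psi_x)$ is bounded in $L^\infty(\R)$.
\item By the mean value theorem and Taylor's theorem on $[-\frac12,\frac12]$,
\begin{align*}
|\mathcal{R}_k(\psi_x)-\mathcal{R}_k(0)| &\le C|\psi_x|,\\
|\mathcal{R}_2(\psi_x)-\mathcal{R}_2(0)-\mathcal{R}_2'(0)\psi_x| &\le C|\psi_x|^2 .
\end{align*}
\end{itemize}
The assumption $\|\psi_x\|_{H^2(\R)}\le R$ gives $\|\psi_{xx}\|_{L^\infty(\R)}\le C R$. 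I use this wherever $\psi_{xx}$ multiplies a term that is already quadratic, or where $\psi_{xx}$ has to sit in $L^\infty$.

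For $\mathcal{Q}_2$, I go through $\mathcal{S}_1$ term by term.
\begin{itemize}
\item $\vt^2$: bound by $\|\vt\|_{L^\infty}\|\vt\|_{L^2}$.
\item $\vt_{xx}(\mathcal{R}_2(\psi_x)-\mathcal{R}_2(0))$: bound by $\|\vt_{xx}\|_{L^2}\|\psi_x\|_{L^\infty}$.
\item $w''(\dots)$: by the Taylor bound, at most $C\|\psi_x\|_{L^\infty}\|\psi_x\|_{L^2}$.
\item $\vt_x\mathcal{R}_3\psi_{xx}$: bound by $\|\vt_x\|_{L^\infty}\|\psi_{xx}\|_{L^2}\lesssim \|\vt\|_{H^2}\|\psi_x\|_{H^1}$.
\item $w'(\mathcal{R}_3-\mathcal{R}_3(0))\psi_{xx}$: bound by $\|\psi_x\|_{L^\infty}\|\psi_{xx}\|_{L^2}$.
\end{itemize}

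The product $\mathcal{S}_2(\mathcal{R}_1(\psi_x)-\mathcal{R}_1(0))$ needs care. The factor $\mathcal{S}_2$ is only linear in $(\vt,\psi)$: its terms are $c\vt$, $\vt w$, $\vt_{xx}$, $w''(\cdots)$, and $(\vt_x+w')\mathcal{R}_3\psi_{xx}$. So I estimate $\mathcal{S}_2$ in $L^2(\R)$ by $C(\|\vt\|_{H^2}+\|\psi_x\|_{H^1})$, with $R$ absorbing the quadratic pieces. The difference factor is then bounded by $C\|\psi_x\|_{L^\infty}$.

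The main obstacle is that the claimed bound for $\mathcal{Q}_2$ uses only $\|\psi_x\|_{H^1}$. Hence $\psi_{xx}$ may appear only in $L^2$, never in $L^\infty$, when it is the sole $\psi$-factor. I check that this is always possible.
\begin{itemize}
\item In $\vt_x\mathcal{R}_3\psi_{xx}$, the factor $\vt_x$ carries the $L^\infty$ norm, controlled by $\|\vt\|_{H^2}$.
\item In $\vt_x\mathcal{R}_3\psi_{xx}(\mathcal{R}_1-\mathcal{R}_1(0))$, both $\vt_x$ and $\psi_x$ go to $L^\infty$. This gives a cubic term, which is made quadratic by $\|\psi_x\|_{L^\infty}\le\frac12$.
\end{itemize}
Every term is at least quadratic and, after bounding surplus factors by $\frac12$ or $R$, each is at most $C(\|\vt\|_{H^2}+\|\psi_x\|_{H^1})^2$.

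For $\mathcal{Q}_1$, the argument is the same. I bound $\vt_x\mathcal{R}_1\psi_t$ by $\|\vt_x\|_{L^\infty}\|\psi_t\|_{L^2}$, and $w'(\mathcal{R}_1-\mathcal{R}_1(0))\psi_t$ by $\|\psi_x\|_{L^\infty}\|\psi_t\|_{L^2}$. For $\mathcal{S}_2\mathcal{R}_2\psi_{xx}$, I put $\psi_{xx}$ in $L^\infty$, which is controlled by $\|\psi_x\|_{H^2}$, and $\mathcal{S}_2$ in $L^2$ as above. Young's inequality then combines all products into the squared sum.
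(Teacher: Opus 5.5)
Your term-by-term argument via $H^1(\R)\hookrightarrow L^\infty(\R)$ is what the paper intends; it gives no proof beyond citing that embedding. The argument works, but one step needs repair.

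When you bound $\mathcal{S}_2(\vt,\psi_x)$ linearly in $L^2(\R)$, your list of its terms omits $-\tfrac12\vt^2$. Letting ``$R$ absorb the quadratic pieces'' cannot handle this term. The lemma bounds only $\psi_x$, not $\|\vt\|_{H^2(\R)}$, so the estimate $\|\mathcal{S}_2\|_{L^2(\R)}\le C(\|\vt\|_{H^2(\R)}+\|\psi_x\|_{H^1(\R)})$ is false in general.

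You only need to treat this term separately in each product:
\begin{itemize}
\item In $\mathcal{Q}_2$: since $|\psi_x|\le\tfrac12$ gives $|\mathcal{R}_1(\psi_x)-\mathcal{R}_1(0)|\le 2|\psi_x|\le 1$, you get $\|\vt^2(\mathcal{R}_1(\psi_x)-\mathcal{R}_1(0))\|_{L^2(\R)}\le\|\vt\|_{L^\infty(\R)}\|\vt\|_{L^2(\R)}$.
\item In $\mathcal{Q}_1$: put $\mathcal{R}_2(\psi_x)\psi_{xx}$ in $L^\infty(\R)$, where it is bounded by $CR$, to get $\|\vt^2\mathcal{R}_2(\psi_x)\psi_{xx}\|_{L^2(\R)}\le CR\|\vt\|_{H^1(\R)}^2$.
\end{itemize}
Every other term of $\mathcal{S}_2$ is linear, or, like $\vt_x\mathcal{R}_3(\psi_x)\psi_{xx}$, has a second factor legitimately controlled by $R$ through $\psi_{xx}$. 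So your remaining estimates go through as written and give both claimed quadratic bounds.
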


Using Lemma~\ref{lem:semigroup}, Corollary~\ref{cor:reg_vt}, and identity~\eqref{e:prop_mod}, we may integrate~\eqref{e:modperteq} to obtain the Duhamel formula
\begin{align} \label{e:duhvt}
\vt(t)-w'\psi(t) = \eu^{t\El}v_0 + \int_0^t \eu^{(t-s)\El} \mathcal{N}(\vt(s),\nabla \psi(s)) \ds
\end{align}
for $t \in [0,T)$. In~\S\ref{sec:choice_phase_modulation}, we will use~\eqref{e:duhvt} to define a modulation function $\psi$ compatible with the coercivity estimate of Proposition~\ref{prop:coercivity}.

\subsection{Modulating the relative energy} \label{sec:mod_energy}

It follows from~\eqref{e:rep_Lyapunov} that the leading-order contribution of the relative energy $\Lambda_\gamma(v)$ is the quadratic form $\frac12 \langle \A_\gamma v,v\rangle_{L^2(\R)}$, where $\A_\gamma$ is the differential operator associated with the second variation of $E_\gamma$ about $w$, given by~\eqref{e:defLgamma}. In order to exploit the coercivity estimate from Proposition~\ref{prop:coercivity}, we rewrite $\Lambda_\gamma(v)$ in terms of the inverse-modulated perturbation $\vt$ and the modulation function $\psi$. In view of~\eqref{e:formal_expasion_mod_pert}, one expects the leading-order term of the modulated relative energy to be
$\frac12 \langle \A_\gamma(\vt-w'\psi), \vt-w'\psi\rangle_{L^2(\R)}$, which is a quadratic form amendable to the coercivity estimate of Proposition~\ref{prop:coercivity}.

Let $\psi \in H^2(\R)$ satisfy $\|\psi'\|_{L^\infty(\R)} \leq \frac12$. To express the relative energy $\Lambda_\gamma(v)$ in terms of $\psi$ and
\begin{align} \label{e:invmodpert}
\vt = v(\cdot + \psi(\cdot)) + w(\cdot + \psi(\cdot)) - w
\end{align}
for $v \in H^2(\R)$, we perform a change of variables to the first integral in 
\begin{align} \label{e:Lambda_improper}
\Lambda_\gamma(v) = \lim_{R \to \infty} \int_{-R}^R \mathcal{K}_\gamma(w+v) - \mathcal{K}_\gamma(w) \dx.
\end{align}
The bound $\|\psi'\|_{L^\infty(\R)} \leq \frac12$ implies that the map $h \colon \R \to \R$ given by $h(x) = x + \psi(x)$ is strictly increasing and invertible. Moreover, using the continuous embedding $H^1(\R) \hookrightarrow C_0(\R)$, we have that $R - h^{-1}(R) = \psi(h^{-1}(R)) \to 0$ as $R \to \pm\infty$. Thus, setting $\varkappa \coloneqq \mathcal{K}_\gamma(w + v)$ and making the change of variables $y = h(x)$, we infer
\begin{align} \label{e:subst_rule}
\int_{-R}^R \varkappa(y) \dy = \int_{h^{-1}(-R)}^{h^{-1}(R)} \varkappa(x+\psi(x)) \, (1+\psi'(x)) \dx.
\end{align}
Recalling~\eqref{e:def_Kgamma}, we find that $\varkappa(x+\psi(x))$ can be expressed in terms of
\begin{align} \label{e:diffexp}
\begin{split}
u(x+\psi(x)) &= \vt(x) + w,\\
u'(x+\psi(x)) &= \frac{\vt'(x)+w'(x)}{1+\psi'(x)}, \\
u''(x+\psi(x)) &= \frac{\vt'' (x)+w''(x)}{(1+\psi'(x))^2} - \frac{\vt'(x)+w'(x)}{(1+\psi'(x))^3}\psi''(x),
\end{split}
\end{align}
where $u = w+v$. Substituting~\eqref{e:diffexp} into~\eqref{e:subst_rule}, inserting the resulting expression into~\eqref{e:Lambda_improper}, and using that $R - h^{-1}(R) \to 0$ as $R \to \pm \infty$, we obtain the identity $\Lambda_\gamma(v) = \smash{\widetilde{\Lambda}_\gamma}(\vt,\psi)$, where $\smash{\widetilde{\Lambda}_\gamma} \colon H^2(\R) \times \smash{\{\psi \in H^2(\R) \colon \|\psi'\|_{L^\infty(\R)} \leq \tfrac{1}{2}\}} \to \R$ is the smooth nonlinear functional, given by
\begin{align} \label{e:defwidetildegamma}
\widetilde{\Lambda}_\gamma(\vt,\psi) = \int_{\R} \widetilde{\mathcal{K}}_\gamma(\vt,\psi') - \mathcal{K}_\gamma(w) \dx
\end{align}
for $\vt,\psi \in C_c^\infty(\R)$ with $\|\psi'\|_{L^\infty(\R)} \leq \frac12$. Here, we denote
\begin{align*}
\widetilde{\mathcal{K}}_\gamma(\vt,\phi) &=  \frac{1}{2(1+\phi)^3} \left(\vt''+w''-\frac{\vt'+w'}{1+\phi}\phi'\right)^2 -\frac{5(\vt+w)\left(\vt'+w'\right)^2}{6(1+\phi)}\\
&\qquad + \Bigg(\frac{5}{72} (\vt+w)^4 - \left(\frac16 c_1 + \frac12 c^2\right) (\vt+w)^2 + c_2(\vt+w)\\
&\qquad + \gamma\left(\frac{(\vt'+w')^2}{2(1+\phi)^2} - \frac16 (\vt+w)^3 + \frac12 c(\vt+w)^2 + c_1(\vt+w)\right)\!\Bigg)(1+\phi).
\end{align*}
Altogether, we have established the following result, expressing the relative energy $\Lambda_\gamma(v)$ in terms of $\vt$ and $\psi$.

\begin{proposition}[Modulation of the relative energy] \label{prop:mod_unmod_Lyapunov_eq}
Let $v \in H^2(\R)$ and let $\psi \in H^2(\R)$ satisfy $\|\psi'\|_{L^\infty(\R)} \leq \frac12$. Then, we have
\begin{align} \label{e:functionalid2}
\Lambda_\gamma(v) = \widetilde{\Lambda}_\gamma(\vt,\psi)
\end{align}
for all $\gamma \in \R$, where $\vt$ is given by~\eqref{e:invmodpert}.
\end{proposition}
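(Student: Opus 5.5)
The plan is to prove the identity first for nice functions, $v \in C_c^\infty(\R)$ and $\psi \in C_c^\infty(\R)$ with $\|\psi'\|_{L^\infty(\R)} \leq \frac12$, and then pass to general $v,\psi \in H^2(\R)$ by density.

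For smooth compactly supported data, write $u = w+v$ and $h(x) = x+\psi(x)$. Since $\psi$ has compact support, $h$ equals the identity outside a compact set. Hence, for all $R$ large enough, $h^{-1}(\pm R) = \pm R$, and the boundary discrepancy discussed before the statement vanishes identically. The substitution rule~\eqref{e:subst_rule} applied to $\varkappa = \mathcal{K}_\gamma(u)$ gives
\begin{align*}
\int_{-R}^R \mathcal{K}_\gamma(u)(y)\dy = \int_{-R}^R \mathcal{K}_\gamma(u)(x+\psi(x))\,(1+\psi'(x))\dx .
\end{align*}
Next, I insert the chain-rule identities~\eqref{e:diffexp}. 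I then check term by term, using~\eqref{e:def_Kgamma}, that the integrand equals $\widetilde{\mathcal{K}}_\gamma(\vt,\psi')$. For example, $\frac12 u_{xx}^2(1+\psi')$ becomes $\frac{1}{2(1+\psi')^3}\bigl(\vt''+w''-\frac{\vt'+w'}{1+\psi'}\psi''\bigr)^2$, and the term $-\frac56 uu_x^2(1+\psi')$ becomes $-\frac{5(\vt+w)(\vt'+w')^2}{6(1+\psi')}$. The remaining terms carry the factor $(1+\psi')$ with powers $(1+\psi')^{-2}$ on the squared first derivatives. Subtracting $\int_{-R}^R\mathcal{K}_\gamma(w)\dx$ and letting $R\to\infty$, both sides become integrals of compactly supported integrands. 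Thus $\Lambda_\gamma(v)=\widetilde{\Lambda}_\gamma(\vt,\psi)$ holds for these data, with $\widetilde{\Lambda}_\gamma$ given by~\eqref{e:defwidetildegamma} as an honest Lebesgue integral. Note also that in this case $\vt$ is compactly supported.

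For the general case, I approximate $v$ by $v_n \in C_c^\infty(\R)$ in $H^2(\R)$. I approximate $\psi$ by $\psi_n \in C_c^\infty(\R)$ in $H^2(\R)$ with $\|\psi_n'\|_{L^\infty(\R)} \leq \frac12$. To keep this constraint, one can first mollify and cut off $\psi$, and then multiply by a factor $(1-1/n)$, using $H^1 \hookrightarrow L^\infty$ for $\psi'$; if needed one works with the open constraint $<\frac12$ and then uses continuity. The left-hand side $\Lambda_\gamma(v_n)\to\Lambda_\gamma(v)$, since $\Lambda_\gamma$ is continuous on $H^2(\R)$ by the representation~\eqref{e:rep_Lyapunov}. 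For the right-hand side I need two facts. First, the map $(v,\psi)\mapsto\vt$ from~\eqref{e:invmodpert} is continuous into $H^2(\R)$; this follows from Lemma~\ref{lem:cont_mod}, the mean value theorem, and smoothness and boundedness of $w$. Second, $\widetilde{\Lambda}_\gamma$ extends continuously to $H^2(\R)\times\{\|\psi'\|_{L^\infty(\R)}\le\frac12\}$.

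\textbf{Main obstacle.} The second fact is the main obstacle. The integrand $\widetilde{\mathcal{K}}_\gamma(\vt,\psi')-\mathcal{K}_\gamma(w)$ is not manifestly integrable for $H^2$ data, because it contains terms linear in $\vt$ and $\psi'$ multiplied by periodic non-decaying functions of $w$. My plan is to handle this exactly as was done for $\Lambda_\gamma$ in~\eqref{e:rep_Lyapunov}. I expand the integrand to first order in $(\vt,\psi',\psi'')$ around $(0,0)$ and show that the linear part is an exact derivative for compactly supported data. For the $\vt$-linear terms this holds because $E_\gamma'(w)=0$, that is, by~\eqref{e:Euler_Lagrange_1} and~\eqref{e:Euler_Lagrange_2}. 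For the $\psi'$-linear terms it is a Noether-type identity: the coefficient of $\psi'$ combines with the $\psi''$-term, after integrating by parts, into the derivative of the first integral $\mathcal{K}_\gamma(w)-w'\partial_{w'}\mathcal{K}_\gamma - \dots$, which is constant. The remaining quadratic remainder is then bounded by $C(\|\vt\|_{H^2}+\|\psi'\|_{H^1})^2$ via Taylor's theorem and $H^1\hookrightarrow L^\infty$, with $(1+\psi')^{-k}$ bounded since $|\psi'|\le\frac12$. This yields a representation of $\widetilde{\Lambda}_\gamma$ analogous to~\eqref{e:rep_Lyapunov}, which is locally Lipschitz. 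Finally, passing to the limit $n\to\infty$ in the identity for $(v_n,\psi_n)$ gives~\eqref{e:functionalid2}.
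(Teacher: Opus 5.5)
Your proposal is correct, but it takes a different route from the paper. The paper performs the substitution $y = x+\psi(x)$ directly for general $v,\psi \in H^2(\R)$ on the truncated integrals in \eqref{e:Lambda_improper}. It controls the mismatch between $[-R,R]$ and $[h^{-1}(-R),h^{-1}(R)]$ through $R - h^{-1}(R) = \psi(h^{-1}(R)) \to 0$ (from $H^1(\R)\hookrightarrow C_0(\R)$), together with the boundedness of the periodic density $\mathcal{K}_\gamma(w)$. This is short, but it tacitly identifies the resulting limit of integrals of $\widetilde{\mathcal{K}}_\gamma(\vt,\psi')-\mathcal{K}_\gamma(w)$ over $[h^{-1}(-R),h^{-1}(R)]$ with $\widetilde{\Lambda}_\gamma(\vt,\psi)$. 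However, \eqref{e:defwidetildegamma} defines $\widetilde{\Lambda}_\gamma$ only for compactly supported data, and otherwise by extension.

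You avoid the endpoint issue altogether, since $h=\mathrm{id}$ off a compact set. The work moves into showing that $\widetilde{\Lambda}_\gamma$ extends continuously, which is precisely the step the paper leaves implicit. You do this by removing the linear terms via the Euler--Lagrange equations and the Noether first integral, which gives a representation analogous to \eqref{e:rep_Lyapunov}.

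When you write this out, note that the constant $a-b'$ (with $a,b$ the coefficients of $\psi'$ and $\psi''$ in the linear part) is in general nonzero. In the normalization of Appendix~\ref{appendixPhPA} it equals $-\frac12 c_1^2$ by \eqref{e:HamiltonianODE}. Hence the $\psi'$-linear part is $(a-b')\psi' + (b\psi')'$, and it integrates to zero only because $\psi'$ is the derivative of a decaying function. Your continuity statement must therefore be formulated in terms of $\psi \in H^2(\R)$, as you have it. It would not hold with a general $\phi\in H^1(\R)$ in place of $\psi'$.

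If you prefer to skip the Noether computation, there is a shortcut. On the dense set of compactly supported data, $\widetilde{\Lambda}_\gamma(\vt,\psi) = \Lambda_\gamma\big(\vt\circ(\mathrm{id}+\psi)^{-1} + w\circ(\mathrm{id}+\psi)^{-1} - w\big)$. So continuity of this inverse-modulation map into $H^2(\R)$ would already give the continuous extension.
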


Expanding the \emph{modulated relative energy} $\smash{\widetilde{\Lambda}}_\gamma(\vt,\psi)$ in terms of $\vt$ and $\psi$, we arrive at the desired nonlinear estimate.

\begin{lemma}[Expansion of the modulated relative energy] \label{lem:mod_energy}
Fix $R > 0$. There exists a constant $C > 0$ such that
\begin{align} \label{e:mod_energy_exp}
\left|\widetilde{\Lambda}_\gamma(\vt,\psi) - \frac12 \langle \A_\gamma(\vt-w'\psi), \vt-w'\psi\rangle_{L^2(\R)}\right| \leq C \left(\|\vt\|_{H^2(\R)} + \|\psi'\|_{H^1(\R)} \right)^3
\end{align}
for all $\gamma \in \R$ and $\vt, \psi \in H^4(\R)$ with $\|\psi'\|_{L^\infty(\R)} \leq \frac12$ and $\|\vt\|_{H^2(\R)}, \|\psi'\|_{H^1(\R)} \leq R$.
\end{lemma}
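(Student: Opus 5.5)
The plan is a Taylor expansion of the integrand $\widetilde{\mathcal{K}}_\gamma(\vt,\phi) - \mathcal{K}_\gamma(w)$ in the small quantities $(\vt,\vt',\vt'',\phi,\phi')$, with $\phi = \psi'$. All cubic and higher remainders are then controlled by $(\|\vt\|_{H^2} + \|\psi'\|_{H^1})^3$. First I work with $\vt,\psi \in C_c^\infty(\R)$ and extend to $H^4$ by density at the end. Since $\vt,\psi \in H^4$ and $\|\psi'\|_{L^\infty}\le \frac12$, both sides of~\eqref{e:mod_energy_exp} are continuous with respect to the relevant norms. The key observation is that $\widetilde{\mathcal{K}}_\gamma(\vt,\phi)$ is a polynomial in $\vt,\vt',\vt'',\phi'$ with coefficients that are smooth functions of $\phi \in [-\frac12,\frac12]$, namely $(1+\phi)^{-m}$, and of $w$ and its derivatives. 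Hence the second-order Taylor remainder is a finite sum of terms in which three small factors from $\{\vt,\vt',\vt'',\phi,\phi'\}$ are multiplied by bounded functions. Such terms are estimated in $L^1$ as follows: at most two factors carry the top derivatives $\vt''$ or $\phi' = \psi''$ and are placed in $L^2$, while the remaining factors are placed in $L^\infty$ via $H^1\hookrightarrow L^\infty$. The point to check is that $\vt''$ and $\phi'$ only enter at most quadratically. This holds since $\phi'$ appears only inside the squared bracket. The term $\phi'^2$ gets multiplied by $(\vt'+w')^2$, so a cubic product such as $(\phi')^2\phi$ or $(\phi')^2\vt'$ uses two $L^2$ factors and one $L^\infty$ factor. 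Terms with higher powers are handled by the constraint $R$, which bounds the extra $L^\infty$ factors.

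Second, the zeroth- and first-order Taylor terms must be identified. The zeroth order term vanishes. The first-order term is $\int \partial_{\vt}\mathcal K\cdot \vt + \partial_\phi\widetilde{\mathcal K}\cdot\phi + \dots$, and it must vanish after integration by parts. The $\vt$-part is $\langle E_\gamma'(w),\vt\rangle = 0$ by the Euler--Lagrange equations~\eqref{e:Euler_Lagrange_1},~\eqref{e:Euler_Lagrange_2}. The $\phi$-part equals $\int (\mathcal K_\gamma(w) - w'\,\partial_{w'}\mathcal K - 2w''\partial_{w''}\mathcal K + \dots)\psi'\dx$. I would avoid computing this directly. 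Instead I use the structural identity behind it: for $\vt \equiv -w'\psi + $ (terms of order $\psi^2$), i.e.\ $v\equiv 0$, one has $\widetilde\Lambda_\gamma(\vt,\psi) = \Lambda_\gamma(0) = 0$ by Proposition~\ref{prop:mod_unmod_Lyapunov_eq}. More cleanly, I expand
\[
\widetilde{\Lambda}_\gamma(\vt,\psi) = \Lambda_\gamma(v), \qquad v = \vt(h^{-1}) + w(h^{-1}) - w\circ h\circ h^{-1},
\]
so that the linear and quadratic structure follows from the representation~\eqref{e:rep_Lyapunov}, in which $\Lambda_\gamma(v) = \frac12\langle \A_\gamma v,v\rangle + \text{cubic}$.

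This suggests the cleanest route for the third step. Write $\widetilde\Lambda_\gamma(\vt,\psi) - \frac12\langle\A_\gamma(\vt - w'\psi),\vt-w'\psi\rangle$ as the sum of the exact second-order Taylor polynomial $Q(\vt,\psi')$ of $\widetilde{\mathcal K}_\gamma$ minus the quadratic form, plus the cubic remainder from the first step. The quadratic Taylor part $Q$ is bilinear in $(\vt,\phi)$ and is determined by its values. Testing it against the family where $\vt = -w'\psi$ up to first order, which corresponds to $v=0$ to first order, and against $\psi=0$, where it reduces to $\frac12\langle\A_\gamma\vt,\vt\rangle$ by~\eqref{e:rep_Lyapunov}, identifies $Q(\vt,\psi') = \frac12\langle \A_\gamma(\vt-w'\psi),\vt-w'\psi\rangle$. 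Formally this uses that the second variation is invariant under the linearized change of variables $v = \vt - w'\psi + O(2)$, together with $\A_\gamma w'=0$ and the vanishing of the first variation. The $O(2)$ correction only affects the cubic order because the first variation vanishes.

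The main obstacle is making this identification rigorous without hidden dependence on $\|\psi\|_{L^2}$. The quadratic form involves $w'\psi$, yet the error bound may only involve $\psi'$. I expect to handle this by noting that $\langle \A_\gamma(\vt - w'\psi),\vt-w'\psi\rangle$ can be rewritten, using $\A_\gamma w' = 0$ and integration by parts (commutators $[\A_\gamma,\psi]$ only involve derivatives of $\psi$), as a quadratic form in $(\vt,\psi')$ alone. This rewriting is then matched term by term with the Taylor coefficients of $\widetilde{\mathcal K}_\gamma$, which is a direct but lengthy computation.
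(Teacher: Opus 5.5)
Your plan is the paper's plan: expand $\widetilde{\Lambda}_\gamma$ as a function of $(\vt,\psi')$ to second order with a remainder bounded by $(\|\vt\|_{H^2(\R)}+\|\psi'\|_{H^1(\R)})^3$, and identify the linear and quadratic parts through $\widetilde{\Lambda}_\gamma=\Lambda_\gamma\circ V$, where $V(\vt,\psi)=\vt\circ(\mathrm{id}+\psi)^{-1}+w\circ(\mathrm{id}+\psi)^{-1}-w$. (The paper does the expansion for the auxiliary functional $\mathring{\Lambda}_\gamma$ with $\widetilde{\Lambda}_\gamma(\vt,\psi)=\mathring{\Lambda}_\gamma(\vt,\psi')$.) Your Step~1 is fine. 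What does not close as written is the identification of $Q$.

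The ``testing'' argument cannot determine $Q$. The subspaces $\{\psi=0\}$ and $\{\vt=w'\psi\}$ are complementary, and knowing a quadratic form on each of them leaves the cross term between them undetermined. There is also a sign slip: $v=0$ corresponds to $\vt=+w'\psi$ to first order, since $V'(0,0)(\vt,\psi)=\vt-w'\psi$. Your fallback would work but is not carried out: rewrite $\langle\A_\gamma(\vt-w'\psi),\vt-w'\psi\rangle_{L^2(\R)}$ in terms of $(\vt,\psi')$ and match Taylor coefficients term by term. It is also motivated by an obstacle that does not exist.

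The missing observation is that the chain-rule argument you call formal is already rigorous and exact. Fix $\vt,\psi\in C_c^\infty(\R)$ and set $f(t)=\widetilde{\Lambda}_\gamma(t\vt,t\psi)=\Lambda_\gamma(V(t\vt,t\psi))$. By Step~1 and homogeneity, $f(t)=tL+t^2Q+\mathcal{O}(t^3)$.

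On the other hand, for small $|t|$ the map $t\mapsto V(t\vt,t\psi)$ is a smooth curve in $H^2(\R)$ through $0$ with velocity $\vt-w'\psi$. By~\eqref{e:rep_Lyapunov}, $\Lambda_\gamma'(0)=0$ and $\Lambda_\gamma''(0)[v_1,v_2]=\langle\A_\gamma v_1,v_2\rangle_{L^2(\R)}$. Hence $f'(0)=0$ and $f''(0)=\langle\A_\gamma(\vt-w'\psi),\vt-w'\psi\rangle_{L^2(\R)}$, because the acceleration of the curve is annihilated by $\Lambda_\gamma'(0)=0$.

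Comparing coefficients gives $L=0$ and $2Q=\langle\A_\gamma(\vt-w'\psi),\vt-w'\psi\rangle_{L^2(\R)}$ exactly. Any $\psi$-dependent constants in the chain-rule expansion are irrelevant, since only coefficients are compared. The uniform cubic bound comes from Step~1 alone, and the identity itself shows that the quadratic form depends on $\psi$ only through $\psi'$. This is the paper's proof; the ray version even avoids the Fr\'echet differentiability of $V$ on a function space that the paper invokes.

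One caveat on Step~2. After integration by parts, the coefficient of the linear $\psi'$-term is not zero. It is the constant first integral of the profile equations associated with translation invariance; by~\eqref{e:HamiltonianODE} it equals $-\tfrac12 c_1^2\neq 0$ in the normalization of Appendix~\ref{appendixPhPA}. So this term vanishes only because $\int_\R\psi'\dx=0$. Expanding at $\phi=\psi'$ with $\psi$ compactly supported, as you do, is therefore essential. Treating $\phi$ as an independent $H^1$ variable, as the paper's $\mathring{\Lambda}_\gamma$ does, would leave a term proportional to $\int_\R\phi\dx$, which is not continuous on $H^1(\R)$.
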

\begin{proof}
Set $X_j = \{\psi \in H^{j+1}(\R) \colon \|\psi'\|_{L^\infty(\R)} \leq \smash{\frac{1}{2}}\}$ for $j \in \N$. By definition, we have
\begin{align} \label{e:functionalid}
\widetilde{\Lambda}_\gamma(\vt,\psi) = \mathring{\Lambda}_\gamma(\vt,\psi')
\end{align}
for $(\vt,\psi) \in H^2(\R) \times X_1$, where the smooth nonlinear functional $\smash{\mathring{\Lambda}}_\gamma \colon H^2(\R) \times \{\phi \in H^1(\R) \colon \|\phi\|_{L^\infty(\R)} \leq \smash{\frac{1}{2}}\} \to \R$ is given by 
\begin{align*}
\mathring{\Lambda}_\gamma(\vt,\phi) = \int_{\R} \widetilde{\mathcal{K}}_\gamma(\vt,\phi) - \mathcal{K}_\gamma(w) \dx
\end{align*}
for $\vt,\phi \in C^\infty_c(\R)$ with $\|\phi\|_{L^\infty(\R)} \leq \frac12$.
Expanding $\mathring{\Lambda}_\gamma$, using that $\smash{\mathring{\Lambda}_\gamma(0,0)} = 0$, and employing~\eqref{e:functionalid}, we obtain a constant $C > 0$ such that
\begin{align} \label{e:exp}
\begin{split}
&\left|\widetilde{\Lambda}_\gamma(\vt,\psi) - \mathring{\Lambda}_\gamma'(0,0)\left[\begin{pmatrix} \vt \\ \psi'\end{pmatrix}\right] - \frac12 \mathring{\Lambda}_\gamma''(0,0)\left[\begin{pmatrix} \vt \\ \psi'\end{pmatrix}, \begin{pmatrix} \vt \\ \psi'\end{pmatrix} \right]\right|\\ 
&\qquad \leq C \left(\|\vt\|_{H^2(\R)} + \|\psi'\|_{H^1(\R)} \right)^3
\end{split}
\end{align}
for all $(\vt,\psi) \in H^2(\R) \times X_1$ with $\|\vt\|_{H^2(\R)},\|\psi'\|_{H^1(\R)} \leq R$. It follows from the identity~\eqref{e:functionalid} that
\begin{align} \label{e:der1}
\begin{split}
\mathring{\Lambda}_\gamma'(0,0)\left[\begin{pmatrix} \vt_1 \\ \psi_1'\end{pmatrix}\right] &= \widetilde{\Lambda}_\gamma'(0,0)\left[\begin{pmatrix} \vt_1 \\ \psi_1\end{pmatrix}\right],\\
\mathring{\Lambda}_\gamma''(0,0)\left[\begin{pmatrix} \vt_1 \\ \psi_1'\end{pmatrix},\begin{pmatrix} \vt_2 \\ \psi_2'\end{pmatrix} \right] &= \widetilde{\Lambda}_\gamma''(0,0)\left[\begin{pmatrix} \vt_1 \\ \psi_1\end{pmatrix},\begin{pmatrix} \vt_2 \\ \psi_2\end{pmatrix} \right]
\end{split}
\end{align}
for all $(\vt_1,\psi_2),(\vt_2,\psi_2) \in H^2(\R) \times X_1$. 

Thus, to establish~\eqref{e:mod_energy_exp}, it remains to compute the first and second variation of $\smash{\widetilde{\Lambda}}_\gamma$ at $(0,0)$. While these could in principle be obtained by collecting the linear and quadratic terms in $(\tilde v,\psi)$ in~\eqref{e:defwidetildegamma} and subsequently integrating by parts and using the Euler--Lagrange equations~\eqref{e:Euler_Lagrange_1} and~\eqref{e:Euler_Lagrange_2}, it is more convenient to instead differentiate the identity~\eqref{e:functionalid2} using the chain rule. To this end, we first infer from~\eqref{e:rep_Lyapunov} that
\begin{align} \label{e:Lambdagammader}
\Lambda_\gamma(0) = 0, \qquad \Lambda_\gamma'(0) = 0, \qquad \Lambda_\gamma''(0) \left[v_1,v_2 \right] = \langle \A_\gamma v_1,v_2\rangle_{L^2(\R)}
\end{align}
for $v_1,v_2 \in H^4(\R)$. Next, we introduce the map $V \colon H^4(\R) \times X_3 \to H^4(\R)$, given by
\begin{align*}
V(\vt,\psi) = \vt \circ (\mathrm{id} + \psi)^{-1} + w \circ (\mathrm{id} + \psi)^{-1} - w.
\end{align*}
As $\mathrm{id} + \psi \colon \R \to \R$ is invertible for each $\psi \in C^1(\R)$ with $\|\psi'\|_{L^\infty(\R)} \leq \frac12$, it follows from the mean value theorem and integration by substitution that $V$ is well-defined and twice Fr\'echet differentiable with first derivative
\begin{align} \label{e:Vder}
V'(0,0)\left[\begin{pmatrix} \vt \\ \psi\end{pmatrix}\right] = \vt - w'\psi
\end{align}
for $(\vt,\psi) \in H^4(\R) \times X_3$. By construction, $v = V(\vt,\psi)$ satisfies~\eqref{e:invmodpert}. Hence, Proposition~\ref{prop:mod_unmod_Lyapunov_eq} yields
\begin{align*}
\widetilde{\Lambda}_\gamma(\vt,\psi) = \Lambda_\gamma(V(\vt,\psi)),
\end{align*}
for all $(\vt,\psi) \in H^4(\R) \times X_3$.  Applying the chain rule to this identity, using that $V(0,0)=0$, and invoking~\eqref{e:Lambdagammader} and~\eqref{e:Vder}, we obtain
\begin{align*}
\widetilde{\Lambda}_\gamma'(0,0) = 0, \qquad \widetilde{\Lambda}_\gamma''(0,0) \left[\begin{pmatrix} \vt_1 \\ \psi_1\end{pmatrix},\begin{pmatrix} \vt_2 \\ \psi_2\end{pmatrix} \right] = \langle \A_\gamma (v_1-w'\psi_1),v_2-w'\psi_2\rangle_{L^2(\R)}
\end{align*}
for all $(\vt_1,\psi_1),(\vt_2,\psi_2) \in H^4(\R) \times X_3$. Combining this with~\eqref{e:exp} and~\eqref{e:der1} completes the proof.
\end{proof}

We may now use the coercivity estimate from Proposition~\ref{prop:coercivity} to bound the quadratic form in the expansion~\eqref{e:mod_energy_exp} from below.

\begin{corollary}[Lower bound on the modulated relative energy] \label{cor:lower_bound_energy} Let $I$ be the interval from Theorem~\ref{thm:diffusive_intro}. Fix $R > 0$, $k \in\N$, and $\gamma \in I$. 

Then, for each $\xi_1 \in (0,\frac{\pi}{\ell})$ sufficiently small, there exist constants $C,\alpha > 0$ and a smooth map $P_\gamma \colon (-\xi_1,\xi_1) \to B(L^2_\per(0,\ell))$ such that $P_\gamma(\xi)$ is the spectral projection associated with the smallest eigenvalue of the Bloch operator $\A_\gamma(\xi)$. It holds
\begin{align} \label{e:eigenfunction_Bloch}
\alpha \leq \|P_\gamma(\xi)w'\|_{L^2_\per(0,\ell)} \leq C
\end{align}
for all $\xi \in (-\xi_1,\xi_1)$. Moreover, for all $\vt_1,\vt_2 \in H^2(\R)$ and $\psi \in H^{k+1}(\R)$ obeying
\begin{itemize}
    \item[(i)] $\mathcal{B}(\vt_1)(\xi) \in \ker(P_\gamma(\xi))$ for all $\xi \in (-\xi_1,\xi_1)$;
    \item[(ii)] $\check{\psi}(\xi)$ is a constant function for each $\xi \in [-\frac{\pi}{\ell},\frac{\pi}{\ell})$;
    \item[(iii)] $\check{\psi}(\xi) \equiv 0$ for all $\xi \in [-\frac{\pi}{\ell},\frac{\pi}{\ell}) \setminus (-\xi_1,\xi_1)$;
    \item[(iv)] $\|\vt_1\|_{H^2(\R)},\|\vt_2\|_{H^2(\R)}, \|\psi'\|_{H^1(\R)} \leq R$ and $\|\psi'\|_{L^\infty(\R)} \leq \frac12$,
\end{itemize}
we have
\begin{align} \label{e:descoer}
\begin{split}
\widetilde{\Lambda}_\gamma(\vt_1 + \vt_2,\psi) &\geq \alpha \left(\|\vt_1\|_{H^2(\R)}^2 + \|\psi'\|_{H^k(\R)}^2\right)\\
&\qquad - \, C \left(\|\vt_2\|_{H^2(\R)}^2 + \left(\|\vt_1\|_{H^2(\R)} + \|\psi'\|_{H^1(\R)}\right)^3 \right).
\end{split}
\end{align}
\end{corollary}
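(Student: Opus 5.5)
The plan is to combine Theorem~\ref{thm:diffusive_intro} with Lemma~\ref{lem:low_freq}, Proposition~\ref{prop:coercivity} and Lemma~\ref{lem:mod_energy}. The key step is to apply Proposition~\ref{prop:coercivity} to the operator $\A = \A_\gamma$, and then absorb the cross terms involving $\vt_2$ by Cauchy--Schwarz.

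First, the hypotheses of the general theory must be verified for $\A_\gamma$. It is a scalar ($m=1$) self-adjoint operator of order $2k'=4$, so $k'=2$. Its $\ell$-periodic coefficients are smooth, because $w$ is smooth. It is uniformly elliptic, since the leading coefficient is $1$ and $(-1)^2\cdot 1 = 1 >0$. By Theorem~\ref{thm:diffusive_intro}, $-\A_\gamma$ is diffusively spectrally stable for $\gamma\in I$ with $\ker \A_\gamma(0) = \mathrm{Sp}\{w'\}$, and $0$ is the smallest eigenvalue of $\A_\gamma(0)$ (take $\xi=0$ in~\eqref{e:coercivity_diffusive}).

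Lemma~\ref{lem:low_freq} then furnishes the analytic rank-one spectral projection $P_\gamma(\xi)$ on a neighborhood $U$ of $0$. For $\xi_1$ small enough that $[-\xi_1,\xi_1]\subset U$, its restriction to $(-\xi_1,\xi_1)$ is smooth. For the bounds~\eqref{e:eigenfunction_Bloch}, the upper bound follows from $\sup_{|\xi|\le\xi_1}\|P_\gamma(\xi)\|<\infty$. The lower bound follows from estimate~\eqref{e:coer4} in the proof of Proposition~\ref{prop:coercivity}, which gives $\|P_\gamma(\xi)w'\|\ge \frac34\|w'\|_{L^2(0,\ell)}>0$ once $\xi_1\le 1/(4C)$. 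Here $w'\neq 0$ because $w$ has fundamental period $\ell>0$ and so is nonconstant.

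Next, set $z=\vt_1+\vt_2-w'\psi$ and write $z_1 = \vt_1-w'\psi$. Lemma~\ref{lem:mod_energy} requires $H^4$ regularity. I would apply it on a dense set and pass to the limit using continuity of $\widetilde\Lambda_\gamma$ on $H^2\times X_1$; alternatively, one can note that the expansion~\eqref{e:exp} already holds in $H^2(\R)\times X_1$ with the identified second variation extended by density. Lemma~\ref{lem:mod_energy} applied with radius $2R$ gives
\[
\widetilde\Lambda_\gamma(\vt_1+\vt_2,\psi)\ge \tfrac12\langle \A_\gamma z,z\rangle - C(\|\vt_1\|_{H^2}+\|\vt_2\|_{H^2}+\|\psi'\|_{H^1})^3 .
\]
Expanding the quadratic form gives
\[
\langle \A_\gamma z,z\rangle = \langle \A_\gamma z_1,z_1\rangle + 2\langle \A_\gamma z_1,\vt_2\rangle + \langle \A_\gamma \vt_2,\vt_2\rangle .
\]
Proposition~\ref{prop:coercivity}, applied with $j=\max\{k,3\}\ge 2k'-1$ and using hypotheses (i)--(iii), bounds the first term below by $\alpha_0(\|\vt_1\|_{H^2}^2+\|\psi'\|_{H^j}^2)$. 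For the other two terms, $\A_\gamma$ is a fourth-order operator, so as a bilinear form it is bounded on $H^2\times H^2$ after integrating by parts twice. Hence
\[
|\langle \A_\gamma z_1,\vt_2\rangle|\le C\|z_1\|_{H^2}\|\vt_2\|_{H^2} .
\]
The factor $\|z_1\|_{H^2}$ contains $\|w'\psi\|_{H^2}$, which involves $\psi$ itself and not only $\psi'$; this is the main obstacle. The remedy is that by (ii)--(iii) $\psi$ is Bloch-supported near $\xi=0$, so $w'\psi$ is not controlled by $\psi'$. Instead I would move the operator onto $w'\psi$. By (ii), $\mathcal{B}(w'\psi)(\xi)=w'\check\psi(\xi)$, and $\A_\gamma(\xi)w' = (\A_\gamma(\xi)-\A_\gamma(0))w' = \mathcal O(|\xi|)$ in $L^2_\per(0,\ell)$. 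Hence
\[
\|\A_\gamma(w'\psi)\|_{L^2(\R)}\lesssim \|\psi'\|_{L^2(\R)},
\]
by Parseval and $\check{\psi'}(\xi)=\iu\xi\check\psi(\xi)$. Consequently
\[
|\langle \A_\gamma z_1,\vt_2\rangle| \le C(\|\vt_1\|_{H^2}+\|\psi'\|_{L^2})\|\vt_2\|_{H^2}.
\]

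It remains to combine the pieces. Young's inequality absorbs the cross term into $\frac{\alpha_0}{2}(\|\vt_1\|^2_{H^2}+\|\psi'\|^2_{H^k})$ at the cost of $C\|\vt_2\|_{H^2}^2$, and the term $\langle\A_\gamma\vt_2,\vt_2\rangle$ is also bounded by $C\|\vt_2\|_{H^2}^2$. For the cubic remainder, $(a+b+c)^3\le 4(a+c)^3+4b^3$, and $\|\vt_2\|^3_{H^2}\le R\|\vt_2\|_{H^2}^2$ by (iv), so it contributes only $C\|\vt_2\|^2_{H^2}$ plus $C(\|\vt_1\|_{H^2}+\|\psi'\|_{H^1})^3$. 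Together these give~\eqref{e:descoer}.
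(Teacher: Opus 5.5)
Your proposal is correct and follows essentially the same route as the paper. Theorem~\ref{thm:diffusive_intro} and Lemma~\ref{lem:low_freq} supply $P_\gamma$ and the bounds on $P_\gamma(\xi)w'$, and Proposition~\ref{prop:coercivity} controls $\langle \A_\gamma(\vt_1-w'\psi),\vt_1-w'\psi\rangle_{L^2(\R)}$. The cross terms with $\vt_2$ are handled by exploiting $\A_\gamma w'=0$, so that only $\psi'$ appears. Lemma~\ref{lem:mod_energy}, Young's inequality, and a density argument then finish the proof.

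The only difference is in the cross-term estimate. You bound $\A_\gamma(w'\psi)$ in Bloch space via $(\A_\gamma(\xi)-\A_\gamma(0))w'=\mathcal{O}(|\xi|)$, which relies on hypothesis~(ii). The paper instead writes out $\A_\gamma(w'\psi)$ explicitly in physical space and integrates by parts. This gives the bound $C(\|z\|_{H^2(\R)}+\|\psi'\|_{H^1(\R)})\|\vt_2\|_{H^2(\R)}$ for arbitrary $\psi\in H^4(\R)$, without using~(ii).
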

\begin{proof}
The operator $-\A_\gamma$ is diffusively spectrally stable by Theorem~\ref{thm:diffusive_intro} with $\ker(\A_\gamma(0)) = \textnormal{Sp}\{w'\}$. Hence, Lemma~\ref{lem:low_freq} yields an open neighborhood $U_\gamma \subset \C$ of $0$ and an analytic map $P_\gamma \colon U_\gamma \to B(L^2_\per(0,\ell))$ such that $P_\gamma(\xi)$ is the spectral projection associated with the smallest eigenvalue of $\A_\gamma(\xi)$ for all $\xi \in U_\gamma$ and we have $P_\gamma(0) w' = w'$. Hence, for each $\xi_1 \in (0,\frac{\pi}{\ell})$ sufficiently small, there exist constants $C_1,\alpha_1 > 0$ such that $[-\xi_1,\xi_1] \subset U_\gamma$ and we have $\alpha_1 \leq \|P_\gamma(\xi)w'\| \leq C_1$ for $\xi \in (-\xi_1,\xi_1)$.

Using~\eqref{e:defLgamma} together with $\A_\gamma(0) w' = 0$, we obtain
\begin{align*}
\A_\gamma(w'\psi) &= w'\psi'''' + 4w''\psi''' + 6 w''' \psi'' + 4 w'''' \psi' + \left(\frac53 w- \gamma\right) \left(w' \psi'' + 2w'' \psi'\right) + \frac53 (w')^2 \psi'
\end{align*}
for $\psi \in H^4(\R)$. Consequently, integrating by parts and applying the Cauchy--Schwarz inequality, we obtain a constant $C_2 > 0$ such that
\begin{align} \label{e:CS1}
\begin{split}
\langle \A_\gamma \vt_2,z-w'\psi\rangle_{L^2(\R)} &= \langle \A_\gamma (z - w'\psi),\vt_2\rangle_{L^2(\R)}\\ 
&\leq C_2 \left(\|z\|_{H^2(\R)} + \|\psi'\|_{H^1(\R)} \right)\|\vt_2\|_{H^2(\R)}
\end{split}
\end{align}
for $z,\vt_2,\psi \in H^4(\R)$. On the other hand, by Proposition~\ref{prop:coercivity} there exists, for each $\xi_1 \in (0,\frac{\pi}{\ell})$ sufficiently small, a constant $\alpha > 0$ such that
\begin{align} \label{e:CS2}
\langle \A_\gamma (\vt_1 - w'\psi),\vt_1 - w'\psi\rangle_{L^2(\R)} \geq 2\alpha \left(\|\vt_1\|_{H^2(\R)}^2 + \|\psi'\|_{H^k(\R)}^2\right)
\end{align}
for all $\vt_1\in H^4(\R)$ and $\psi \in H^k(\R) \cap H^4(\R)$ satisfying conditions~(i)-(iii). Combining~\eqref{e:CS1} and~\eqref{e:CS2} with Lemma~\ref{lem:mod_energy} and Young's inequality yields a constant $C > 0$ such that~\eqref{e:descoer} holds for $\vt_1,\vt_2 \in H^4(\R)$ and $\psi \in H^k(\R)\cap H^4(\R)$ satisfying conditions~(i)-(iv). The general case with $\vt_1,\vt_2 \in H^2(\R)$ and $\psi \in H^k(\R)$ follows by density.
\end{proof}

\subsection{Choice of modulation function} \label{sec:choice_phase_modulation}

Let $v_0 \in H^3(\R)$. Denote by $v \in \smash{C\big(\R,H^3(\R)\big)} \cap\smash{C^1\big(\R,L^2(\R)\big)}$ the associated solution to~\eqref{e:unmod_pert_eq}, established in Proposition~\ref{prop:globalwellpert}. We now make a judicious choice of a smooth modulation function $\psi \colon \R \times [0,T) \to \R$ and decompose the inverse-modulated perturbation~\eqref{e:def_inv_mod_2} as 
\begin{align} \label{e:decompvt}
\vt(t) = \vt_1(t) + \vt_2(t)
\end{align} 
for $t \in [0,T)$. The modulation function and decomposition are chosen so that $\vt_1(t)$ and $\psi(t)$ satisfy both~\eqref{e:prop_mod} and the conditions (i)-(iii) in Corollary~\ref{cor:lower_bound_energy}, and $\vt_2(t)$ is a residual term. This construction allows us to apply the coercivity estimate from Corollary~\ref{cor:lower_bound_energy} to control $\vt_1(t)$ and $\psi_x(t)$ in the forthcoming nonlinear argument, while $\vt_2(t)$ and $\psi_t(t)$ can be estimated directly from their integral representations.

Our starting point for defining the modulation function $\psi(t)$ and the residual $\vt_2(t)$ is the Bloch transform of the Duhamel representation~\eqref{e:duhvt}, which reads
\begin{align} \label{e:duhvt_bloch}
\begin{split}
[\mathcal{B}\vt_1(t)](\xi) + [\mathcal{B}\vt_2(t)](\xi) - w'\check{\psi}(\xi,t) &= \eu^{t\El(\xi)}\check{v}_0(\xi)\\
&\qquad + \, \int_0^t \eu^{(t-s)\El(\xi)} \mathcal{B}[\mathcal{N}(\vt(s),\nabla \psi(s))](\xi) \ds
\end{split}
\end{align}
for $t \in [0,T)$ and $\xi \in [-\frac\pi\ell,\frac\pi\ell)$. Let $I$ be as in Theorem~\ref{thm:diffusive_intro} and take $\gamma \in I$. Moreover, let $\xi_1 \in \smash{(0,\frac{\pi}{\ell})}$ be so small that Corollary~\ref{cor:lower_bound_energy} applies. Using~\eqref{e:eigenfunction_Bloch} and the fact that the rank-one spectral projection $P_\gamma(\xi)$ is orthogonal, we note that the condition~(i) of Corollary~\ref{cor:lower_bound_energy} is equivalent to $\smash{\langle [\mathcal{B}\vt_1(t)](\xi), P_\gamma(\xi) w'\rangle_{L^2(0,\ell)}} = 0$ for all $\xi \in (-\xi_1,\xi_1)$. Hence, abbreviating the right-hand side of~\eqref{e:duhvt_bloch} by $F(\xi,t)$ and taking the inner product with $P_\gamma(\xi) w'$, we find that conditions~(i) and~(ii) of Corollary~\ref{cor:lower_bound_energy} can hold only if we have
\begin{align*}
\frac{\left\langle[\mathcal{B}\vt_2(t)](\xi),P_\gamma(\xi) w'\right\rangle_{L^2(0,\ell)}}{\|P_\gamma(\xi)w'\|_{L^2(0,\ell)}^2} 
- \check{\psi}(\xi,t) &= \frac{\left\langle F(\xi,t),P_\gamma(\xi) w'\right\rangle_{L^2(0,\ell)}}{\|P_\gamma(\xi)w'\|_{L^2(0,\ell)}^2}
\end{align*}
for $\xi \in (-\xi_1,\xi_1)$. Returning to physical space, now motivates the following definitions
\begin{align} \label{e:def_psi}
\psi(t) &= \chi(t) s_p(t) v_0 + \int_0^t \chi(t-s) s_p(t-s) \mathcal{N}(\vt(s),\nabla \psi(s)) \ds,\\
\vt_2(t) &= s_r(t) v_0 + \int_0^t s_r(t-s) \mathcal{N}(\vt(s),\nabla \psi(s)) \ds \label{e:def_vt2}
\end{align}
for $t \in [0,T)$, where the linear operators $s_p(t), s_r(t) \colon L^2(\R) \to L^2(\R)$ are defined via their Bloch transforms
\begin{align*}
\left[\mathcal{B} s_p(t) v\right](\xi) &= -\mathbbm{1}_{(-\xi_1,\xi_1)}(\xi) \frac{\left\langle \eu^{t \El(\xi)} \check{v}(\xi), P_\gamma(\xi) w'\right\rangle_{L^2(0,\ell)}}{\|P_\gamma(\xi)w'\|_{L^2(0,\ell)}^2},\\
\left[\mathcal{B} s_r(t) v\right](\xi) &= (1-\chi(t)) \mathbbm{1}_{(-\xi_1,\xi_1)}(\xi) P_\gamma(\xi) \eu^{t\El(\xi)} \check{v}(\xi)
\end{align*}
for $\xi \in [-\frac\pi\ell,\frac\pi\ell)$ and $t \geq 0$. Here, $\mathbbm{1}_{(-\xi_1,\xi_1)}$ denotes the indicator function of the interval $(-\xi_1,\xi_1)$, ensuring that also condition~(iii) of Corollary~\ref{cor:lower_bound_energy} is satisfied. Moreover, $\chi \colon [0,\infty) \to [0,1]$ is a smooth temporal cut-off function satisfying $\chi(t) = 0$ for $t \in [0,1]$ and $\chi(t) = 1$ for $t \in [2,\infty)$. Its presence ensures that formula~\eqref{e:def_psi} defines $\psi(t)$ iteratively, rather than implicitly as the solution of a fixed-point problem; see the paragraph proceeding Proposition~\ref{prop:psi_prop} for details. 

We show that $\vt_1(t), \psi(t)$, and $\vt_2(t)$ are well defined through~\eqref{e:decompvt},~\eqref{e:def_psi}, and~\eqref{e:def_vt2} and that they satisfy~\eqref{e:prop_mod} and conditions (i)-(iii) in Corollary~\ref{cor:lower_bound_energy}. To this end, we first establish bounds on the linear propagators $s_p(t)$ and $s_r(t)$.

\begin{lemma}[Linear estimates on low-frequency propagators] \label{lem:linear}
Fix $j,k,l \in \N_0$. There exist constants $C,M > 0$ such that
\begin{align*}
\big\|\partial_t^j \partial_x^k s_p(t) \partial_x^l z\big\|_{L^2(\R)} \leq C\eu^{M t} \|z\|_{L^2(\R)}, \qquad \big\|\partial_t^j \partial_x^k s_r(t) \partial_x^l z\big\|_{L^2(\R)} \leq C\|z\|_{L^2(\R)}
\end{align*}
for all $z \in H^l(\R)$ and $t \geq 0$.
\end{lemma}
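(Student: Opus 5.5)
The plan is to work entirely on the Bloch side, using Parseval's identity for the Floquet--Bloch transform (Appendix~\ref{appendixFB}) and the relations $\mathcal{B}(\partial_x z)(\xi) = (\partial_x + \iu\xi)\check{z}(\xi)$ and $\mathcal{B}(\partial_x^k s_p(t) z)(\xi) = (\iu\xi)^k [\mathcal{B}s_p(t)z](\xi)$. The latter holds because $[\mathcal{B}s_p(t)z](\xi)$ is constant in $x$. Both propagators are supported on the compact frequency set $(-\xi_1,\xi_1)$ with $\xi_1 \leq \pi/\ell$. On this set $|\xi|$ is bounded, so powers $(\iu\xi)^k$ and the time cut-off do not affect the estimates. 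The only real work is therefore to absorb the derivatives $\partial_x^l$ acting on $z$ and $\partial_x^k$ acting on the output, and to control time derivatives.

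For $s_p$, write $[\mathcal{B}s_p(t)\partial_x^l z](\xi) = -\mathbbm{1}_{(-\xi_1,\xi_1)}(\xi)\,\langle \eu^{t\El(\xi)}(\partial_x+\iu\xi)^l \check z(\xi), P_\gamma(\xi)w'\rangle / \|P_\gamma(\xi)w'\|^2$. The key idea is to move everything onto the smooth function $P_\gamma(\xi)w'$ via adjoints. First, $(\partial_x+\iu\xi)^l$ commutes with neither $\eu^{t\El(\xi)}$ nor its adjoint, so I would handle it as follows. Since $P_\gamma(\xi)w'$ lies in the range of the rank-one projection, it is a smooth eigenfunction of $\A_\gamma(\xi)$, analytic in $\xi$. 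Hence $q(\xi) := P_\gamma(\xi)w'$ is bounded in every $H^m_\per(0,\ell)$ uniformly for $|\xi|\le\xi_1$. Next, I would use the adjoint group: $\langle \eu^{t\El(\xi)} g, q\rangle = \langle g, \eu^{t\El(\xi)^*} q\rangle$. Here $\El(\xi)^* = -\A(\xi)(\partial_x+\iu\xi)$ is again a third-order operator with smooth periodic coefficients, so Lemma~\ref{lem:semigroup} (with the same proof) gives $\|\eu^{t\El(\xi)^*}\|_{B(H^l_\per)} \le \eu^{M t}$. Integrating by parts $l$ times then yields the bound
\[
|\langle (\partial_x+\iu\xi)^l\check z, \eu^{t\El(\xi)^*}q\rangle| \le \|\check z(\xi)\|_{L^2}\, \eu^{Mt}\|q(\xi)\|_{H^l}.
\]
Combined with the lower bound~\eqref{e:eigenfunction_Bloch} and Parseval's identity, this gives the $s_p$ estimate for $j=0$. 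Time derivatives produce factors $\El(\xi)^*$ applied to $\eu^{t\El(\xi)^*}q$, which cost $3j$ more derivatives of $q$; these are harmless since $q$ is smooth.

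For $s_r$, the claimed bound is uniform in $t$, and I would exploit energy conservation on the low-frequency eigenspace. The range of $P_\gamma(\xi)$ is spanned by the Bloch eigenfunction, and by the commuting-flow property (the Bloch eigenfunctions of $\mathcal{J}\A_\gamma$ coincide with those of $\El$, as used for Theorem~\ref{thm:diffusive_intro}) it is invariant under $\El(\xi)$, with purely imaginary eigenvalue $\iu\omega(\xi)$. Then $P_\gamma(\xi)\eu^{t\El(\xi)} = \eu^{t\El(\xi)}P_\gamma(\xi)$ acts on this one-dimensional range as multiplication by $\eu^{\iu\omega(\xi)t}$, which has modulus one. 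More robustly, I would write $P_\gamma(\xi)\eu^{t\El(\xi)} g = \langle g, \eu^{t\El(\xi)^*}q\rangle q/\|q\|^2$ and use that $\eu^{t\El(\xi)^*}$ preserves the span of $q$ with unimodular factor. The main obstacle is justifying this invariance rigorously, i.e.\ showing that $\El(\xi)$, and not only $\A_\gamma(\xi)$, leaves $\ran P_\gamma(\xi)$ invariant. I would derive it from the identity $\A_\gamma(\xi)\El(\xi) = \El(\xi)^*\A_\gamma(\xi)$, or equivalently from the commutation of $\mathcal{J}\A_\gamma$ with $\El$ coming from the Lenard relation~\eqref{e:Lenard}, together with simplicity of $\lambda_c(\xi)$. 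Once invariance is in hand, the derivative bookkeeping is identical to the $s_p$ case: $(\partial_x+\iu\xi)^k q$ is bounded, $\partial_x^l$ is moved onto $q$, and $\partial_t^j$ produces bounded factors $(\iu\omega(\xi))^j$ together with derivatives of $1-\chi$. This yields $\|\partial_t^j\partial_x^k s_r(t)\partial_x^l z\|_{L^2(\R)}\le C\|z\|_{L^2(\R)}$ uniformly in $t \geq 0$.
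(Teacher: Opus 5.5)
Your treatment of $s_p$ is correct and is essentially the paper's argument. You move $(\partial_x+\iu\xi)^l$, the powers $\El(\xi)^j$ and the group onto the smooth vector $q(\xi)=P_\gamma(\xi)w'$ by taking adjoints, then use~\eqref{e:eigenfunction_Bloch} and Parseval. (Note that $\El(\xi)^*=-A(\xi)(\partial_x+\iu\xi)$, with $A$ rather than $\A$.)

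The $s_r$ part rests on a false claim. $\ran P_\gamma(\xi)=\textnormal{span}\{q(\xi)\}$ is \emph{not} invariant under $\El(\xi)$ or $\El(\xi)^*$ for any $\xi\in(-\xi_1,\xi_1)\setminus\{0\}$. The commuting-flow property makes $\El(\xi)$ commute with $\mathcal{J}\A_\gamma$, whose eigenfunctions are the $\phi_j(\xi)$. But $P_\gamma(\xi)$ is a spectral projection of $\A_\gamma(\xi)$ itself, and by~\eqref{e:action_Elgamma} that operator sends $\phi_j(\xi)$ to a multiple of $\tilde\phi_j(\xi)$.

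Concretely, conservation of $\langle\A_\gamma(\xi)v,v\rangle$ along $v_t=\El(\xi)v$ gives $\A_\gamma(\xi)\El(\xi)=-\El(\xi)^*\A_\gamma(\xi)$; the sign in your identity is off. Suppose $\A_\gamma(\xi)q=\mu q$ with $\mu=\lambda_c(\xi)>0$ and $\El(\xi)q=\omega q$. Then $\El(\xi)^*q=-\omega q$, and hence
\[
0=\big(\El(\xi)+\El(\xi)^*\big)q=[\partial_x+\iu\xi,A(\xi)]\,q=-w'q .
\]
Since $w'$ has only isolated zeros, this forces $q=0$, a contradiction. So there is no unimodular factor. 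The degenerate lower bound $\langle\A_\gamma(\xi)z,z\rangle\geq\theta_0\xi^2\|z\|^2$ alone only yields bounds on $\eu^{t\El(\xi)}$ that degenerate as $\xi\to0$. Your route to a time-uniform bound therefore fails as written.

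You do not need any time-uniform mechanism. You dismissed the time cut-off as harmless, but it is exactly what makes the $s_r$ bound trivial. $s_r(t)$ carries the factor $1-\chi(t)$, which vanishes on $[2,\infty)$, so $s_r(t)\equiv0$ for $t\geq2$.

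Write the rank-one projection as $P_\gamma(\xi)y=\langle y,q(\xi)\rangle_{L^2(0,\ell)}\,q(\xi)/\|q(\xi)\|_{L^2(0,\ell)}^2$. This gives $[\mathcal{B}s_r(t)z](\xi)=(\chi(t)-1)\,q(\xi)\,[\mathcal{B}s_p(t)z](\xi)$. Then $\partial_x^k$ acts as $(\partial_x+\iu\xi)^k$ on the smooth, uniformly bounded $q(\xi)$, and $\partial_t^j$ distributes over $\chi-1$ and $s_p$. Your $s_p$ estimate, used only for $t\in[0,2]$, then gives a uniform constant after absorbing $\eu^{2M}$. This is precisely how the paper concludes.
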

\begin{proof}
It follows from the properties of the Floquet--Bloch transform (see Appendix~\ref{appendixFB}) that
\begin{align*}
\partial_t^j \mathcal{B} \partial_x^k s_p(t) \partial_x^l z = (\iu \xi)^k \mathbbm{1}_{(-\xi_1,\xi_1)}(\xi) \frac{(-1)^{l+1} \left\langle \check{z}(\xi), (\partial_x + \iu \xi)^l \left(\El(\xi)^j \eu^{t \El(\xi)}\right)^* P_\gamma(\xi) w'\right\rangle_{L^2(0,\ell)}}{\|P_\gamma(\xi)w'\|_{L^2(0,\ell)}^2}
\end{align*}
for $\xi \in [-\frac\pi\ell,\frac\pi\ell)$, $z \in H^l(\R)$, and $t \geq 0$. Noting that the eigenfunction $P_\gamma(\xi) w'$ of the Bloch operator $\A_\gamma(\xi)$ lies in $H^m_\per(0,\ell)$ for each $m \in \N$, using~\eqref{e:eigenfunction_Bloch}, and applying  Lemma~\ref{lem:semigroup}, we obtain constants $C_1,M > 0$ such that
\begin{align*}
\left\|(\partial_x + \iu \xi)^l \left(\El(\xi)^j \eu^{t \El(\xi)}\right)^* P_\gamma(\xi) w'\right\|_{L^2(0,\ell)} \leq C_1 \eu^{Mt}
\end{align*}
for all $\xi \in (-\xi_1,\xi_1)$ and $t \geq 0$. Combining the latter with~\eqref{e:eigenfunction_Bloch}, Parseval's identity, and the Cauchy--Schwarz inequality, we arrive at the desired bound on $s_p(t)$. Finally, observing that 
\begin{align*}
P_\gamma(\xi) y = \frac{\langle y, P_\gamma(\xi) w'\rangle_{L^2(0,\ell)}}{\|P_\gamma(\xi) w'\|_{L^2(0,\ell)}^2} P_\gamma(\xi)w'
\end{align*}
for $y \in L^2_\per(0,\ell)$ and $\xi \in (-\xi_1,\xi_1)$, we obtain
\begin{align*}
\left[\mathcal{B} s_r(t)z\right](\xi) = (\chi(t)-1) P_\gamma(\xi) w' \left[\mathcal{B} s_p(t)z\right](\xi)
\end{align*}
for all $z \in L^2(\R)$, $\xi \in [-\frac\pi\ell,\frac\pi\ell)$, and $t \geq 0$. Hence, the desired bound for $s_r(t)$ follows readily from the one for $s_p(t)$ upon using the estimate~\eqref{e:eigenfunction_Bloch} and the fact that $1-\chi$ vanishes on $[2,\infty)$.
\end{proof}

We argue that~\eqref{e:def_psi} provides an iterative definition of $\psi(t)$ as long as $\|\psi_x(t)\|_{L^\infty(\R)} \leq \frac12$. Since $\chi(t) = 0$ for all $t \in [0,1]$, the modulation function $\psi(t)$ vanishes identically on $[0,1]$. Now suppose that $\psi$ has been defined on $[0,n]$ for some $n \in \N$ and satisfies $\|\psi_x(s)\|_{L^\infty(\R)} < \frac12$ for all $s \in [0,n]$. Then, for $t \in [0,1]$, we define $\psi(n+t)$ through~\eqref{e:def_psi}. This is possible because, thanks to the fact that $\chi$ vanishes on $[0,1]$, the right-hand side depends only on $\vt|_{[0,n]}$ and $\psi|_{[0,n]}$.

The next result shows that the iterative definition of $\psi(t)$ yields the desired properties~\eqref{e:prop_mod} and provides an integral representation for the temporal derivative $\psi_t(t)$. Moreover, $\psi(t)$ and $\vt_1(t)$ satisfy conditions (i)-(iii) in Corollary~\ref{cor:lower_bound_energy}.

\begin{proposition}[Properties of the modulation function] \label{prop:psi_prop}
There exists $T \in (0,\infty]$ such that the following hold:
\begin{itemize}
\item[(i)] The modulation function $\psi \colon \R \times [0,T) \to \R$, given by~\eqref{e:def_psi}, is smooth and satisfies~\eqref{e:prop_mod}. Moreover, its Bloch transform $\smash{\check{\psi}}(\xi,t)$ is a constant function for each $\xi \in [-\frac{\pi}{\ell},\frac{\pi}{\ell})$ and $t \in [0,T)$, which vanishes identically for all $\xi \in [-\frac{\pi}{\ell},\frac{\pi}{\ell}) \setminus (-\xi_1,\xi_1)$. Finally, $T < \infty$ implies that
\begin{align} \limsup_{t \uparrow T} \|\psi_x(t)\|_{L^\infty(\R)} = \frac12. \label{e:blowup3}\end{align}
\item[(ii)] The inverse-modulated perturbation, given by~\eqref{e:def_inv_mod_2}, satisfies
\begin{align} \vt \in C\big([0,T),H^3(\R)\big) \cap C^1\big([0,T),L^2(\R)\big), \qquad \vt(0) = v_0. \label{e:regvt2}\end{align} 
and obeys the quasilinear equation~\eqref{e:modperteq} and the associated Duhamel formula~\eqref{e:duhvt} for all $t \in [0,T)$. 
\item[(iii)] The inverse-modulated perturbation decomposes as~\eqref{e:decompvt}, where
\begin{align}
\vt_1,\vt_2 \in C\big([0,T),H^3(\R)\big) \cap C^1\big([0,T),L^2(\R)\big) \label{e:reg_vt1}
\end{align}
and we have $[\mathcal{B}\vt_1(t)](\xi) \in \ker(P_\gamma(\xi))$ for all $t \in [0,T)$ and $\xi \in (-\xi_1,\xi_1)$.
\item[(iv)] It holds
\begin{align} \label{e:int_psit}
\begin{split}
\psi_t(t) &= s_p(0)\bigg( \El \vt(t) + \partial_x \left(w'\psi_{xx}(t) + 2 w''\psi_x(t)\right) - \partial_t s_r(t)v_0\\
&\qquad - \int_0^t \partial_t s_r(t-s) \mathcal{N}(\vt(s),\nabla \psi(s))\ds\bigg)
\end{split}
\end{align}
for $t \in [0,T)$. 
\end{itemize}
\end{proposition}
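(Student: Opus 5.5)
I would prove the four items in the order (i), (ii), (iii), (iv), running an induction on the unit time intervals $[n,n+1]$ that mirrors the iterative definition of $\psi$ described just before the statement.

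\emph{Item (i).} Let $T$ be the supremum of times up to which the iteration can be continued with $\|\psi_x(s)\|_{L^\infty(\R)}<\frac12$. Suppose $\psi$ is smooth on $[0,n]$ with values in every $H^k(\R)$ and satisfies~\eqref{e:prop_mod} there. Then Corollary~\ref{cor:reg_vt} gives $\vt\in C([0,n],H^3(\R))$, so by Lemma~\ref{lem:nonlinear_bounds} the residual $\mathcal{N}(\vt,\nabla\psi)$ lies in $C([0,n],H^{-1}(\R))$.

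Since $\chi$ vanishes on $[0,1]$, for $t\in[n,n+1]$ the integrand in~\eqref{e:def_psi} only involves $s\in[0,t-1]\subset[0,n]$. The kernel $\chi(t-s)s_p(t-s)$ is smooth in $t$. By Lemma~\ref{lem:linear}, taking $l=1$ to absorb the $\partial_x$ in front of $\mathcal{Q}_2$, it maps $L^2(\R)$ and $\partial_xL^2(\R)$ into $H^k(\R)$ for every $k$, with all time derivatives bounded by $C\eu^{Mt}$. Differentiating under the integral then gives $\psi\in C^j([0,n+1],H^k(\R))$ for all $j,k$. Smoothness in $x$ follows from Sobolev embedding.

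The Bloch structure of $\psi$ is read off directly from the definition of $s_p$. The Bloch transform of $s_p(t)z$ is a scalar multiple of $\mathbbm{1}_{(-\xi_1,\xi_1)}(\xi)$, so it is constant in $x$ and vanishes outside $(-\xi_1,\xi_1)$. Integrating in $s$ preserves this.

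For the blow-up alternative, note that $t\mapsto\|\psi_x(t)\|_{L^\infty(\R)}$ is continuous on $[0,T)$. If $T<\infty$ but $\limsup_{t\uparrow T}\|\psi_x(t)\|_{L^\infty(\R)}<\frac12$, the same argument extends $\psi$ past $T$ (the right-hand side at $T$ only needs data on $[0,T-1]$), contradicting maximality. This gives~\eqref{e:blowup3}.

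\emph{Item (ii).} The regularity~\eqref{e:regvt2} and the identity $\vt(0)=v_0$ follow from Corollary~\ref{cor:reg_vt}. The equation~\eqref{e:modperteq} holds by the derivation in Appendix~\ref{appendixPertEq}. Integrating it with Lemma~\ref{lem:semigroup}, using $\psi(0)=0$, yields~\eqref{e:duhvt}.

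\emph{Item (iii).} Define $\vt_2$ by~\eqref{e:def_vt2} and set $\vt_1=\vt-\vt_2$. Regularity of $\vt_2$ follows from Lemma~\ref{lem:linear} exactly as for $\psi$, and $\vt_1$ inherits it. To check $\mathcal{B}\vt_1(t)(\xi)\in\ker P_\gamma(\xi)$, take the Bloch transform of~\eqref{e:duhvt} and pair it with $P_\gamma(\xi)w'$ for $|\xi|<\xi_1$.

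By the rank-one identity in the proof of Lemma~\ref{lem:linear}, $\mathcal{B}s_r(t)z=(\chi(t)-1)P_\gamma(\xi)w'\,\mathcal{B}s_p(t)z$. Also $\langle w',P_\gamma w'\rangle=\|P_\gamma w'\|^2$, because $P_\gamma(\xi)$ is an orthogonal projection. Using these two facts, the $\chi$ and $1-\chi$ contributions add up to exactly $\langle F(\xi,t),P_\gamma(\xi)w'\rangle$. This is precisely the relation displayed before~\eqref{e:def_psi}, so $\langle\mathcal{B}\vt_1(t)(\xi),P_\gamma(\xi)w'\rangle=0$, which is equivalent to the kernel condition.

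\emph{Item (iv).} This is the main obstacle. The difficulty is that differentiating~\eqref{e:def_psi} naively produces $\mathcal{N}$ at time $t$ together with $\El$ acting on the integral, and we need to trade these back for $\vt(t)$.

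First apply $s_p(0)$ to the Duhamel formula~\eqref{e:duhvt}. Since $s_p(0)$ annihilates $\vt_1$ and $s_p(0)w'\psi=-\psi$ (by the Bloch structure from item (i)), this gives
\[
\psi(t)=s_p(0)\bigl(\vt(t)-\vt_2(t)\bigr)\cdots,
\]
more precisely $\psi=-s_p(0)\bigl(\eu^{t\El}v_0+\int_0^t\eu^{(t-s)\El}\mathcal{N}\ds\bigr)+s_p(0)\vt_2$. Next differentiate in $t$ and use~\eqref{e:modperteq}: we have $\partial_t(\vt-w'\psi)=\El(\vt-w'\psi)+\mathcal{N}$, so applying $s_p(0)$ gives
\[
-\psi_t = s_p(0)\bigl[\El\vt-\El(w'\psi)+\mathcal{N}-\partial_t\vt_2\bigr].
\]
Differentiating~\eqref{e:def_vt2} gives $\partial_t\vt_2=\partial_ts_r(t)v_0+s_r(0)\mathcal{N}+\int_0^t\partial_ts_r(t-s)\mathcal{N}\ds$. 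Since $\chi(0)=0$, we have $s_r(0)=P$ restricted to low frequencies, and $s_p(0)s_r(0)=s_p(0)$ because $s_p(0)$ only sees the $P_\gamma w'$ component. Hence the $\mathcal{N}(t)$ terms cancel.

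It remains to handle $\El(w'\psi)$. The paper's formula contains $\partial_x(w'\psi_{xx}+2w''\psi_x)$, so I expect to compute
\[
\El(w'\psi)=-\partial_x\bigl(w'\psi_{xx}+2w''\psi_x\bigr)+\psi\,\El w'-\cdots,
\]
using $\El w'=\partial_x(Aw')=0$. This gives $\El(w'\psi)=\partial_x(A(w'\psi))=-\partial_x(w'\psi_{xx}+2w''\psi_x)$. Collecting terms, and being careful with signs, should yield~\eqref{e:int_psit}. I expect the sign and identity bookkeeping here to be the delicate point, and I would verify it on the Bloch side.
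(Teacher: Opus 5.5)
Your proof follows the paper's argument essentially step for step. You use induction over unit time intervals based on $\chi|_{[0,1]}\equiv 0$, read off the Bloch structure of $\psi$ from $s_p$, and pair the Bloch-transformed Duhamel formula with $P_\gamma(\xi)w'$ to obtain $[\mathcal{B}\vt_1(t)](\xi)\in\ker(P_\gamma(\xi))$. For item (iv) you apply $s_p(0)$ to \eqref{e:modperteq}. Items (i)--(iii) are fine. Your remark that differentiating \eqref{e:def_vt2} produces $s_r(0)\mathcal{N}(\vt(t),\nabla\psi(t))$ is in fact more accurate than the paper's \eqref{e:der_vt2}. Here $s_r(0)$ is the low-frequency projection and $s_p(0)s_r(0)=s_p(0)$. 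The paper writes $\mathcal{N}(\vt(t),\nabla\psi(t))$ instead, which is harmless there because only $s_p(0)\partial_t\vt_2$ is used.

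The one real error is a sign in (iv), which you should fix rather than leave to later bookkeeping. You correctly state $s_p(0)(w'\psi)=-\psi$. You also need $s_p(0)\partial_t\vt_1=0$; you use it implicitly, and it follows from (iii), $\vt_1\in C^1([0,T),L^2(\R))$, and the $t$-independence of $P_\gamma(\xi)$. Together these give $s_p(0)\partial_t(\vt-w'\psi)=s_p(0)\partial_t\vt_2+\psi_t$. Applying $s_p(0)$ to \eqref{e:modperteq} therefore yields
\[
\psi_t = s_p(0)\left(\El\vt-\El(w'\psi)+\mathcal{N}(\vt,\nabla\psi)-\partial_t\vt_2\right),
\]
not $-\psi_t=\ldots$ as in your display.

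The same slip affects the undifferentiated identity. Set $F(t)=\eu^{t\El}v_0+\int_0^t\eu^{(t-s)\El}\mathcal{N}(\vt(s),\nabla\psi(s))\ds$. Then the correct identity is $\psi(t)=s_p(0)\left(F(t)-\vt_2(t)\right)$, whereas your version has the opposite sign. You can confirm this directly from the definitions via $s_p(t)=s_p(0)\eu^{t\El}$ and $s_p(0)s_r(t)=(1-\chi(t))s_p(t)$. Your preceding line $\psi(t)=s_p(0)(\vt(t)-\vt_2(t))\cdots$ is vacuous, since $s_p(0)\vt=s_p(0)\vt_2$. Carried through as written, your computation would give the negative of \eqref{e:int_psit}.

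With the sign corrected, the rest goes through:
\begin{itemize}
\item[(a)] Insert $\partial_t\vt_2=\partial_t s_r(t)v_0+s_r(0)\mathcal{N}(\vt(t),\nabla\psi(t))+\int_0^t\partial_t s_r(t-s)\mathcal{N}(\vt(s),\nabla\psi(s))\ds$.
\item[(b)] Cancel the $\mathcal{N}(\vt(t),\nabla\psi(t))$ terms using $s_p(0)s_r(0)=s_p(0)$.
\item[(c)] Use $A(w'\psi)=-(w'\psi_{xx}+2w''\psi_x)$, which follows from $Aw'=0$, so that $-\El(w'\psi)=\partial_x(w'\psi_{xx}+2w''\psi_x)$.
\end{itemize}
This gives exactly \eqref{e:int_psit}.
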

\begin{proof}
Since $\chi$ vanishes on $[0,1]$, we have $\psi(t)\equiv 0$ and $\vt(t)=v(t)$ for all $t \in [0,1]$. Using Proposition~\ref{prop:globalwellpert}, it is clear that $\psi$ and $\vt$ satisfy~\eqref{e:prop_mod} and~\eqref{e:regvt2} with $[0,T)$ replaced by $[0,1]$. Now suppose that $\psi$ and $\vt$ are defined on $[0,n]$ for some $n \in \N$ and satisfy~\eqref{e:prop_mod} and~\eqref{e:regvt2} with $[0,T)$ replaced by $[0,n]$. By the continuous embedding $H^1(\R) \hookrightarrow L^\infty(\R)$, the nonlinearity $s \mapsto \mathcal{N}(\vt(s),\nabla \psi(s))$ belongs to $C\big([0,n],L^2(\R)\big)$. Combining this observation with Lemma~\ref{lem:linear} and the fact that $\chi$ vanishes on $[0,1]$, we may use~\eqref{e:def_psi} to extend $\psi$ to $[0,n+1]$ so that $\psi \in \smash{C^j\big([0,n+1],H^k(\R)\big)}$ for all $j,k \in \N_0$. Moreover, the definition of the propagator $s_p(t)$ immediately implies that the Bloch transform $\smash{\check{\psi}}(\xi,t)$ is a constant function in $L^2_\per(0,\ell)$ for all $\xi \in [-\frac{\pi}{\ell},\frac{\pi}{\ell})$ and $t \in [0,n+1]$, and vanishes identically whenever $\xi \in [-\frac{\pi}{\ell},\frac{\pi}{\ell}) \setminus (-\xi_1,\xi_1)$. Finally, if there exists $s_0 \in [0,n+1]$ such that $\|\psi_x(s_0)\|_{L^\infty(\R)} \geq \frac12$, then continuity implies that
\begin{align*}
T \coloneqq \min\big\{s \in [0,n+1] : \|\psi_x(s)\|_{L^\infty(\R)} = \tfrac12\big\} < \infty
\end{align*}
exists. In this case, we set $I_n = [0,T)$; otherwise we let $I_n = [0,n+1]$. Defining $\vt$ subsequently through~\eqref{e:def_inv_mod_2} and invoking Corollary~\ref{cor:reg_vt}, we conclude that $\vt$ fulfills~\eqref{e:regvt2} with $[0,T)$ replaced by $I_n$. Hence, as argued in~\S\ref{sec:inv_mod}, $\vt$ satisfies equation~\eqref{e:modperteq}, as well as the associated Duhamel formula~\eqref{e:duhvt}. The assertions (i) and (ii) now follow by induction on $n$.

Lemma~\ref{lem:linear}, together with the fact that $s \mapsto \mathcal{N}(\vt(s),\nabla \psi(s))$ belongs to $C([0,T),L^2(\R))$, shows that $\vt_2$, given by~\eqref{e:def_vt2}, is well defined and satisfies $\vt_2 \in \smash{C\big([0,T),H^k(\R)\big)} \cap \smash{C^1\big([0,T),L^2(\R)\big)}$ for all $k \in \N_0$, with temporal derivative
\begin{align} \label{e:der_vt2}
\partial_t \vt_2(t) = \partial_t s_r(t) v_0 + \int_0^t \partial_t s_r(t-s) \mathcal{N}(\vt(s),\nabla \psi(s)) \ds + \mathcal{N}(\vt(t),\nabla \psi(t))
\end{align}
for $t \in [0,T)$. Combining this with~\eqref{e:regvt2} and the decomposition~\eqref{e:decompvt}, we immediately obtain~\eqref{e:reg_vt1}. Finally, taking the inner product of~\eqref{e:duhvt_bloch} with $P_\gamma(\xi)w'$ and using the orthogonality of $P_\gamma(\xi)$, we conclude that $[\mathcal{B}\vt_1(t)](\xi) \in \ker(P_\gamma(\xi))$ for all $\xi \in (-\xi_1,\xi_1)$ and $t \in [0,T)$, thereby completing the proof of the third assertion.  

To prove the last assertion, we note that $Aw' = 0$ implies
\begin{align} \label{e:psiwkernelid}
\El(w'\psi(t)) = -\partial_x \left(w'\psi_{xx}(t) + 2w''\psi_x(t) \right)
\end{align}
for $t \in [0,T)$. Applying $s_p(0)$ to equation~\eqref{e:modperteq}, noting that $[\partial_t \mathcal{B}\vt_1(t)](\xi) \in \ker(P_\gamma(\xi))$ for $\xi \in (-\xi_1,\xi_1)$, and rearranging terms, we arrive at
\begin{align*}
\partial_t \psi(t) = s_p(0) \big(\El \left(\vt(t) - w'\psi(t)\right) + \mathcal{N}(\vt(t),\nabla \psi(t)) - \partial_t \vt_2(t)\big)
\end{align*}
for $t \in [0,T)$. Combining the latter with~\eqref{e:der_vt2} and~\eqref{e:psiwkernelid} yields the fourth assertion.
\end{proof}

\section{Nonlinear stability analysis} \label{sec:nonlinear_stab_analysis}

In this section, we first prove our main result, Theorem~\ref{t:main_result}, establishing the nonlinear modulational stability of periodic traveling-wave solutions to the KdV equation under $H^3$-localized perturbations. We then use higher-order conserved quantities arising from the KdV hierarchy to extend the result to Sobolev spaces of arbitrarily high order.

\subsection{Proof of main result}

We close a nonlinear stability argument by tracking the norms of the inverse-modulated perturbation $\vt(t)=\vt_1(t)+\vt_2(t)$ and the space-time gradient $\nabla \psi(t)$ of the modulation function. More precisely, we control $\vt_1(t)$ and $\psi_x(t)$ via the energy estimate established in Corollary~\ref{cor:lower_bound_energy}, while the norms of $\vt_2(t)$ and $\psi_t(t)$ are estimated directly from their respective integral representations~\eqref{e:def_vt2} and~\eqref{e:int_psit}.

\begin{proof}[Proof of Theorem~\ref{t:main_result}]
Fix $k \in \N$ with $k \geq 2$. Let $v_0 \in H^3(\R)$ and set $E_0 = \|v_0\|_{H^2(\R)}$. Take $\gamma \in I$, where $I$ is the interval from Theorem~\ref{thm:diffusive_intro}. Let $\xi_1 \in \smash{(0,\frac{\pi}{\ell})}$ be so small that Corollary~\ref{cor:lower_bound_energy} applies. Let $v \in \smash{C\big(\R,H^3(\R)\big) \cap C^1\big(\R,L^2(\R)\big)}$ be the solution to~\eqref{e:unmod_pert_eq} with $v(0) = v_0$, established in Proposition~\ref{prop:globalwellpert}. 

Proposition~\ref{prop:psi_prop} yields $T \in (0,\infty]$ and a smooth modulation function $\psi \colon \R \times [0,T) \to \R$ satisfying~\eqref{e:prop_mod} and~\eqref{e:int_psit}, whose Bloch transform $\smash{\check{\psi}}(\xi,t)$ is a constant function for all $\xi \in [-\frac{\pi}{\ell},\frac{\pi}{\ell})$ and $t \in [0,T)$ such that $\smash{\check{\psi}}(\xi,t) = 0$ for all $\xi \in [-\frac{\pi}{\ell},\frac{\pi}{\ell}) \setminus (-\xi_1,\xi_1)$. Moreover, we have that $T < \infty$ implies~\eqref{e:blowup3}. Finally, the inverse-modulated perturbation, given by~\eqref{e:def_inv_mod_2}, decomposes as $\vt(t) = \vt_1(t) + \vt_2(t)$, where $\vt_1,\vt_2 \in \smash{C\big([0,T),H^3(\R)\big) \cap C^1\big([0,T),L^2(\R)\big)}$ satisfy~\eqref{e:def_vt2} and $[\mathcal{B}\vt_1(t)](\xi) \in \ker(P_\gamma(\xi))$ for all $t \in [0,T)$ and $\xi \in (-\xi_1,\xi_1)$.

The template function $\eta \colon [0,T) \to \R$, defined by
\begin{align*}
\eta(t) = \sup_{s \in [0,t]} &\big(\|\vt_1(s)\|_{H^2(\R)} + \|\vt_2(s)\|_{H^2(\R)} + \|\nabla \psi(s)\|_{H^k(\R)} + \|\psi_x(s)\|_{L^\infty(\R)}\big),
\end{align*}
is continuous and nondecreasing. Our first objective is to prove the existence of an $E_0$-independent constant $C_0\geq 1$ such that, for every $t \in [0,T)$ satisfying $\eta(t)\leq \tfrac12$, we have
\begin{align} \label{e:key}
\eta(t) \leq C_0\left(E_0 + \eta(t)^{\frac32}\right), \qquad \eta(0) \leq C_0E_0.
\end{align}
In the second step, we show that, provided $E_0<(2C_0)^{-3}$, the above estimates imply that $\eta(t)\leq 2C_0E_0$ for all $t \in [0,T)$, which in turn yields $T=\infty$. Finally, we construct a smooth modulation function $\varphi \colon \R \times [0,\infty) \to \R$ from $\psi$ such that~\eqref{e:modulational_KdV} is satisfied. The estimate~\eqref{e:orbital_KdV} then follows by an application of the mean value theorem.

In order to prove~\eqref{e:key}, we take $t \in [0,T)$ such that $\eta(t) \leq \tfrac12$ and let $s \in [0,t]$. We denote by $C \geq 1$ any constant, which is independent of $s$, $t$, and $E_0$. Using $\eta(t) \leq \tfrac12$, applying Lemmas~\ref{lem:nonlinear_bounds} and~\ref{lem:linear} to the Duhamel formula~\eqref{e:def_vt2}, and recalling that $1-\chi$ vanishes on $[2,\infty)$, we bound
\begin{align} \label{e:vt2_bound}
\|\vt_2(s)\|_{H^2(\R)} \leq C\left(E_0 + \int_{s-2}^s \eta(r)^2 \dr \right) \leq C\left(E_0 + \eta(t)^2\right)
\end{align}
and
\begin{align} \label{e:init_vt2}
\|\vt_2(0)\|_{H^2(\R)} \leq CE_0.
\end{align}
Next, we employ Propositions~\ref{prop:conservation} and~\ref{prop:mod_unmod_Lyapunov_eq} and identity~\eqref{e:rep_Lyapunov} to infer
\begin{align*}
\widetilde{\Lambda}_\gamma(\vt(s),\psi(s)) = \Lambda_\gamma(v(s)) = \Lambda_\gamma(v_0) \leq CE_0^2, 
\end{align*}
where we use that $\eta(0) \leq \tfrac12$. Hence,  Corollary~\ref{cor:lower_bound_energy}, the estimates~\eqref{e:vt2_bound} and $\eta(t) \leq \tfrac12$, and Young's inequality yield
\begin{align}
\begin{split}
\|\vt_1(s)\|_{H^2(\R)}^2 + \|\psi_x(s)\|_{H^k(\R)}^2 &\leq C\left(\widetilde{\Lambda}_\gamma(\vt(s),\psi(s)) + \left(E_0 +\eta(t)^2\right)^2 +\eta(t)^3\right)\\ &\leq C\left(E_0^2 + \eta(t)^3\right). 
\end{split} \label{e:vt1_bound}
\end{align}
Furthermore, Proposition~\ref{prop:psi_prop} implies
\begin{align} \label{e:init_psi_vt1}
\psi(0) \equiv 0, \qquad \vt_1(0) = v_0 - \vt_2(0).
\end{align}
Finally, applying Lemmas~\ref{lem:nonlinear_bounds} and~\ref{lem:linear} to~\eqref{e:int_psit}, employing~\eqref{e:vt2_bound} and~\eqref{e:vt1_bound},  recalling that $1-\chi$ vanishes on $[2,\infty)$, and using that $\eta(t) \leq \tfrac12$, we arrive at
\begin{align} \label{e:psit_bound}
\|\psi_t(s)\|_{H^k(\R)} \leq C\left(E_0 + \eta(t)^{\frac32} + \int_{s-2}^s \eta(r)^2 \dr\right)\leq C\left(E_0 + \eta(t)^{\frac32}\right).
\end{align}
Combining the estimates~\eqref{e:vt2_bound},~\eqref{e:init_vt2},~\eqref{e:vt1_bound},~\eqref{e:init_psi_vt1}, and~\eqref{e:psit_bound}, recalling that $\eta(t) \leq \tfrac12$, and using the continuous embedding $H^1(\R) \hookrightarrow L^\infty(\R)$, we establish a $t$- and $E_0$-independent constant $C_0 \geq 1$ such that the key estimates in~\eqref{e:key} hold.

Next, we use~\eqref{e:key} to close a nonlinear argument. Let $\delta = (2C_0)^{-3}$ and assume that $E_0 \in (0,\delta)$. We argue by contradiction. Suppose that there exists $t \in [0,T)$ such that $\eta(t) > 2C_0E_0$. Since $\eta(0)\leq C_0E_0$ and $\eta$ is continuous, there exists $t_0 \in (0,t)$ such that $\eta(t_0)=2C_0E_0<\frac12$. Applying~\eqref{e:key} at time $t_0$, we obtain
\begin{align*}
\eta(t_0) \leq C_0\left(E_0 + (2C_0E_0)^{\frac32}\right)
< 2C_0E_0,
\end{align*}
contradicting the definition of $t_0$. Therefore, we have 
\begin{align} \label{e:etaineq}
\eta(t)\leq 2C_0E_0 < \frac12
\end{align}
for all $t \in [0,T)$, which shows that~\eqref{e:blowup3} cannot occur, implying $T=\infty$.

We now construct a smooth modulation function $\varphi \colon \R \times [0,\infty) \to \R$ such that the estimate~\eqref{e:modulational_KdV} is satisfied. Since $\|\psi_x(t)\|_{L^\infty(\R)} \leq \frac12$, the map $\mathrm{id} + \psi(\cdot,t) \colon \R \to \R$ is invertible for all $t \geq 0$. Define $\smash{\tilde{\varphi}} \colon \R \times [0,\infty) \to \R$ by $\smash{\tilde{\varphi}}(\cdot,t) = (\mathrm{id} + \psi(\cdot,t))^{-1} - \mathrm{id}$. Since $\psi$ is smooth, $\smash{\tilde{\varphi}}$ is smooth by the implicit function theorem. Moreover, since $\psi(t)$ vanishes identically at $t = 0$, so does $\smash{\tilde{\varphi}}$. Differentiating the identity $\smash{\tilde{\varphi}}(x,t) = -\psi(x + \smash{\tilde{\varphi}}(x,t),t)$ with respect to time, we obtain
\begin{align*}
\smash{\tilde{\varphi}}_t(x,t) = -\frac{\psi_t(x + \tilde\varphi(x,t),t)}{1+\psi_x(x + \tilde\varphi(x,t),t)}
\end{align*}
for $x \in \R$ and $t \geq 0$. Thus, applying Lemma~\ref{lem:equivalence} with $f = -\psi(\cdot,t)$, $f = -\psi_t(\cdot,t)/(1+\psi_x(\cdot,t))$, and $f = v \circ (\mathrm{id} + \psi(\cdot,t)) + w \circ (\mathrm{id} + \psi(\cdot,t)) - w$, while using~\eqref{e:etaineq}, we obtain a $t$- and $E_0$-independent constant $C > 0$ such that
\begin{align} \label{e:nlest0}
\|\nabla \smash{\tilde{\varphi}}(t)\|_{H^k(\R)} &\leq C \|\nabla \psi(t)\|_{H^k(\R)} \leq 2CC_0E_0,
\end{align}
and
\begin{align}
\|v + w - w(\cdot + \smash{\tilde{\varphi}}(\cdot,t))\|_{H^2(\R)} &\leq C\|\vt(t)\|_{H^2(\R)} \leq 2CC_0E_0  \label{e:nlest1}
\end{align}
for all $t \geq 0$. Hence, using the fundamental theorem of calculus and $\smash{\tilde{\varphi}}(0) \equiv 0$, we infer
\begin{align} \label{e:nlest2}
\|\smash{\tilde{\varphi}}(t)\|_{L^2(\R)} \leq 2CC_0E_0 t, 
\end{align}
and
\begin{align} \label{e:nlest3}
\begin{split}
\|\smash{\tilde{\varphi}}(t) - \smash{\tilde{\varphi}}(x_*,t)\|_{L^2(I)} &\leq \left(\int_I \int_I \int_I \left|\smash{\tilde{\varphi}}_x(y,t)\right| \left|\smash{\tilde{\varphi}}_x(z,t)\right| \dy \dz \dx\right)^{\frac12}\\ 
&\leq R \|\smash{\tilde\varphi}_x(t)\|_{L^2(\R)} \leq 2 R C C_0 E_0
\end{split}
\end{align}
for all $t \geq 0$, each interval $I \subset \R$ of size $R > 0$, and all $x_* \in I$. Define $u,\varphi \colon \R \times [0,\infty) \to \R$ by $u(x,t) = v(x-ct,t) + w(x-ct)$ and $\varphi(x,t) = \smash{\tilde\varphi}(x-ct,t)$, respectively. Then, $\varphi$ is smooth and, by Proposition~\ref{prop:globalwellpert}, $u$ is a global classical solution to~\eqref{KdV} with initial condition $u(0) = w+v_0$, satisfying~\eqref{e:regtooth}. Finally, using the translational invariance of the norm, the bound~\eqref{e:modulational_KdV} follows directly from estimates~\eqref{e:nlest0},~\eqref{e:nlest1}, and~\eqref{e:nlest2}, whereas the bound~\eqref{e:orbital_KdV} is a consequence of~\eqref{e:nlest1} and~\eqref{e:nlest3} and the mean value theorem.
\end{proof}

\begin{remark}
We stress that there are alternative definitions of $\psi(t)$ that also allow to close the nonlinear argument. Although the associated coercivity estimate is slightly more delicate, the spectral projection $P_\gamma(\xi)$ used in~\eqref{e:def_psi} may be replaced by any projection $\smash{\tilde{P}(\xi)}$, depending continuously on $\xi$ and satisfying $\smash{\tilde{P}}(0)=P_\gamma(0)$. Consequently, the modulation function $\varphi(t)$ in Theorem~\ref{t:main_result} is by no means unique.
\end{remark}

\subsection{Higher-order Sobolev spaces}

Let $u_{\text{tw}}(x,t) = w(x-ct)$ be a periodic traveling-wave solution to the KdV equation~\eqref{KdV} with wave speed $c \in \R$. Then, $w$ is a stationary solution to~\eqref{cKdV}. The Hamiltonians corresponding to the equations in the KdV hierarchy give rise to an infinite sequence of conserved quantities for~\eqref{cKdV}, each of which possesses $w$ as a critical point, cf.~\cite{Lax_1975_periodic,miura1968korteweg}. As observed in~\cite{bona2004stability}, these quantities can be used to extend orbital stability results to Sobolev spaces of arbitrarily high order. The underlying idea is that, once orbital stability has been established in lower-order Sobolev spaces, relative energies associated with these conserved quantities provide control of higher-order Sobolev norms of the perturbation in terms of lower-order norms.

However, in contrast to~\cite{bona2004stability}, our analysis only yields control of  spatiotemporal modulations of the perturbation. We therefore modulate the higher-order relative energies in the same manner as the relative energy $\Lambda_\gamma$ in~\S\ref{sec:mod_energy}. This construction produces an infinite sequence of modulated relative energies that control higher-order derivatives of the inverse-modulated perturbation.

\begin{proposition}[Higher-order modulated relative energies] \label{prop:higher_order_conserved}
Fix $s \in \N$ with $s \geq 3$. Set $X_s = \{\psi \in H^{s+1}(\R) : \|\psi'\|_{L^\infty(\R)} \leq \frac12\}$. There exists a nonlinear functional $F_s \colon H^s(\R) \times X_s \to \R$ with the following properties:
\begin{itemize}
    \item[(i)] We have
    \begin{align} 
    \label{e:hoest0}
    F_s(\vt,\psi) = F_s(v,0)\end{align}
    for all $v \in H^s(\R)$ and $\psi \in X_s$, where $\vt$ is given by~\eqref{e:invmodpert}.
    \item[(ii)] For each $R > 0$ there exists a constant $C > 0$ such that
    \begin{align} \label{e:hoest1}
    \begin{split}
    \left\|\partial_x^s \vt\right\|^2_{L^2(\R)} &\leq C\left(F_s(\vt,\psi) + \|\vt\|^2_{H^{s-1}(\R)} + \|\psi'\|_{H^{s-1}(\R)}^2\right),\\
    F_s(v,0) &\leq C \|v\|_{H^s(\R)}^2 \end{split}\end{align}
    for all $v,\vt \in H^s(\R)$ and $\psi \in X_s$ with $\|\vt\|_{H^{s-1}(\R)}, \|v\|_{H^{s-1}(\R)}, \|\psi'\|_{H^s(\R)} \leq R$.
    \item[(iii)] For each $v_0 \in H^s(\R)$ and $t \in \R$, we have
    \begin{align} \label{e:hoest3}
        F_s(v(t),0) = F_s(v_0,0),
    \end{align}
    where $v \in \smash{C\big(\R,H^s(\R)\big) \cap C^1\big(\R,H^{s-3}(\R)\big)}$ is the global classical solution to~\eqref{e:unmod_pert_eq} with $v(0) = v_0$, established in Proposition~\ref{prop:globalwellpert}.
\end{itemize}
\end{proposition}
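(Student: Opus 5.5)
We will build $F_s$ from the $(s+1)$-th member of the KdV hierarchy, mirroring the construction of $\Lambda_\gamma$ in~\S\ref{sec:energy} and its modulation in~\S\ref{sec:mod_energy}.

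\emph{Choice of functional.} Let $\mathcal{H}_s$ denote the conserved quantity of the KdV hierarchy whose density has leading-order part $\frac12 u_{(s)}^2$, where $u_{(s)} = \partial_x^s u$. The remaining terms are polynomial in $u,\dots,u_{(s-1)}$ and of lower differential weight. Using the higher Lenard recursion relations $\mathcal{J}\mathcal{H}_{n+1}'(u) = \mathcal{J}_2(u)\mathcal{H}_n'(u)$ and $\mathcal{H}'(w) = 0$, we see that $w$ is stationary for each flow. After adding suitable multiples of $\int u\,\dx$ and $\int u^2\,\dx$ (Casimirs of $\mathcal{J}$), $w$ becomes a critical point, exactly as for~\eqref{e:Euler_Lagrange_2}.

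\emph{Definition of $F_s$.} We define the relative energy $\Lambda^{(s)}(v) = \int_\R \mathcal{K}^{(s)}(w+v) - \mathcal{K}^{(s)}(w)\dx$. It is well defined on $H^s(\R)$ because $w$ is a critical point: integrating by parts removes the linear terms, as in~\eqref{e:rep_Lyapunov}. Following the change of variables~\eqref{e:subst_rule}--\eqref{e:diffexp}, now using Faà di Bruno's formula for $u^{(j)}(x+\psi(x))$, we obtain a functional $\widetilde\Lambda^{(s)}(\vt,\psi)$ with $\Lambda^{(s)}(v) = \widetilde\Lambda^{(s)}(\vt,\psi)$. We then set $F_s(\vt,\psi) := \widetilde\Lambda^{(s)}(\vt,\psi)$. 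Since $F_s(v,0) = \Lambda^{(s)}(v)$ when $\psi = 0$, property (i) is exactly this identity, proved as in Proposition~\ref{prop:mod_unmod_Lyapunov_eq}; the boundary terms vanish since $R-h^{-1}(R)\to 0$.

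\emph{Conservation, property (iii).} We follow the proof of Proposition~\ref{prop:conservation}. Approximate $v$ by regularized solutions in $C([-t_0,t_0],H^{s+2})$, which exist by the same parabolic regularization. Then compute
\[
\partial_t \Lambda^{(s)}(v_k) = \langle \mathcal{H}_s'(w+v_k) - \mathcal{H}_s'(w), \partial_t v_k\rangle .
\]
Writing $\partial_t v_k = \mathcal{J}(\mathcal{H}'(w+v_k)-\mathcal{H}'(w))$, we commute via the Lenard recursions. The result is a sum of terms $\langle \mathcal{J}_*(\cdot)g,g\rangle$ with skew-symmetric operators and $g = \mathcal{H}'(w+v_k)-\mathcal{H}'(w)$, which vanish. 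A cleaner route is to use the commuting-flow property in the form $\{\mathcal{H}_s,\mathcal{H}\}=0$ applied to the difference, with $\mathcal{H}_s'(w)=0$ handling the $w$ contribution.

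\emph{Estimates, property (ii).} The bound $F_s(v,0)\le C\|v\|_{H^s}^2$ follows from a Taylor expansion, since the first variation vanishes at $w$ and the nonlinear terms are controlled by Sobolev embedding with $\|v\|_{H^{s-1}}\le R$. For the first bound, we isolate the top-order term of $\widetilde{\mathcal K}^{(s)}$, which is
\[
\frac{1}{2(1+\psi')^{2s-1}}\bigl(\vt^{(s)} + w^{(s)} - \ldots\bigr)^2 .
\]
Every other term contains at most $s-1$ derivatives of $\vt$ and at most $s$ derivatives of $\psi$, so at most $s-1$ derivatives of $\psi'$. We subtract $\mathcal{K}^{(s)}(w)$ and note $(1+\psi')^{-(2s-1)}\ge c>0$. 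Expanding the square, the cross terms $\vt^{(s)}\cdot(\text{lower order})$ are absorbed by Young's inequality. The terms that are purely in $w$ and $\psi$ must be shown to be bounded by $C(\|\vt\|_{H^{s-1}}^2+\|\psi'\|_{H^{s-1}}^2)$. For this, note that $\widetilde\Lambda^{(s)}(0,\psi) = \Lambda^{(s)}(w\circ(\mathrm{id}+\psi)^{-1}-w)$ vanishes to first order and depends only on $\psi'$. Moreover, terms with $\psi^{(s)} = (\psi')^{(s-1)}$ appear; the derivative count must be checked so that only $\|\psi'\|_{H^{s-1}}$, not $\|\psi'\|_{H^s}$, appears quadratically. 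The hypothesis $\|\psi'\|_{H^s}\le R$ is used only to bound coefficients in $L^\infty$.

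The main obstacle is this bookkeeping in (ii): showing that the $w$-only and $\psi$-dependent terms are quadratic in $(\vt,\psi')$ in the lower norms. If a direct count fails, we will integrate by parts once to move a derivative off $\psi^{(s)}$. The linear terms cancel because they are precisely $\widetilde\Lambda^{(s)\prime}(0,0)=0$, by the chain-rule argument of Lemma~\ref{lem:mod_energy}.
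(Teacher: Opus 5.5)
Your proposal is correct and follows the paper's proof in all essentials. Both build a relative energy from a KdV-hierarchy conserved density having $w$ as a critical point. The change of variables $y = x+\psi(x)$ then gives $F_s$ with top-order density $a_s(1+\psi')^{-(2s-1)}(\partial_x^s\vt)^2$, with $\psi$ entering only through $\psi',\dots,\psi^{(s)}$, which yields (i) and (ii), and conservation follows by approximation with regularized solutions.

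The only real difference is how conservation is proved. The paper uses the local conservation law $\partial_t\mathcal{P}_s(u)=\partial_x\mathcal{X}_s(u)$ and integrates the flux difference $\mathcal{X}_s(w+v)-\mathcal{X}_s(w)$ over $\R$, which is essentially your ``cleaner route''. That route is preferable to iterating the Lenard recursions. For $s\geq 3$ the iteration ends in a skew pairing of an intermediate gradient $\mathcal{H}_m'(w+v_k)$, not of $g$. It also requires every intermediate member to be adjusted first by a multiple of $\int_\R u\dx$ so that $\mathcal{H}_m'(w)=0$.
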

\begin{proof}
We make use of the sequence of conserved quantities introduced by Gardner, Kruskal, and Miura~\cite{miura1968korteweg}, which arise as Hamiltonians of the equations in the KdV hierarchy and possess $w$ as critical point. In particular, using the characterization in~\cite[\S2]{miura1968korteweg} and~\cite[Equation (3.2)' and Theorem~3.2]{Lax_1975_periodic}, such a quantity takes the form of a nonlinear functional $G_s \colon H^s(\R) \to \R$, given by
\begin{align*}
G_s(u) = \int_\R \mathcal{P}_s(u) \dx.
\end{align*} 
Here, the density has the structure
\begin{align}
\mathcal{P}_s(u)
= a_s (\partial_x^s u)^2 + b_s (\partial_x^{s-1} u)(\partial_x^s u) + c_s (\partial_x^{s-1} u)^2 + d_s \partial_x^s u + e_s \partial_x^{s-1} u + f_s, \label{e:defPS}
\end{align}
with $a_s>0$ and $b_s$, $c_s$, $d_s$, $e_s$, and $f_s$ polynomials in $u,\partial_x u,\ldots,\smash{\partial_x^{s-2}}u$, whose coefficients are real. Here, $\mathcal{P}_s(u)$ is quadratic in $u$ and its derivatives. Moreover, if $u \colon \R \times \R \to \R$ is a sufficiently regular pointwise solution of~\eqref{cKdV} such that its spatial derivatives up to order $s+3$ exist and its temporal derivative $\partial_t u$ is $s$ times differentiable with respect to space, then the local conservation law
\begin{align} \label{e:conslaw}
\partial_t \mathcal{P}_s(u(x,t)) = \partial_x \mathcal{X}_s(u(x,t))
\end{align}
holds for all $x,t \in \R$, where $\mathcal{X}_s(u)$ is a polynomial in $u,\partial_x u, \ldots,\partial_x^{s+2} u$ with real coefficients. 

Following the approach in~\S\ref{sec:energy}, we interchange subtraction and integration in the formal difference $G_s(w+v) - G_s(v)$ and obtain a smooth nonlinear functional $Q_s \colon H^s(\R) \to \R$, given by
\begin{align}
Q_s(v) = \int_\R \mathcal{P}_s(w+v) - \mathcal{P}_s(w) \dx = \lim_{R \to \infty} \int_{-R}^R \mathcal{P}_s(w+v) - \mathcal{P}_s(w) \dx \label{e:defHS}
\end{align}
for $v \in C^\infty_c(\R)$, which is well-defined since $w$ is a critical point of $G_s$. Moreover, if $v_0 \in H^{s+4}(\R)$ and $v \in \smash{C\big(\R,H^{s+4}(\R)\big) \cap C^1\big(\R,H^{s+1}(\R)\big)}$ is the associated global classical solution to~\eqref{e:unmod_pert_eq} with $v(0) = v_0$, established in Proposition~\ref{prop:globalwellpert}, then~\eqref{e:conslaw} yields
\begin{align*}
\partial_t Q_s(v(t)) = \int_\R \partial_x(\mathcal{X}_s(w+v) - \mathcal{X}_s(w)) \dx = 0,
\end{align*}
implying conservation of the relative energy:
\begin{align} \label{e:conslaw2}
Q_s(v(t)) = Q_s(v_0)
\end{align}
for all $t \in \R$, where we used that both $w$ and $w+v(t)$ are sufficiently regular pointwise solutions to~\eqref{cKdV}. Using approximation by regularized solutions as in the proof of Proposition~\ref{prop:conservation}, cf.~\cite[Section~3.1]{erdogan2016dispersive}, we find that~\eqref{e:conslaw2} holds for all $v_0 \in H^s(\R)$ and associated global solutions $v \in \smash{C\big(\R,H^s(\R)\big) \cap C^1\big(\R,H^{s-3}(\R)\big)}$ to~\eqref{e:unmod_pert_eq} with $v(0) = v_0$, established in Proposition~\ref{prop:globalwellpert}.

Let $v \in H^s(\R)$ and $\psi \in X_s$. Set $u = w+v$. We proceed as in~\S\ref{sec:mod_energy} and perform a change of variables  to the first integral in~\eqref{e:defHS} to obtain a modulated relative energy. Using that the map $h \colon \R \to \R$ given by $h(x) = x + \psi(x)$ is strictly increasing and hence invertible, we make the change of variables $y = h(x)$, which yields~\eqref{e:subst_rule} with $\varkappa \coloneqq \mathcal{P}_s(w + v)$. By the chain rule, we have
\begin{align} \label{e:subst_rule2}
(\partial_x^j u)(x+\psi(x)) = \frac{\partial_x^j(\vt(x) + w(x))}{(1+\psi'(x))^j} + \sum_{m = 1}^{j-1} \alpha_m \partial_x^m(\vt(x) + w(x)), \qquad j = 1,\ldots,s,
\end{align}
for $x \in \R$, where the coefficients $\alpha_m$ are polynomials in $\partial_x \psi,\ldots,\partial_x^j \psi$, and $1/(1+\psi')$, cf.~\eqref{e:diffexp}. Thus, substituting~\eqref{e:subst_rule2} into $\varkappa(x+\psi(x))$ and~\eqref{e:subst_rule} into~\eqref{e:defHS}, recalling that $w$ is a critical point of $G_s$, using~\eqref{e:defPS}, and noting that $R - h^{-1}(R) \to 0$ as $R \to \pm\infty$, we arrive at
\begin{align} \label{e:hoest4}
Q_s(v) = F_s(\vt,\psi)
\end{align}
where $\vt$ is given by~\eqref{e:invmodpert} and $F_s \colon H^s(\R) \times X_s \to \R$ is a nonlinear functional of the form
\begin{align*}
F_s(\vt,\psi) = \int_\R \mathcal{Q}_s(\vt,\psi) \dx
\end{align*}
for $\vt,\psi \in C^\infty_c(\R)$ with $\|\psi'\|_{L^\infty(\R)} \leq \frac12$, with density
\begin{align*}
\mathcal{Q}_s(\vt,\psi) = \frac{a_s}{(1+\psi')^{2s-1}} (\partial_x^s \vt)^2 + \tilde{b}_s (\partial_x^{s-1} \vt)(\partial_x^s \vt) + \tilde{c}_s (\partial_x^{s-1} \vt)^2 + 
\tilde{d}_s \partial_x^s \vt + \tilde{e}_s \partial_x^{s-1} \vt + \tilde{f}_s,
\end{align*}
where $\tilde{b}_s \ldots, \tilde{f}_s$ are polynomials in $w,\partial_x w,\ldots,\partial_x^s w, \vt,\partial_x \vt,\ldots,\partial_x^{s-2} \vt$, $\partial_x \psi,\ldots,\partial_x^s \psi$, and $1/(1+\psi')$ with real coefficients. Here, $\mathcal{Q}_s(\vt,\psi)$ is quadratic in $\vt$, $\psi_x$, and their derivatives. This immediately yields~\eqref{e:hoest1} upon invoking the continuous embedding $H^1(\R)\hookrightarrow L^\infty(\R)$, together with the Cauchy--Schwarz and Young inequalities, and the fact that $\|\psi'\|_{L^\infty(\R)} \leq \frac12$ for all $\psi\in X_s$. On the other hand, setting $\psi\equiv0$ in~\eqref{e:hoest4} gives $Q_s(v)=F_s(v,0)$ for every $v\in H^s(\R)$. Applying~\eqref{e:hoest4} once more and using~\eqref{e:conslaw2}, we obtain~\eqref{e:hoest0} and~\eqref{e:hoest3}, respectively, which completes the proof.
\end{proof}

Using the higher-order modulated relative energies, established in Proposition~\ref{prop:higher_order_conserved}, we obtain the following corollary of Theorem~\ref{t:main_result}, which yields nonlinear stability in Sobolev spaces of arbitrarily high order.

\begin{corollary}[Nonlinear modulational stability in higher-order Sobolev spaces] \label{c:main_result}
Let $u_{\text{tw}}(x,t) = w(x-ct)$ be a periodic traveling-wave solution to the KdV equation~\eqref{KdV} with wave speed $c \in \R$ and profile $w\colon \R \to \R$ of fundamental period $\ell > 0$. Fix $k,m \in \N_0$. Then, there exist constants $M,\delta > 0$ such that, whenever $v_0 \in H^{m+3}(\R)$ satisfies
\begin{align}E_0 \coloneqq\|v_0\|_{H^{m+3}(\R)} \leq \delta, \label{e:initho}\end{align}
there exist a global classical solution
\begin{align*}
u \in C\big([0,\infty),H^{m+3}(\R) \oplus H^{m+3}_\per(0,\ell)\big) \cap C^1\big([0,\infty),H^{m}(\R) \oplus H^{m}_\per(0,\ell)\big)
\end{align*}
to~\eqref{KdV} with initial condition $u(0) = w+v_0$ and a smooth modulation function $\varphi \colon \R \times [0,\infty) \to \R$ such that
\begin{align} \label{e:modulational_KdV2}
\|u(t) - w(\cdot - c t + \varphi(\cdot,t))\|_{H^{m+3}(\R)} + \|\nabla \varphi(t)\|_{H^k(\R)} + \frac{\|\varphi(t)\|_{L^2(\R)}}{1+t} \leq ME_0
\end{align}
for all $t \geq 0$. Moreover, we have
\begin{align} \label{e:orbital_KdV2}
\inf_{\phi \in \R} \|u(t) - w(\cdot + \phi)\|_{H^{m+3}(x_* - R,x_* + R)} \leq M R E_0 
\end{align}
for each $t \geq 0$, $x_* \in \R$, and $R > 0$.
\end{corollary}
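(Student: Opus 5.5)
The proof will bootstrap the $H^2$-control of Theorem~\ref{t:main_result} up to $H^{m+3}$ using the higher-order modulated relative energies of Proposition~\ref{prop:higher_order_conserved}. We keep the modulation function $\psi$ constructed in Proposition~\ref{prop:psi_prop}; its choice does not depend on $m$. Since $\|v_0\|_{H^2(\R)} \leq \|v_0\|_{H^{m+3}(\R)} = E_0$, Theorem~\ref{t:main_result} (with $\delta$ possibly reduced) gives $T = \infty$ and, from its proof,
\begin{align*}
\|\vt(t)\|_{H^2(\R)} + \|\nabla\psi(t)\|_{H^{k'}(\R)} + \|\psi_x(t)\|_{L^\infty(\R)} \leq C E_0, \qquad t \geq 0.
\end{align*}
Here $k' \geq \max\{k, m+3\}$ may be chosen as large as needed, since the integer $k$ in that proof is arbitrary. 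Thus $\psi(t) \in X_s$ for every $s$, and all derivatives of $\psi_x$ are already small. Global well-posedness of $v$ in $C(\R, H^{m+3}(\R)) \cap C^1(\R, H^m(\R))$ follows from Proposition~\ref{prop:globalwellpert}. By Corollary~\ref{cor:reg_vt}, $\vt$ lies in the same class.

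Next comes an induction on $s = 3, \ldots, m+3$. The hypothesis is $\sup_{t \geq 0} \|\vt(t)\|_{H^{s-1}(\R)} \leq C_{s-1} E_0$, which for $s = 3$ is the bound above. Combining \eqref{e:hoest0}, \eqref{e:hoest3}, and \eqref{e:hoest1} gives
\begin{align*}
\|\partial_x^s\vt(t)\|_{L^2(\R)}^2 \leq C\Big(F_s(\vt(t),\psi(t)) + \|\vt(t)\|_{H^{s-1}(\R)}^2 + \|\psi_x(t)\|_{H^{s-1}(\R)}^2\Big).
\end{align*}
We then use $F_s(\vt(t),\psi(t)) = F_s(v(t),0) = F_s(v_0,0) \leq C\|v_0\|_{H^s(\R)}^2 \leq CE_0^2$. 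The constant $R$ in Proposition~\ref{prop:higher_order_conserved}(ii) is admissible because the induction hypothesis, the $\psi$-bounds, and $\|v_0\|_{H^{s-1}(\R)} \leq E_0$ are uniform in $t$. This yields $\|\vt(t)\|_{H^s(\R)} \leq C_s E_0$ for all $t \geq 0$, which closes the induction.

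Finally we pass to $\varphi$ exactly as at the end of the proof of Theorem~\ref{t:main_result}. Set $\tilde\varphi(\cdot,t) = (\mathrm{id}+\psi(\cdot,t))^{-1} - \mathrm{id}$ and apply Lemma~\ref{lem:equivalence}, now in $H^{m+3}$. This gives $\|v(t) + w - w(\cdot + \tilde\varphi(\cdot,t))\|_{H^{m+3}(\R)} \leq C\|\vt(t)\|_{H^{m+3}(\R)}$, with constants controlled by $\|\psi_x\|_{H^{m+3}}$, which is already bounded. The bounds $\|\nabla\tilde\varphi(t)\|_{H^k(\R)} \leq CE_0$ and $\|\tilde\varphi(t)\|_{L^2(\R)} \leq CE_0 t$ are as before. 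Undoing the comoving frame gives \eqref{e:modulational_KdV2}. For \eqref{e:orbital_KdV2}, we use the $H^{m+3}(I)$ analogue of \eqref{e:nlest3}, combined with the mean value theorem applied to $w(\cdot+\tilde\varphi) - w(\cdot+\tilde\varphi(x_*,t))$ together with its derivatives. The derivatives of $\tilde\varphi$ of order $1$ to $m+3$ are bounded globally in $L^2$, so only the zeroth-order term contributes the factor $R$. For $R < 1$ we bound $R$ by $1$ and use the global estimate directly.

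The main obstacle is the uniformity in Proposition~\ref{prop:higher_order_conserved}(ii). We must check that its constant depends only on the a priori bounds, which is why the $\psi$-bounds must be taken in $H^{s}$ for $\psi'$ rather than $H^{s-1}$. We also need Lemma~\ref{lem:equivalence} to hold in higher Sobolev norms with constants depending only on $\|\psi_x\|_{H^{m+3}}$. If that lemma is stated only for $H^2$, the first thing to try is to redo its chain-rule and Faà di Bruno argument at order $m+3$, using \eqref{e:subst_rule2}.
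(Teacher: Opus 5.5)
Your proposal follows the paper's own proof. You keep $\psi$ from the proof of Theorem~\ref{t:main_result} with Sobolev index at least $m+3$, and you induct on $s=3,\ldots,m+3$ using $F_s(\vt(t),\psi(t))=F_s(v(t),0)=F_s(v_0,0)$ together with \eqref{e:hoest1}. Your induction hypothesis supplies the uniform $R$ in Proposition~\ref{prop:higher_order_conserved}(ii), which the paper leaves implicit. You then transfer the bound to $u(t)-w(\cdot-ct+\varphi(\cdot,t))$ via Lemma~\ref{lem:equivalence}. That lemma is already stated for arbitrary $k$, so no new chain-rule computation is needed.

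The one step that fails is your handling of \eqref{e:orbital_KdV2} for $R<1$. There $MRE_0<ME_0$, so replacing $R$ by $1$ and using the global $H^{m+3}$ bound proves only the weaker inequality, not the claimed one.

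No argument can fix this, because the estimate is false as $R\to0$. Take $t=0$, let $x_*$ be a maximum point of $w$, and choose $v_0$ with $v_0(x_*)>0$. Since $w(x+\phi)\leq w(x_*)$ for every $\phi\in\R$ and $w'(x_*)=0$, we have
\begin{align*}
w+v_0-w(\cdot+\phi)\geq -\tfrac12\|w''\|_{L^\infty(\R)}R^2+v_0(x_*)-\|v_0'\|_{L^\infty(\R)}R\geq \tfrac12 v_0(x_*)
\end{align*}
on $(x_*-R,x_*+R)$ once $R$ is small. Hence the left-hand side of \eqref{e:orbital_KdV2} is at least $\sqrt{R/2}\,v_0(x_*)$, which exceeds $MRE_0$ for $R$ small.

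The estimate should therefore be read for $R\geq1$, as in Theorem~\ref{t:main_result}. That is all your argument, or the paper's via \eqref{e:nlest3} and the mean value theorem, actually gives: a bound of the form $C(1+R)E_0$.
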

\begin{proof}
Fix $k,m \in \N_0$ with $k \geq m+3$. Let $\delta > 0$ be as in the proof of Theorem~\ref{t:main_result}. Let $v_0 \in H^{m+3}(\R)$ satisfy~\eqref{e:initho}. Finally, let $v$, $\vt$, $\psi$, $\smash{\tilde\varphi}$, and $\varphi$ be as in the proof of Theorem~\ref{t:main_result}. 

By estimate~\eqref{e:etaineq}, there exists an $E_0$-, and $t$-independent constant $C > 0$ such that 
\begin{align} \label{e:highorder_init}
\|\vt(t)\|_{H^2(\R)} + \|\nabla \psi(t)\|_{H^k(\R)} \leq CE_0 < \frac12
\end{align}
for all $t \geq 0$. By Proposition~\ref{prop:globalwellpert} and Corollary~\ref{cor:reg_vt}, we have $v,\vt \in   \smash{C\big([0,\infty),H^{m+3}(\R)\big)} \cap \smash{C^1\big([0,\infty),H^{m}(\R)\big)}$. Thus, combining~\eqref{e:highorder_init} with Proposition~\ref{prop:higher_order_conserved} yields $E_0$- and $t$-independent constants $C_{1,2} > 0$ such that 
\begin{align} \label{e:induc_est}
\begin{split}
\|\partial_x^s \vt(t)\|_{L^2(\R)}^2 &\leq C_1\left(F_s(\vt(t),\psi(t)) + \|\vt(t)\|^2_{H^{s-1}(\R)} + \|\psi_x(t)\|_{H^{s-1}(\R)}^2\right) \\
&\leq C_2\left(E_0^2 + \|\vt(t)\|^2_{H^{s-1}(\R)}\right)
\end{split}
\end{align}
for $t \geq 0$ and $s \in \{3,\ldots,m+3\}$, where we used $F_s(\vt(t),\psi(t)) = F_s(v(t),0) = F_s(v_0,0)$. Applying estimate~\eqref{e:induc_est} inductively, while using~\eqref{e:highorder_init} and Lemma~\ref{lem:equivalence}, we obtain $E_0$- and $t$-independent constants $C_{3,4} > 0$ such that
\begin{align*}
\|v + w - w(\cdot + \smash{\tilde{\varphi}}(\cdot,t))\|_{H^{m+3}(\R)} \leq C_3\|\vt(t)\|_{H^{m+3}(\R)} \leq C_4E_0
\end{align*}
for $t \geq 0$. Combining the latter with~\eqref{e:nlest2},~\eqref{e:nlest3}, and~\eqref{e:highorder_init}, the mean value theorem, and the translational invariance of the norm, we arrive at~\eqref{e:modulational_KdV2} and~\eqref{e:orbital_KdV2}.
\end{proof}

\section{Discussion and outlook} \label{sec:discussion}

We outline several possible extensions of our approach and explore their application to Hamiltonian stability problems.

\subsection{Multiple symmetries and their interactions}

In~\S\ref{sec:intro_principle}, we described how our nonlinear stability theory applies to general Hamiltonian systems with a one-parameter symmetry group under diffusive spectral stability conditions. However, many canonical Hamiltonian models, including the nonlinear Schr\"odinger (NLS) and complex Klein--Gordon equations, are invariant under the action of symmetry groups of dimension greater than one. We expect that our approach can be extended to this setting by adapting the notion of diffusive spectral stability so that the dimension of the kernel of the second variation coincides with that of the symmetry group. Modulating the wave along each symmetry direction by a spatiotemporal modulation function then presents the additional challenge of controlling possible interactions between modulation functions in the nonlinear argument. 

Nevertheless, we expect that no interaction occurs at the level of the quadratic form associated with the modulated relative energy, since the $0$-eigenvalue of the second variation is semisimple. This should allow one to recover a coercivity estimate at the cost of a derivative in a similar way as in the present one-symmetry setting. We are currently investigating this extension in the context of 
periodic cnoidal waves in the defocusing NLS equation, which is invariant under a $2$-parameter symmetry group generated by translations and phase rotations.

\subsection{Asymptotic stability and modulational dynamics}

Our main result, Theorem~\ref{t:main_result}, shows that the perturbed solution $u(t)$ to~\eqref{KdV} remains close to a modulated periodic wave of the form $w_{\textnormal{mod}}(x,t) = w(x-ct+\varphi(x,t))$ and that the spacetime gradient $\nabla \varphi(t)$ stays small. This naturally raises the question of whether this can be strengthened to an \emph{asymptotic stability} result. That is, does $u(t)$ converge to $w_{\textnormal{mod}}(t)$ as $t \to \infty$ and can the leading-order asymptotics of $\varphi(t)$ be determined?

Although the KdV equation, when posed on the whole real line, exhibits dispersion, one cannot expect that all small initial data decay to $0$ in any $L^p(\R)$-norm as $t \to \infty$. Indeed, due to Galilean invariance, the KdV equation admits traveling solitons of arbitrarily small amplitude. In fact, like several dispersive Hamiltonian systems, the KdV equation exhibits \emph{soliton resolution}~\cite{schuur1986asymptotic,martel2001asymptotic}, meaning that, generically, sufficiently localized solutions decompose into a finite number of solitons plus a dispersively decaying radiation term as $t \to \infty$; see~\cite{jendrej2025recent} and references therein. However, for sufficiently small, polynomially localized initial data, it was shown in~\cite{Ifrim_2023_dispersive} that solutions to~\eqref{KdV} initially satisfy linear-like dispersive decay bounds. This regime persists on a quartic time scale with respect to the size of initial data, after which nonlinear effects, in particular the emergence of solitons, become relevant.

Soliton resolution also holds in the present setting of a periodic background solution. Using the inverse scattering transform, the long-time dynamics of solutions arising from polynomially localized initial data on top of a (quasi-)periodic background wave were characterized in~\cite{Mikikits_Leitner_2012_Asymptotics,Egorova_2011_Cauchy}. Although the analysis in~\cite{Mikikits_Leitner_2012_Asymptotics,Egorova_2011_Cauchy} does not yield a stability result, it shows that the solution asymptotically consists of a finite number of solitons, each propagating with its own characteristic velocity on top of the periodic background. Between each two consecutive solitons, the asymptotics of the solution is described by a spatial translate of the underlying periodic wave together with a dispersively decaying radiation term. As a consequence, the modulation function $\varphi(t)$ may develop linearly growing plateau states, consistent with the discussion in~\S\ref{sec:main_result}. In particular, asymptotic stability in the sense that $u(t)$ converges to $w_{\textnormal{mod}}(t)$ as $t \to \infty$ cannot be expected.

A description of the long-time dynamics that is more naturally aligned with the present modulational framework is provided by Whitham modulation theory. Up to translations, the periodic waves in~\eqref{KdV} form a three-parameter family, which can be parameterized by mass, momentum (i.e.~$L^2_\per(0,\ell)$-norm), and wave number; see~\eqref{e:cnoidal_form}. A multiscale expansion then formally yields a system of equations governing the slow evolution of these parameters, known as the \emph{Whitham modulation system}~\cite{Whitham_1965_non-linear}. Higher-order corrections of this Whitham system are expected to effectively describe the dynamics of the derivative $\varphi_x(t)$, which corresponds to the local wave number, on long time scales; see~\cite{rodrigues2018linear} for a linear validation. After diagonalization of its linear part, such a higher-order Whitham system consists of three nonlinearly coupled KdV equations with distinct characteristic speeds. Since components propagate with different velocities, one expects nonlinear interactions between components to be asymptotically negligible for sufficiently localized data, so that the large-time behavior is essentially governed by decoupled KdV dynamics. This heuristic suggests that the higher-order Whitham system should itself exhibit soliton resolution. In particular, one expects the local wave number $\varphi_x(t)$ to asymptotically decompose into a finite collection of solitons traveling with distinct characteristic speeds together with a dispersively decaying remainder term as $t\to\infty$. Although this picture suggests that $\varphi_x(t)$ does not exhibit dispersive decay as $t\to\infty$, one may nevertheless expect linear-like dispersive decay on quartic time scales in the size of the initial perturbation as in~\cite{Ifrim_2023_dispersive}.

Establishing the validity of the higher-order Whitham modulation system, its soliton resolution, and the dispersive decay of $u(t)$ towards $w_{\textnormal{mod}}(t)$ on quartic time scales remain open problems. We expect that the stability results in this paper, which provide global-in-time control of $u(t) - w_{\textnormal{mod}}(t)$ and $\nabla \varphi(t)$ in any Sobolev norm, see Corollary~\ref{c:main_result}, can serve as key a-priori energy estimates in potential future works addressing these questions.

\subsection{Modulational data}

Since the modulated perturbation equation~\eqref{e:modperteq} and the modulated energy~\eqref{e:defwidetildegamma} depend only on spatial or temporal \emph{derivatives} of the modulation function $\psi(t)$, the nonlinear stability argument can be closed by only controlling the spacetime gradient $\nabla \psi(t)$ and the inverse-modulated perturbation $\vt(t)$. The $L^2$-norm of $\psi(t)$ itself is estimated a-posteriori. This is reflected in the choice of the template function $\eta(t)$ in the proof of Theorem~\ref{t:main_result}. 

Based on these observations, we expect that our approach can be employed to prove that solutions $u(t)$ of~\eqref{KdV} with \emph{modulational data} of the form
\begin{align*}
u(x,0) = w(x + \varphi_0(x)) + v_0(x),
\end{align*}
where $v_0,\varphi_0 \colon \R \to \R$ are such that $\|\nabla \varphi_0\|_{H^1(\R)}$ and $\|v_0\|_{H^2(\R)}$ are small, stay close to a modulated periodic wave $w_{\textnormal{mod}}(x,t) = w(x - ct + \varphi(x,t))$. We emphasize that $\varphi_0$ need not be localized, and may even be unbounded. 

Nonlinear stability results for modulational data have been obtained for periodic waves in dissipative systems~\cite{sandstede2012diffusive,johnson_2014_behavior,Iyer_2019_mixing,zumbrun_2024_forward,Alexopoulos2025Modulation}, as well as for plane waves in the complex Klein--Gordon and NLS equations~\cite{bukieda2025orbital,zhidkov2001korteweg}. To handle modulational data, the initial condition for the modulation function $\varphi(t)$ must satisfy $\varphi(0) = \varphi_0$. In particular, modulation functions are no longer $L^2$-localized, and their Floquet--Bloch transforms are only defined in the sense of tempered distributions. This creates a technical challenge in obtaining a coercivity estimate in the present setting. This issue was absent in previous works~\cite{bukieda2025orbital,zhidkov2001korteweg} on plane waves in Hamiltonian systems with gauge symmetry, where a passage to polar coordinates reduces the plane wave to a constant state and bypasses the Floquet--Bloch transform.

\subsection{Indefinite energies}

Although various Hamiltonian systems supporting periodic waves admit conserved energies whose second variation about the wave yields a diffusively spectrally stable operator (see Remark~\ref{rem:diff_energies_in_other_systems}), there are important exceptions, particularly among non-integrable Hamiltonian systems, for which no such conserved energy exists. An example is the \emph{generalized KdV equation}
\begin{align} \label{gKdV}
u_t + u_{xxx} + \partial_x(u^{p+1}) = 0
\end{align}
with parameter $p > 0$. Since~\eqref{gKdV} is non-integrable for $p \neq 1,2$, it does not possess an infinite hierarchy of conserved quantities, and variational stability arguments must instead rely on the Hamiltonian itself. Using a Sturm--Liouville argument similar to that in~\S\ref{sec:diffusive_KdV_intro}, one finds that the second variation of the Hamiltonian about a periodic wave has, when posed on $L^2(\R)$, negative bands of spectrum. Consequently, it cannot correspond to a diffusively spectrally stable operator, obstructing the modulational approach developed in this paper. This obstruction appears to be a limitation of the method rather than an indication of instability: indeed, the analysis in~\cite{Haragus_2008_spectra} shows that periodic waves in~\eqref{gKdV} are spectrally stable with respect to $L^2(\R)$-perturbations for any $p \in (0,2)$.

Orbital stability of periodic waves in~\eqref{gKdV} subject to co-periodic perturbations has been established by characterizing the wave as a strict minimizer of the Hamiltonian subject to a momentum constraint; see~\cite{johnson2009nonlinear,Angulo_2008_positivity}. We are currently investigating whether such constrained variational arguments can be lifted to Floquet--Bloch space, with the aim of extending our framework to indefinite energies whose second variation has finitely many negative spectral bands. Such an extension would pave the way toward a nonlinear stability result analogous to Theorem~\ref{t:main_result} for periodic traveling-wave solutions to~\eqref{gKdV} with $p\in(0,2)$.

\subsection{Applicability to other stability problems with critical essential spectrum} We expect that the modulational approach developed in this paper has broad applicability to stability problems in Hamiltonian systems with critical essential spectrum. Relevant examples include the stability of dark solitons and kinks in NLS-type equations, as well as the stability of planar waves in multidimensional Hamiltonian systems.

\emph{Planar waves} arise by trivially extending a one-dimensional traveling wave in the transverse spatial directions. Using constrained variational arguments, orbital stability results for planar solitary waves have been obtained for perturbations that are periodic in the transverse directions with a fixed period~\cite{Mizumachi_2012_stability,Rousset_2012_stability,Yamazaki_2017_stability}. For spatially localized perturbations, however, these methods break down because translational invariance in the transverse directions renders the spectrum of the second variation purely essential, so that no coercivity on a finite-codimensional constraint space can be expected. By adapting the present approach to account for spatiotemporal modulation in the transverse directions, we believe that orbital stability with respect to $L^2$-localized perturbations can be obtained. Related modulational techniques have been proven successful in the stability analysis of planar waves in dissipative systems~\cite{Kapitula_1997_multidimensional,deRijk_2026_stability}. We note that, under additional assumptions such as zero-mean conditions or polynomial/exponential localization of perturbations, complementary asymptotic stability results for planar solitary waves have been established by exploiting dispersive decay~\cite{Mizmachi_2015_stability,Mizumachi_2018_stability,Cuccagna_2008_asymptotic,Mizumachi_2020_stability}.

The defocusing NLS equation and its variants admit front-type solutions connecting nonzero asymptotic states. These end states may be constant, as in the case of kinks or black solitons, or periodic plane waves, as in the case of dark solitons. Owing to the rotational symmetry of the equation, such nontrivial background states typically generate a neutral branch of essential spectrum for the second variation of any conserved energy that touches the origin. The absence of a spectral gap at $0$ then precludes the direct application of standard variational stability arguments. A well-established approach for overcoming this obstruction is to work in the \emph{energy space}
\begin{align*}
\big\{v \in H^1_{\mathrm{loc}}(\R) : v' \in L^2(\R),\ |v|^2 - 1 \in L^2(\R)\big\},
\end{align*}
endowed with the distance
\begin{align*}
d(v_1,v_2) = |v_1(0) - v_2(0)| + \big\|v_1' - v_2'\big\|_{L^2(\R)} + \big\||v_1|^2 - |v_2|^2\big\|_{L^2(\R)},
\end{align*}
or variants thereof, which effectively factor out phase rotations at spatial infinity, thereby restoring coercivity, and enabling proofs of orbital stability; see, for example,~\cite{holmer_2025_orbital,Bethuel_2008_orbital,Lin_2002_stability,Gerard_2009_orbital,Alejo_2024_orbital,Alama_2015_domain} and the references therein. We expect that the modulational approach developed in this paper offers an alternative mechanism for addressing the same obstruction. Rather than encoding asymptotic phase rotations through the choice of function space, the method incorporates them through a spatiotemporal phase modulation of the wave. This yields a coercivity estimate at the expense of a derivative of the phase modulation. An advantage of the modulational framework is that it remains within standard $L^2$-based Hilbert spaces and, through the Floquet--Bloch transform, extends naturally to settings such as dispersive shocks, where the asymptotic state is a general periodic wave rather than a constant state or a plane wave.

\appendix

\section{Floquet--Bloch transform} \label{appendixFB}

We collect some properties of the Floquet--Bloch transform that are relevant for our analysis. A more detailed account can be found in~\cite{Reed1978Methods,Scarpellini_1999_Stability,Kuchment_1993_operator,Lewin_2024_spectral}.

Fix $\ell > 0$. The Floquet--Bloch transform
\begin{align*}
\mathcal{B}\colon L^2(\R) \rightarrow L^2\left(\left(-\tfrac{\pi}{\ell},\tfrac{\pi}{\ell}\right),L_\textnormal{per}^2(0,\ell)\right)
\end{align*}
is given by
\begin{align*}
\mathcal{B}f(\xi,x) = \sum_{n \in \Z} \eu^{\frac{2\pi\iu}{\ell}n x}\hat{f} \left(\xi + \frac{2\pi}{\ell} n\right),
\end{align*}
for $f \in \mathcal{S}(\R)$ and can be extended to $f \in L^2(\R)$ by density of the Schwartz space $\mathcal{S}(\R) \subset L^2(\R)$. Here,
\begin{align*}
\hat{f}(k) = \int_\R \eu^{- \iu k x} f(x) \dx
\end{align*}
denotes the Fourier transform. We use the shorthand notation $\check{f} = \mathcal{B}f$ throughout this paper. The Floquet--Bloch transform is an isomorphism with inverse given by
\begin{align*}
    f(x) = \frac{1}{2\pi} \int_{-\frac{\pi}{\ell}}^{\frac{\pi}{\ell}} \eu^{\iu \xi x} \check{f}(\xi,x) \dxi
\end{align*}
for $f \in \mathcal{S}(\R)$. It satisfies Parseval's identity
\begin{align*}
    \langle f, g \rangle_{L^2(\R)} = \frac{1}{2\pi \ell}\int_{-\frac{\pi}{\ell}}^{\frac{\pi}{\ell}} \langle \check{f}(\xi), \check{g}(\xi) \rangle_{L^2(0,\ell)} \dxi
\end{align*}
for $f, g \in L^2(\R)$ and admits a physical-space representation, which reads
\begin{align*}
\check{f}(\xi,x) = \ell \sum_{n \in \Z} \eu^{-\iu \xi (x+n \ell)} f(x+n\ell)
\end{align*}
for $f \in \mathcal{S}(\R)$. The latter readily yields the identity
\begin{align*} 
    \widecheck{fg} = \check{f}g.
\end{align*}
for $f \in L^2(\R)$ and $\ell$-periodic $g \in C(\R)$. Finally, it follows from the properties of the Fourier transform that 
\begin{align*}
\mathcal{B}\left(\partial_x f \right)(\xi) = (\partial_x + \iu \xi) \check{f}(\xi)
\end{align*}
for $f \in H^1(\R)$. 

\section{Analysis of profile equations} \label{appendixPhPA}

Let $u_{\text{tw}}(x,t) = w(x-ct)$ be a periodic traveling-wave solution to the KdV equation~\eqref{KdV} with wave speed $c \in \R$ and profile $w\colon \R \to \R$ of fundamental period $\ell > 0$. We briefly describe how a standard analysis of the associated Hamiltonian profile equations yields the cnoidal-wave representation~\eqref{e:cnoidal_form} of $u_{\mathrm{tw}}$, as well as explicit expressions for the wave speed $c$ and the integration constants $c_{1,2} \in \R$, appearing in the energy~\eqref{e:def_energy}, in terms of the wave's parameters.

First, we observe that, if $u(x,t)$ is a solution to~\eqref{KdV}, then, by Galilean, translational, and scaling invariance,
\begin{align} \label{e:Galilean}
\tilde{u}(x,t) = h + \kappa^2 u(\kappa(x-x_0-ht),\kappa^3(t-t_0))
\end{align}
is also a solution for arbitrary parameters $h, \kappa, x_0,t_0 \in \R$. Using the Galilean and translational symmetries, we may therefore normalize the profile so that the minimum value of $w$ is $0$, which is attained at $x = \pm \ell/2$.

Since $w$ is a stationary solution to~\eqref{cKdV} and~\eqref{e:KdV2}, it satisfies the Euler--Lagrange equations~\eqref{e:Euler_Lagrange_1} and~\eqref{e:Euler_Lagrange_2}. Evaluating these at $x = \frac12 \ell$ yields the integration constants $c_1 = w''(\frac12 \ell)$ and $c_2 = -w''''(\frac12 \ell)$. Both Euler--Lagrange equations admit a Hamiltonian formulation, and conservation of the associated Hamiltonians yields
\begin{align} \label{e:HamiltonianODE}
\begin{split}
\frac12 c w^2 - \frac16 w^3 - \frac12 (w')^2 + c_1 w &= 0,\\ 
-\frac16 c_1 w^2 - \frac12 c^2 w^2 + \frac5{72} w^4 + \frac56 w (w')^2 - \frac12 (w'')^2 + w''' w' + c_2 w &= -\frac12 c_1^2.
\end{split}
\end{align}
The first equation in~\eqref{e:HamiltonianODE} is a separable first-order ODE. Its solution with initial condition $w(\frac\ell2) = 0$ is $\ell$-periodic for $c \in (-4,4)$ and given by the cnoidal wave
\begin{align} \label{e:wave_with_kappa} 
w(x) = 12 \kappa^2 \Eps \, \textnormal{cn}^2(\kappa x;\Eps) = 12 \kappa^2 \Eps \, \cos^2(\textnormal{am}(\kappa x;\Eps))
\end{align}
with elliptic modulus
\begin{align} \label{e:elliptic_modulus_with_l}
\Eps=\frac{c+4\kappa^2}{8\kappa^2} \in (0,1)
\end{align}
and rescaled wave number
\begin{align} \label{e:def_l}
\kappa = \frac{2 K(\Eps)}{\ell} = \int_0^{\frac\pi2} \frac{2}{\ell \sqrt{1 - \Eps \sin^2(\theta)}} \, \textnormal{d}\theta,
\end{align}
where the invertible function $K \colon (0,1) \to (\frac{\pi}{2},\infty)$ is the complete elliptic integral of the first kind and $\phi = \textnormal{am}(x;\Eps)$ is the Jacobi amplitude, defined through
\begin{align*}
x = \int_0^{\phi} \frac{1}{\sqrt{1 - \Eps \sin^2(\theta)}} \, \textnormal{d}\theta.
\end{align*}
Using the scaling invariance of~\eqref{KdV}, we may fix $\kappa = 1$. Then,~\eqref{e:elliptic_modulus_with_l},~\eqref{e:wave_with_kappa}, and~\eqref{e:def_l} yield the wave speed
\begin{align}
c = 4(2\Eps - 1) \label{e:normwavespeed}
\end{align}
and the profile
\begin{align}\label{e:wexplicit}
w(x) = 12 \Eps \, \textnormal{cn}^2(x;\Eps) = 12 \Eps \, \cos^2(\textnormal{am}(x;\Eps))
\end{align}
with elliptic modulus $\Eps \in (0,1)$ and period $\ell = 2 K(\Eps)$. 

Inserting~\eqref{e:wexplicit} into~\eqref{e:Euler_Lagrange_1},~\eqref{e:Euler_Lagrange_2}, and~\eqref{e:HamiltonianODE} and evaluating at $x = 0$, while using $\textnormal{am}(0;\Eps) = 0$, $\partial_x \textnormal{am}(0;\Eps) = 1$, $\partial_x^2 \textnormal{am}(0;\Eps) = 0$, and $\partial_x^3 \textnormal{am}(0;\Eps) = -\Eps$, we obtain expressions for the integration constants
\begin{align} \label{e:normintegrationconstants}
  c_1 = 24 \Eps (1 - \Eps), \qquad c_2 = 96 (1 - 2 \Eps) (1 - \Eps) \Eps,
\end{align}
in terms of the elliptic modulus $\Eps \in (0,1)$.

Finally, applying the symmetry transformations~\eqref{e:Galilean} to~\eqref{e:wexplicit}, we arrive at the four-parameter family~\eqref{e:cnoidal_form} of periodic traveling-wave solutions to~\eqref{KdV} with wave speed~\eqref{e:wavespeed}. Adapting the integration constants $c_{1,2}$ accordingly, we find that the associated profile $w$ satisfies the Euler--Lagrange equations~\eqref{e:Euler_Lagrange_1} and~\eqref{e:Euler_Lagrange_2} with
\begin{align*}
c_1 &= 24\Eps(1 - \Eps)\kappa^4 - h(8\Eps - 4)\kappa^2 - \frac12 h^2,\\
c_2 &= 96(1 - 2\Eps)(1 - \Eps)\Eps \kappa^6 + 16(1 - 6(1 - \Eps)\Eps)h\kappa^4 - \frac{20}{3}(1 - 2\Eps)h^2\kappa^2 + \frac59 h^3.
\end{align*}
That is, $w$ is for any $\gamma \in \R$ a critical point of the conserved energy $E_\gamma$, given by~\eqref{e:def_energy}, 

\section{Spectral analysis} \label{appendixSpectral}

Let $u_{\text{tw}}(x,t) = w(x-ct)$ be a periodic traveling-wave solution to the KdV equation~\eqref{KdV} with wave speed $c \in \R$ and profile $w\colon \R \to \R$ of fundamental period $\ell > 0$. Let $c_{1,2} \in \R$ be such that $w$ satisfies the Euler--Lagrange equations~\eqref{e:Euler_Lagrange_1} and~\eqref{e:Euler_Lagrange_2}. The second variation of the energy $E_\gamma$, defined in~\eqref{e:def_energy}, corresponds to the operator $\A_\gamma\colon H^4(\R)\to L^2(\R)$ given by~\eqref{e:defLgamma}. In this appendix, we show that there exists an open nonempty interval $I \subset \R$ such that $-\A_\gamma$ is diffusively spectrally stable for each $\gamma \in I$, thereby proving Theorem~\ref{thm:diffusive_intro}.

Exploiting the Galilean, translational, and scaling invariances of~\eqref{KdV}, we assume without loss of generality that $w$ attains its minimum value $0$ at $x=\pm \ell/2$. Consequently, as shown in Appendix~\ref{appendixPhPA}, there exists $\Eps \in (0,1)$ such that the wave speed $c$, the profile $w$, and the integration constants $c_{1,2}$ are given by~\eqref{e:normwavespeed},~\eqref{e:wexplicit}, and~\eqref{e:normintegrationconstants}, respectively.

In order to establish diffusive spectral stability of $-\A_\gamma$, we analyze the family of Bloch operators $\A_\gamma(\xi) \colon H^4_\per(0,\ell)\to L^2_\per(0,\ell)$, given by
\begin{align*}
\A_\gamma(\xi) &= (\partial_x + \iu \xi)^4 + \frac{5}{3}w(\partial_x + \iu \xi)^2+\frac{5}{3}w'(\partial_x + \iu \xi)+\frac{5}{3}w''+\frac{5}{6}w^2-\frac13c_1-c^2\\
&\qquad +\,\gamma(-(\partial_x + \iu\xi)^2+c-w)
\end{align*}
for $\xi \in [-\frac{\pi}{\ell},\frac{\pi}{\ell})$. Our strategy is to represent the action of $\A_\gamma(\xi)$ with respect to the biorthogonal families of eigenfunctions of $\El(\xi)$ and its adjoint $\El(\xi)^*$. Here, the operator $\El \colon H^3(\R) \to L^2(\R)$, given by
\begin{align*}
\El = \mathcal{J}A = \partial_x(-\partial_x^2+c-w),
\end{align*}
is the linearization of~\eqref{cKdV} about its stationary solution $w$, the operator $A \colon H^2(\R) \to L^2(\R)$ is given by~\eqref{e:defL1}, and $\El(\xi) \colon H^3_\per(0,\ell) \to L^2_\per(0,\ell)$ and $A(\xi) \colon H^2_\per(0,\ell) \to L^2_\per(0,\ell)$, given by
\begin{align*}
\El(\xi) = (\partial_x+\iu \xi)A(\xi), \qquad A(\xi) = -(\partial_x+\iu \xi)^2+c-w
\end{align*}
are the associated Bloch operators.

Using the so-called \emph{squared-eigenfunction connection} arising from the Lax-pair formulation of the KdV equation, the eigenfunctions of $\El(\xi)$ and $\El(\xi)^*$ were determined explicitly in~\cite{bottman2009kdv} and it was shown in~\cite{rodrigues2018linear} that they form biorthogonal families of Riesz bases. The following proposition collects the relevant details obtained in~\cite{bottman2009kdv,rodrigues2018linear}.

\begin{proposition}[Properties of Bloch eigenfunctions] \label{prop:eigenfunctions}
There exists a constant $C > 0$ such that for all $\xi \in \smash{[-\frac{\pi}{\ell},\frac{\pi}{\ell})} \setminus \{0\}$ the space $\smash{L^2_\per(0,\ell)}$ admits Riesz bases of eigenfunctions $\{\phi_j(\xi) : j \in \Z\}$ of $\El(\xi)$ and of eigenfunctions $\{\smash{\tilde{\phi}}_j(\xi) : j \in \Z\}$ of $\El(\xi)^*$ with the following properties:
\begin{itemize}
    \item[(i)] \emph{(Biorthogonality)}. For all $j,k \in \Z$ it holds 
    \begin{align}\langle \phi_j(\xi),\tilde{\phi}_k(\xi)\rangle_{L^2(0,\ell)} = \delta_{jk},
    \label{e:biorthogonal}
    \end{align} 
    where $\delta_{jk}$ is the Kronecker delta.
    \item[(ii)] \emph{(Low- and high-frequency behavior)}. We have
    \begin{align} \label{e:eigenfunctionslowhigh} 
    \|\xi \phi_j(\xi)\|_{L^2(0,\ell)}, \|\tilde{\phi}_j(\xi)\|_{L^2(0,\ell)} \leq C, \qquad 
    \|\phi_k(\xi)-\tilde{\phi}_k(\xi)\|_{L^2(0,\ell)} \leq \frac{C}{k^2}
    \end{align}
    for $j \in \{0,\pm1\}$ and $k \in \Z \setminus \{0,\pm 1\}$. 
    \item[(iii)] \emph{(Representation formulas)}. For each $j \in \Z$ there exists $\eta_j(\xi) \in (-\infty,4(\Eps -1)) \cup (c,4\Eps)$ such that 
    \begin{align} \label{e:eigenfunctions_explicit}
    \eu^{\iu \xi \cdot} \phi_j(\xi) = \partial_x\alpha(\eta_j(\xi)), \qquad \eu^{\iu \xi \cdot} \smash{\tilde{\phi}}_j(\xi) = \frac{\overline{\lambda(\eta_j(\xi))}}{\nu(\eta_j(\xi))} \, \alpha(\eta_j(\xi))
    \end{align}
    where
    \begin{align*}
    \alpha(x;\eta) = \frac{\eta-c +\frac13 w(x)}{\lambda(\eta)} \exp\left(-\int_0^x \frac{\lambda(\eta)}{\eta - c + \frac13 w(y)}\dy\right)
    \end{align*}
    and
    \begin{align*}
\qquad \nu(\eta) &= \int_0^\ell \eta - c + \frac13 w(y) \dy,\qquad \lambda(\eta)^2 = (\eta - 4(\Eps - 1))(\eta - c)(\eta - 4\Eps).
    \end{align*}
    In particular, it holds 
    \begin{align} \label{e:eigenfunctionid}
     \El(\xi) \phi_j(\xi) = \lambda(\eta_j(\xi)) \,\phi_j(\xi), \qquad \El(\xi)^* \tilde{\phi}_j(\xi) = \overline{\lambda(\eta_j(\xi))}\, \tilde{\phi}_j(\xi),
    \end{align}
    and
    \begin{align} \label{e:eigenfunctionid2}
    A(\xi) \phi_j(\xi) = -\nu(\eta_j(\xi)) \, \tilde{\phi}_j(\xi)
    \end{align}
    Finally, we have $\nu(\eta) < 0$ for $\eta \in (-\infty,4(\Eps-1)]$ and $\nu(\eta) > 0$ for $\eta \in [c,4\Eps]$. 
\end{itemize}
\end{proposition}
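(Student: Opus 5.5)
The plan is not to rederive the spectral theory of $\El(\xi)$ from scratch. Instead I would import the explicit Bloch eigenfunctions of~\cite{bottman2009kdv} and the Riesz-basis statement of~\cite{rodrigues2018linear}, and then check by hand only the identities that are used later: \eqref{e:biorthogonal} with the stated normalization, \eqref{e:eigenfunctionid}, \eqref{e:eigenfunctionid2}, and the sign of $\nu$.

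\emph{Step 1: the squared-eigenfunction ODE.} In the normalization of Appendix~\ref{appendixPhPA} ($\kappa=1$, $c=4(2\Eps-1)$, $w=12\Eps\,\mathrm{cn}^2$), the Lax operator of~\eqref{KdV} is a Schrödinger operator with potential proportional to $w$. Products $\alpha=\psi_1\psi_2$ of two of its solutions, at a common spectral parameter $\eta$, solve a third-order linear ODE of Lenard type, $\mathcal{J}_2(w)\alpha=\lambda\alpha$ up to normalization (cf.~\eqref{e:defJ2}). I would verify directly, using the first integral~\eqref{e:HamiltonianODE}, that the explicit
\[
\alpha(\cdot;\eta)=\lambda(\eta)^{-1}\bigl(\eta-c+\tfrac13 w\bigr)\exp\Bigl(-\int_0^x\lambda(\eta)\bigl(\eta-c+\tfrac13 w\bigr)^{-1}\dy\Bigr)
\]
solves this ODE precisely when $\lambda(\eta)^2=(\eta-4(\Eps-1))(\eta-c)(\eta-4\Eps)$. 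That polynomial is the discriminant-type quantity built from the turning points $0$, $12\Eps$, and the third root of $\tfrac12cw^2-\tfrac16w^3+c_1w$.

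\emph{Step 2: the eigen-relations.} Differentiating the ODE, and using $\El=\partial_x A$ together with $\El^*=-A\partial_x$, I expect $A(\alpha')=-\lambda\alpha$ up to the normalizing factor. This gives at once $\El\alpha'=\lambda\alpha'$ and $\El^*\alpha=\bar\lambda\alpha$, hence~\eqref{e:eigenfunctionid}. It also gives $A\phi_j=-\nu\tilde\phi_j$, that is~\eqref{e:eigenfunctionid2}, once the constant in $\tilde\phi_j$ is fixed as $\overline{\lambda}/\nu$.

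The Bloch condition $\eu^{-\iu\xi\cdot}\alpha\in L^2_\per$ means the Floquet exponent $-\ell^{-1}\int_0^\ell\lambda/(\eta-c+\frac13w)$ must equal $\iu\xi$ modulo $2\pi\iu/\ell$. This condition selects the countable family $\eta_j(\xi)$. Bottman--Deconinck show that these $\eta_j(\xi)$ lie in the set where $\lambda$ is purely imaginary, namely $(-\infty,4(\Eps-1))\cup(c,4\Eps)$.

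\emph{Step 3: biorthogonality and normalization.} For $j\neq k$ the Bloch eigenvalues are distinct, so $\langle\phi_j,\tilde\phi_k\rangle=0$ follows from the standard adjoint argument; any coincident eigenvalues are handled as in~\cite{bottman2009kdv}. For $j=k$ I would integrate by parts, $\langle\alpha',\alpha\,\overline{\lambda}/\nu\rangle$. Substituting the logarithmic derivative of $\alpha$ should turn the integrand into $\lambda^{-1}\cdot\lambda\cdot(\eta-c+\frac13w)$ up to the normalizing constants, whose integral is $\nu$. This is exactly why $\nu$ appears in~\eqref{e:eigenfunctions_explicit} and makes the pairing equal to $1$.

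\emph{Step 4: the sign of $\nu$.} Since $\nu(\eta)=\ell\bigl(\eta-c+\tfrac13\bar w\bigr)$ with $\bar w$ the mean of $w$, $\nu$ is affine and increasing in $\eta$. Its root is $\eta_*=c-\bar w/3$, where $\bar w=12(E(\Eps)/K(\Eps)-1+\Eps)$ and $E$ denotes the complete elliptic integral of the second kind. The claim therefore reduces to $4(\Eps-1)<\eta_*<c$. This in turn reduces to the elementary bounds $1-\Eps<E/K<1$ on the ratio of complete elliptic integrals.

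\emph{Step 5: the Riesz-basis property and estimates~\eqref{e:eigenfunctionslowhigh}.} These I would take from~\cite{rodrigues2018linear}. The high-frequency bound $\|\phi_k-\tilde\phi_k\|\lesssim k^{-2}$ comes from WKB-type asymptotics of $\alpha$ as $|\eta|\to\infty$, where both functions approach the normalized Fourier mode. The main obstacle is the low-frequency part for $j\in\{0,\pm1\}$. As $\xi\to0$, the three critical eigenvalues collide at $\lambda=0$ and the corresponding $\eta_j(\xi)$ tend to the band edges $4(\Eps-1)$, $c$, $4\Eps$. There $\lambda(\eta)\to0$, so the factor $1/\lambda$ in $\alpha$ blows up like $|\xi|^{-1}$, while $\overline{\lambda}\alpha$ stays bounded. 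Verifying precisely this scaling, i.e.\ that $\lambda(\eta_j(\xi))\sim\xi$, by expanding the Floquet exponent near each band edge is the step I expect to require the most care. For this I would rely on the computations in~\cite{rodrigues2018linear}.
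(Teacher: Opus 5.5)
Your overall plan matches the paper's proof. The paper also takes the formulas from \cite{bottman2009kdv}, and takes biorthogonality, the Riesz-basis property and \eqref{e:eigenfunctionslowhigh} from \cite{rodrigues2018linear}. It checks only \eqref{e:eigenfunctionid2}, \eqref{e:eigenfunctionid} and the sign of $\nu$ by hand.

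The part you check by hand, however, rests on a false identity, and the sign in Step~2 is wrong.

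\textbf{Step 1 uses the wrong relation.} Set $g=\eta-c+\frac13 w$. The explicit formula gives $g\,\partial_x\alpha=(\frac13 w'-\lambda)\alpha$. The squared eigenfunctions therefore do not satisfy $\mathcal{J}_2(w)\alpha=\lambda\alpha$ for any normalization. The Lenard-type relation they actually satisfy is the pencil $\mathcal{J}_2(w)\alpha=(c+\eta)\,\partial_x\alpha$. So the verification proposed in Step~1 would fail, and "differentiating the ODE" does not give a relation for $A\alpha'$.

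\textbf{Step 2 has the wrong sign.} Rescaling $\alpha$ cannot change the sign in an eigenvalue relation. The correct identity is
\[
A\,\partial_x\alpha=\lambda\,\alpha=-\overline{\lambda}\,\alpha \qquad (\lambda\in\iu\R).
\]
This gives $\El\alpha'=\lambda\alpha'$, $\El^*\alpha=-A\alpha'=\overline{\lambda}\alpha$, and $A(\xi)\phi_j(\xi)=-\nu\,\tilde\phi_j(\xi)$. Your version $A\alpha'=-\lambda\alpha$ would instead give $\El\alpha'=-\lambda\alpha'$ and $A(\xi)\phi_j(\xi)=+\nu\,\tilde\phi_j(\xi)$.

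\textbf{The fix is the paper's route: verify $-\alpha'''+(c-w)\alpha'=\lambda\alpha$ directly.}
\begin{enumerate}
\item Write $\alpha'=(\frac{w'}{3\lambda}-1)\eu^{-\int_0^x\lambda/g\dy}$ and multiply the equation by $g^2$.
\item The terms of order $\lambda^{-1}$ cancel because $w'''=(c-w)w'$.
\item The terms of order $\lambda$ cancel identically.
\item What remains is $\lambda^2=g^2(\eta-\frac23 w)-\frac23 w''g+\frac19(w')^2$.
\item By \eqref{e:Euler_Lagrange_1} and the first integral in \eqref{e:HamiltonianODE}, the right-hand side equals the constant $(\eta-c)\bigl(\eta(\eta-c)-\frac23 c_1\bigr)$.
\item For $c=4(2\Eps-1)$ and $c_1=24\Eps(1-\Eps)$, this is exactly the stated cubic.
\end{enumerate}

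\textbf{Biorthogonality in Step 3.} You assume $\lambda(\eta_j(\xi))\neq\lambda(\eta_k(\xi))$ for $j\neq k$. This is unjustified, since $\eta\mapsto\lambda(\eta)$ is far from injective. Also, \cite{bottman2009kdv} does not treat biorthogonality; the paper imports it from \cite{rodrigues2018linear}.

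If you want it by hand, the corrected pencil relation works without any simplicity assumption. Write $\alpha_j=\alpha(\cdot;\eta_j(\xi))$. The products $\alpha_j'\overline{\alpha_k}$ are $\ell$-periodic, so $\mathcal{J}_2(w)$ and $\partial_x$ are skew-symmetric on these functions. Since $\eta_j,\eta_k$ are real, this gives $(\eta_j-\eta_k)\int_0^\ell\alpha_j'\overline{\alpha_k}\dx=0$. If $\eta_j=\eta_k$ with $j\neq k$, the two functions correspond to the eigenvalues $\pm\lambda\neq0$, and your adjoint argument applies.

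\textbf{Normalization.} Your computation is correct; it uses $|\eu^{-\int_0^x\lambda/g\dy}|=1$ and $\int_0^\ell g w'\dx=0$.

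\textbf{Step 4.} This is correct, but the detour through $E(\Eps)/K(\Eps)$ is unnecessary. The paper argues pointwise:
\begin{itemize}
\item for $\eta\le 4(\Eps-1)$ the integrand of $\nu$ is at most $\frac13 w-4\Eps\le0$;
\item for $\eta\ge c$ it is at least $\frac13 w\ge0$;
\item neither vanishes identically, because $0\le w\le 12\Eps$ and $w$ is nonconstant.
\end{itemize}
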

\begin{proof}
The formulas~\eqref{e:eigenfunctions_explicit} were derived in~\cite[Theorem~7.1]{bottman2009kdv}. Using identities~\eqref{e:normwavespeed} and~\eqref{e:normintegrationconstants} and employing the Euler--Lagrange equation~\eqref{e:Euler_Lagrange_1} and the first-order profile equation in~\eqref{e:HamiltonianODE}, one directly verifies identity~\eqref{e:eigenfunctionid2}. Using that $\lambda(\eta_j(\xi))$ is purely imaginary, this immediately implies~\eqref{e:eigenfunctionid}. In particular, $\lambda(\eta_j(\xi))$ is an eigenvalue of $\El(\xi)$ with associated eigenfunction $\phi_j(\xi)$, while $\smash{\overline{\lambda(\eta_j(\xi))}}$ is an eigenvalue of $\El(\xi)^*$ with associated eigenfunction $\smash{\tilde\phi}_j(\xi)$. Moreover, since $0 = w(\pm \ell/2) \leq w(x) \leq w(0) = 12\Eps$ for all $x \in \R$ by~\eqref{e:wexplicit}, it follows from~\eqref{e:normwavespeed} that $\nu(\eta) < 0$ for $\eta \in (-\infty,4(\Eps-1)]$ and $\nu(\eta) > 0$ for $\eta \in [c,4\Eps]$.

It was established in~\cite[Section~2.2 and Proposition~3.1]{rodrigues2018linear} that $\{\phi_j(\xi) : j \in \Z\}$ and $\{\smash{\tilde{\phi}}_j(\xi) : j \in \Z\}$ are biorthogonal families which form Riesz bases of $L^2_\per(0,\ell)$. Finally, the estimates~\eqref{e:eigenfunctionslowhigh} were obtained in~\cite[Proposition~2.1 and Lemma~2.3]{rodrigues2018linear}.
\end{proof}

A Riesz basis $\{f_j : j \in \Z\}$ of a Hilbert space $X$ is characterized by the existence of constants $C_1, C_2 > 0$ such that
\begin{align} \label{e:Riesz0}
C_1 \|z\|_X^2 \leq \sum_{j \in \Z} \left|\langle z, f_j \rangle_X\right|^2 \leq C_2 \|z\|_X^2
\end{align}
for all $z \in X$; see~\cite[Theorem~3.4.5]{Davies_2007_linear}. The following lemma describes how the lower bound in~\eqref{e:Riesz0} depends on $\xi$ for the Riesz bases of eigenfunctions of $\El(\xi)$ and $\El(\xi)^*$, established in Proposition~\ref{prop:eigenfunctions}.

\begin{lemma}[Lower Riesz bounds] \label{lem:Riesz}
Let $\{\phi_j(\xi) : j \in \Z\}$ and $\{\tilde{\phi}_j(\xi) : j \in \mathbb{Z}\}$ denote the Riesz bases of $L^2_\per(0,\ell)$ formed by eigenfunctions of $\El(\xi)$ and of its adjoint $\El(\xi)^*$, respectively, as obtained in Proposition~\ref{prop:eigenfunctions}. There exists a constant $\varsigma > 0$ such that
\begin{align} \label{e:Rieszlower}
\varsigma \xi^2 \|z\|_{L^2(0,\ell)}^2 \leq \sum_{j \in \Z} \left|\langle z, \tilde\phi_j(\xi) \rangle_{L^2(0,\ell)}\right|^2, \qquad \varsigma \|z\|_{L^2(0,\ell)}^2 \leq \sum_{j \in \Z} \left|\langle z, \phi_j(\xi) \rangle_{L^2(0,\ell)}\right|^2
\end{align}
for all $z \in L_\per^2(0,\ell)$ and $\xi \in \smash{[-\frac{\pi}{\ell}, \frac{\pi}{\ell})} \setminus \{0\}$.
\end{lemma}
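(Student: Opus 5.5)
The plan is to use biorthogonality together with the Riesz basis property of the \emph{other} family. The key observation is the standard duality: if $\{\phi_j\}$ and $\{\tilde\phi_j\}$ are biorthogonal Riesz bases, then the upper Riesz bound for one family controls the lower Riesz bound for the other. Concretely, expand $z = \sum_j \langle z,\tilde\phi_j(\xi)\rangle \phi_j(\xi)$, which holds by \eqref{e:biorthogonal} and the basis property. Then
\begin{align*}
\|z\|_{L^2(0,\ell)}^2 = \sum_{j} \langle z,\tilde\phi_j(\xi)\rangle \langle \phi_j(\xi), z\rangle \leq \Big(\sum_j |\langle z,\tilde\phi_j(\xi)\rangle|^2\Big)^{1/2}\Big(\sum_j |\langle z,\phi_j(\xi)\rangle|^2\Big)^{1/2}.
\end{align*}
Thus a $\xi$-uniform upper bound $\sum_j|\langle z,\phi_j\rangle|^2\le C_\phi(\xi)\|z\|^2$ gives the first lower bound with constant $1/C_\phi(\xi)$. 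Symmetrically, an upper bound for the $\tilde\phi_j$-sum gives the second lower bound. So it suffices to establish upper Bessel bounds with the right $\xi$-dependence: $\sum_j|\langle z,\tilde\phi_j(\xi)\rangle|^2\le C\|z\|^2$ uniformly in $\xi$, and $\sum_j|\langle z,\phi_j(\xi)\rangle|^2\le C\xi^{-2}\|z\|^2$.

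To obtain these upper bounds, I split off the finitely many low indices $j\in\{0,\pm1\}$ from the rest. For the low indices, Cauchy--Schwarz and \eqref{e:eigenfunctionslowhigh} give $|\langle z,\tilde\phi_j\rangle|\le C\|z\|$ and $|\langle z,\phi_j\rangle|\le C|\xi|^{-1}\|z\|$. For $|k|\ge2$, I compare both families to a fixed reference orthonormal-like system. Since $\|\phi_k-\tilde\phi_k\|\le Ck^{-2}$ is square-summable uniformly in $\xi$, the high-index parts of the two families differ by a Hilbert--Schmidt perturbation with uniformly bounded norm:
\begin{align*}
\sum_{|k|\ge2}|\langle z,\phi_k-\tilde\phi_k\rangle|^2\le C\|z\|^2\sum_k k^{-4}.
\end{align*}
It therefore remains to show a uniform Bessel bound for one of the high families. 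Writing $\phi_k = \tfrac12(\phi_k+\tilde\phi_k)+\tfrac12(\phi_k-\tilde\phi_k)$, biorthogonality gives approximate orthonormality $\langle\phi_j,\phi_k\rangle = \delta_{jk}+O(j^{-2}+k^{-2})$. Via the Gram matrix, which is then identity plus a uniformly Hilbert--Schmidt matrix, this yields a uniform upper bound $\sum_{|k|\ge2}|\langle z,\phi_k\rangle|^2\le C\|z\|^2$. The Gram estimate is where care is needed: a Gram matrix $G=I+K$ with $\|K\|_{HS}\le C$ gives the Bessel bound $\|G\|\le 1+C$ directly, without needing invertibility. Note that $\langle\phi_j,\phi_k\rangle-\delta_{jk}=\langle\phi_j,\phi_k-\tilde\phi_k\rangle$, whose Hilbert--Schmidt size needs $\|\phi_j\|$ bounded for $|j|\ge2$; this follows from $\|\tilde\phi_j\|$ being bounded, which I would extract from the explicit formulas \eqref{e:eigenfunctions_explicit} or from \cite{rodrigues2018linear}.

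The main obstacle is this uniform-in-$\xi$ control of the high modes. If the explicit bounds do not suffice, the fallback is to use that the Riesz constants in \cite{rodrigues2018linear} are uniform on compact subsets of $\xi\neq0$, and to analyze only $|\xi|\ll1$ through the above perturbative argument. Combining the low- and high-index upper bounds with the duality inequality yields \eqref{e:Rieszlower}, with $\varsigma$ the reciprocal of the resulting constants.
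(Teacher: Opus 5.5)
Your duality reduction is sound, and it is a different route from the paper's. Write $a_j=\langle z,\tilde\phi_j(\xi)\rangle$, $b_j=\langle z,\phi_j(\xi)\rangle$, $S=\sum_j|a_j|^2$, $S'=\sum_j|b_j|^2$. Then $\|z\|^2\le S^{1/2}S'^{1/2}$, so upper bounds $S\le C\|z\|^2$ and $S'\le C\xi^{-2}\|z\|^2$ would give both inequalities.

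\textbf{The gap.} It sits in the step you flag yourself: the $\xi$-uniform Bessel bound for $\{\phi_k(\xi)\}_{|k|\ge2}$. The entry bounds you state do not make the Gram matrix ``identity plus Hilbert--Schmidt''. This holds both for $O(|j|^{-2}+|k|^{-2})$ and for $|\langle\phi_j,\phi_k-\tilde\phi_k\rangle|\le C|k|^{-2}$, which is what boundedness of $\|\phi_j\|$ gives. They do not even make it bounded, since there is no decay in $j$ for fixed $k$. For instance, the matrix with all entries $|k|^{-2}$ satisfies both bounds but maps a unit vector to a constant sequence.

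\textbf{How to fix it.} Use that each Gram entry has two representations:
\[
\langle\phi_k,\phi_j\rangle-\delta_{jk}=\langle\phi_k-\tilde\phi_k,\phi_j\rangle=\langle\phi_k,\phi_j-\tilde\phi_j\rangle .
\]
This gives $|\langle\phi_k,\phi_j\rangle-\delta_{jk}|\le C\max(|j|,|k|)^{-2}$. That bound is square-summable over $\mathbb{Z}^2$: there are $8n$ pairs with $\max(|j|,|k|)=n$, so the sum is at most $\sum_n 8n^{-3}$.

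The uniform bound on $\|\tilde\phi_j(\xi)\|$ for $|j|\ge2$ also needs no explicit formulas or literature. Biorthogonality gives $\|\tilde\phi_j\|^2=1-\langle\phi_j-\tilde\phi_j,\tilde\phi_j\rangle\le1+Cj^{-2}\|\tilde\phi_j\|$, hence $\|\tilde\phi_j\|\le1+Cj^{-2}$. With these two repairs your argument closes, and the fallback to $\xi$-uniform Riesz constants is unnecessary.

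\textbf{Comparison with the paper.} The paper proves no upper Bessel bound at all. It writes $\xi^2\|z\|^2=\xi^2\sum_j a_j\overline{b_j}$ and, for $|j|\ge2$, substitutes $b_j=a_j+\langle z,\phi_j-\tilde\phi_j\rangle$. The high-index part is then at most $\frac{\pi^2}{\ell^2}S+C|\xi|\,\|z\|\,S^{1/2}$. The low-index part is at most $C|\xi|\,\|z\|\,S^{1/2}$, using $\|\xi\phi_j\|\le C$. Young's inequality then closes, and the second bound is symmetric.

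Your duality inequality can be closed the same way without any Gram matrix. The same substitution gives $S'\le 2S+C\xi^{-2}\|z\|^2$. Hence $\|z\|^4\le SS'\le 2S^2+C\xi^{-2}\|z\|^2S$, which forces $S\ge c\,\xi^2\|z\|^2$. Analogously, $S\le 2S'+C\|z\|^2$ forces $S'\ge c\|z\|^2$.

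What the repaired Gram route buys, beyond the lemma, is matching upper Riesz bounds with explicit $\xi$-dependence. The paper does not need these.
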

\begin{proof}
Let $\xi \in [-\frac\pi\ell,\frac\pi\ell) \setminus \{0\}$ and $z \in L^2_\per(0,\ell)$. Using~\cite[Lemma~3.3.3]{Davies_2007_linear} to write
\begin{align} \label{e:basis_rep}
z = \sum_{j \in \Z} \langle z,\tilde{\phi}_j(\xi)\rangle_{L^2(0,\ell)} \phi_j(\xi),
\end{align}
we decompose 
\begin{align*}
\xi^2 \|z\|_{L^2(0,\ell)}^2 = \xi^2 \sum_{j \in \Z} \langle z,\tilde{\phi}_j(\xi)\rangle_{L^2(0,\ell)}\langle \phi_j(\xi),z\rangle_{L^2(0,\ell)} = I_1 + I_2
\end{align*}
into a high-frequency component
\begin{align*}
I_1 = \xi^2 \sum_{j \in \Z \setminus \{0,\pm 1\}} \left|\langle z,\tilde{\phi}_j(\xi)\rangle_{L^2(0,\ell)}\right|^2 + \xi^2 \sum_{j \in \Z \setminus \{0,\pm 1\}} \langle\phi_j(\xi) - \tilde{\phi}_j(\xi),z\rangle_{L^2(0,\ell)} \langle z,\tilde{\phi}_j(\xi)\rangle_{L^2(0,\ell)}
\end{align*}
and a low-frequency component
\begin{align*}
I_2 = \xi^2 \sum_{j \in \{0,\pm 1\}} \langle z,\tilde{\phi}_j(\xi)\rangle_{L^2(0,\ell)}\langle \phi_j(\xi),z\rangle_{L^2(0,\ell)}.
\end{align*}
The Cauchy--Schwarz and H\"older inequalities and the high-frequency bound from Proposition~\ref{prop:eigenfunctions} yield
\begin{align*}
|I_1| \leq \frac{\pi^2}{\ell^2} \sum_{j \in \Z} \left|\langle z,\tilde{\phi}_j(\xi)\rangle_{L^2(0,\ell)}\right|^2 + \frac{C M \pi}{\ell} \left(\sum_{j \in \Z} \left|\langle z,\tilde{\phi}_j(\xi)\rangle_{L^2(0,\ell)}\right|^2\right)^{\frac12} |\xi| \|z\|_{L^2(0,\ell)},
\end{align*}
where we denote $M = \sum_{j \in \Z \setminus \{0\}} j^{-4}$. On the other hand, using the low-frequency bound $\|\xi \phi_j(\xi)\|_{L^2(0,\ell)} \leq C$ for $j = 0,\pm 1$ from Proposition~\ref{prop:eigenfunctions}, we obtain
\begin{align*}
|I_2| \leq \sqrt{3} \, C \left(\sum_{j \in \Z} \left|\langle z,\tilde{\phi}_j(\xi)\rangle_{L^2(0,\ell)}\right|^2\right)^{\frac12} |\xi| \|z\|_{L^2(0,\ell)}.
\end{align*}
Combining the estimates on $I_1$ and $I_2$ and applying Young's inequality, we obtain the first estimate in~\eqref{e:Rieszlower}. The second estimate in~\eqref{e:Rieszlower} follows analogously by interchanging the roles of $\phi_j(\xi)$ and $\tilde{\phi}_j(\xi)$ and using the bound $\|\tilde{\phi}_j(\xi)\|_{L^2(0,\ell)} \leq C$ for $j \in  0,\pm 1$ instead.
\end{proof}

Finally, we prove that $-\A_\gamma$ is diffusively spectrally stable by representing the action of the Bloch operator $\A_\gamma(\xi)$ with respect to the biorthogonal families of eigenfunctions of $\El(\xi)$ and $\El(\xi)^*$.

\begin{proof}[Proof of Theorem~\ref{thm:diffusive_intro}]
Proceeding as in~\cite{deconinck2010orbital}, we compute $\A_\gamma(\xi)\phi_j(\xi)$ for $j \in \Z$ and $\xi \in [-\frac\pi\ell,\frac\pi\ell) \setminus \{0\}$, where $\phi_j(\xi)$ is the eigenfunction of $\El(\xi)$ obtained in Proposition~\ref{prop:eigenfunctions}. Thus, using the representation formula~\eqref{e:eigenfunctions_explicit} for $\phi_j(\xi)$, applying the identities~\eqref{e:normwavespeed} and~\eqref{e:normintegrationconstants}, and employing the first-order profile equation in~\eqref{e:HamiltonianODE} and the Euler--Lagrange equations~\eqref{e:Euler_Lagrange_1} and~\eqref{e:Euler_Lagrange_2} to rewrite first, second, and fourth derivatives of $w$, we arrive at
\begin{align} \label{e:action_Elgamma}
    \A_\gamma(\xi) \phi_j(\xi) = \beta(\eta_j(\xi),\gamma) \nu(\eta_j(\xi))\tilde{\phi}_j(\xi), \qquad \beta(\eta,\gamma) \coloneqq \eta - \gamma + c 
\end{align}
for all $j \in \Z$ and $\xi \in [-\frac\pi\ell,\frac\pi\ell) \setminus \{0\}$. Recall from Proposition~\ref{prop:eigenfunctions} that $\nu(\eta)$ is an affine linear function of $\eta$ satisfying $\nu(\eta) < 0$ for $\eta \in (-\infty,4(\Eps-1)]$ and $\nu(\eta) > 0$ for $\eta \in [c,4\Eps]$. Hence, using~\eqref{e:normwavespeed}, we find that, for each $\gamma \in (4(3\Eps - 2),4(4\Eps - 2))$, there exists a constant $\theta_*(\gamma) > 0$ such that
\begin{align} \label{e:betajlower}
\beta(\eta,\gamma)\nu(\eta), \frac{\beta(\eta,\gamma)}{\nu(\eta)} \geq \theta_*(\gamma)
\end{align}
for all $\eta \in (-\infty,4(\Eps-1)] \cup [c,4\Eps]$.

Let $\gamma \in (4(3\Eps - 2),4(4\Eps - 2))$. Using the representation~\eqref{e:basis_rep} and applying Lemma~\ref{lem:Riesz}, identity~\eqref{e:action_Elgamma}, and estimate~\eqref{e:betajlower}, we establish
\begin{align} \label{e:coerc_proof}
\langle \A_\gamma(\xi) z, z\rangle_{L^2(0,\ell)} = \sum_{j \in \Z} \beta(\eta_j(\xi),\gamma)\nu(\eta_j(\xi)) \left|\langle z,\tilde{\phi}_j(\xi)\rangle_{L^2(0,\ell)}\right|^2 
\geq \varsigma \theta_*(\gamma) \xi^2 \|z\|_{L^2(0,\ell)}^2 
\end{align}
for all $z \in H^4_\per(0,\ell)$ and $\xi \in [-\frac\pi\ell,\frac\pi\ell) \setminus \{0\}$. Similarly, Proposition~\ref{prop:eigenfunctions}, Lemma~\ref{lem:Riesz}, identity~\eqref{e:action_Elgamma}, and estimate~\eqref{e:betajlower} yield
\begin{align*}
\langle \A_\gamma(\xi) z, z\rangle_{L^2(0,\ell)} = \sum_{j \in \Z} \frac{\beta(\eta_j(\xi),\gamma)}{\nu(\eta_j(\xi))} \left|\langle A(\xi) z, \phi_j(\xi)\rangle_{L^2(0,\ell)}\right|^2 \geq \varsigma \theta_*(\gamma) \, \|A(\xi) z\|_{L^2(0,\ell)}^2 
\end{align*}
for $z \in H^4_\per(0,\ell)$ and $\xi \in [-\frac\pi\ell,\frac\pi\ell) \setminus \{0\}$. Taking the limit $\xi \to 0$ at both sides of the latter inequality, it follows that $\ker(\A_\gamma(0)) \subset \ker(A(0))$. On the one hand, we have $\A_\gamma(0) w' = 0$, since $w$ is a stationary solution to~\eqref{e:KdV2}. On the other hand, it has been shown in~\cite[Theorem~5.1]{angulo2006stability} that the kernel of $A(0)$ is spanned by $w'$. We conclude that $0$ is a simple eigenvalue of the self-adjoint operator $\A_\gamma(0)$ with eigenfunction $\mathcal{J}w = w'$. Combining this with~\eqref{e:coerc_proof} shows that $-\A_\gamma$ is diffusively spectrally stable.
\end{proof}

\section{Derivation of modulated perturbation equation} \label{appendixPertEq}

We provide the details of the derivation of the equation~\eqref{e:modperteq} for the inverse-modulated perturbation~\eqref{e:def_inv_mod_2}.

Let $t \in [0,T)$. Taking spatial derivatives  of~\eqref{e:def_inv_mod_2}, we obtain
\begin{align} \label{e:uxx}
\begin{split}
        u_x(\cdot+\psi(\cdot,t),t) &= \left( \vt_x(t)+w' \right) \mathcal{R}_1(\psi_x(t)),\\ 
        u_{xx}(\cdot+\psi(\cdot,t),t) &= \left(\vt_{xx}(t)+w''\right)\mathcal{R}_2(\psi_x(t))-\left(\vt_x(t)+w'\right)\mathcal{R}_3(\psi_x(t))\psi_{xx}(t),
\end{split}
\end{align}
where we recall that $\mathcal{R}_k$ is given by~\eqref{e:defR_k}. On the other hand, taking the temporal derivative of~\eqref{e:def_inv_mod_2} and using that $u(t)$ solves~\eqref{cKdV}, we establish
\begin{align*}
\vt_t(t) &= u_t(\cdot + \psi(\cdot,t),t) + u_x(\cdot + \psi(\cdot,t),t)\psi_t(t)\\
&= \partial_x \bigg(cu(\cdot + \psi(\cdot,t),t)-\frac{1}{2}u(\cdot + \psi(\cdot,t),t)^2 - u_{xx}(\cdot + \psi(\cdot,t),t) \bigg)\mathcal{R}_1(\psi_x(t))\\ 
&\qquad + \, u_x(\cdot + \psi(\cdot,t),t)\psi_t(t).
\end{align*}
Substituting~\eqref{e:uxx} into the latter and suppressing dependency on $t$ yields
\begin{align*}
       \vt_t &= \partial_x \bigg( c(\vt+w)-\frac{1}{2}(\vt+w)^2 - \left(\vt_{xx}+w''\right)\mathcal{R}_2(\psi_x)+ \left(\vt_x+w'\right)\mathcal{R}_3(\psi_x)\psi_{xx} \bigg)\mathcal{R}_1(\psi_x)\\
       &\qquad + \left( \vt_x+w' \right) \mathcal{R}_1(\psi_x) \psi_t. 
 \end{align*}
Subsequently, we use that $w$ solves~\eqref{cKdV}, resulting in
\begin{align*}
       \vt_t &= \partial_x \bigg( c \vt - \vt w - \frac12 \vt^2 - \vt_{xx} \mathcal{R}_2(\psi_x) - w'' (\mathcal{R}_2(\psi_x) - \mathcal{R}_2(0))\\ 
       &\qquad + \left(\vt_x+w'\right)\mathcal{R}_3(\psi_x)\psi_{xx} \bigg)\mathcal{R}_1(\psi_x) + \left( \vt_x+w' \right) \mathcal{R}_1(\psi_x) \psi_t. 
 \end{align*}
Finally, expanding $\mathcal{R}_1$, $\mathcal{R}_2$, and $\mathcal{R}_3$, we collect all linear terms in $\vt$ and $\psi$ on the left-hand side and use that $\El w' = 0$. This leads to
\begin{align*}
(\partial_t - \El)(\vt - w' \psi) &= (\partial_t - \El)\vt - w' \psi_t - \partial_x(w'\psi_{xx} + 2 w'' \psi_x)\\ 
&= \partial_x \mathcal{S}_1(\vt,\psi_x) + 
\partial_x \mathcal{S}_2(\vt,\psi_x) \left(\mathcal{R}_1(\psi_x) - \mathcal{R}_1(0)\right) + \vt_x \mathcal{R}_1(\psi_x) \psi_t\\
&\qquad + w' (\mathcal{R}_1(\psi_x) - \mathcal{R}_1(0))\psi_t,
\end{align*}
where $\mathcal{S}_1(\vt,\psi_x)$ and $\mathcal{S}_2(\vt,\psi_x)$ are given by~\eqref{e:defS}. Writing $\partial_x \mathcal{S}_2(\vt,\psi_x) \left(\mathcal{R}_1(\psi_x) - \mathcal{R}_1(0)\right) = \partial_x \left(\mathcal{S}_2(\vt,\psi_x) \left(\mathcal{R}_1(\psi_x) - \mathcal{R}_1(0)\right)\right) + \mathcal{S}_2(\vt,\psi_x) \mathcal{R}_2(\psi_x)\psi_{xx}$, we finally arrive at~\eqref{e:modperteq}. 

\section{Technical lemmas} \label{appendixtech}

In this appendix, we establish two technical lemmas that are used in our nonlinear stability analysis. The first result shows that modulation by a function with sufficiently small derivative is continuous.

\begin{lemma}[Continuity of modulation map] \label{lem:cont_mod}
Set $X_1 = \{\psi \in H^2(\R) \colon \|\psi'\|_{L^\infty(\R)} \leq \smash{\frac{1}{2}}\}$. The map $\Phi \colon L^2(\R) \times X_1 \to L^2(\R)$, given by
\begin{align*}
\Phi(v,\psi) = v(\cdot +  \psi(\cdot)),   
\end{align*}
is continuous.
\end{lemma}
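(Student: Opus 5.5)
We prove continuity of $\Phi$ at a fixed point $(v,\psi) \in L^2(\R) \times X_1$ by combining a uniform change-of-variables bound with a density argument in the first variable.

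\emph{Uniform Lipschitz bound in $v$.} For every $\psi \in X_1$ the map $h = \mathrm{id} + \psi$ is a $C^1$-diffeomorphism of $\R$ with $h' = 1+\psi' \in [\tfrac12,\tfrac32]$. Substituting $y = h(x)$ gives
\begin{align*}
\|f(\cdot+\psi(\cdot))\|_{L^2(\R)}^2 = \int_\R |f(y)|^2 \frac{1}{1+\psi'(h^{-1}(y))}\dy \leq 2\|f\|_{L^2(\R)}^2
\end{align*}
for all $f \in L^2(\R)$. Hence $\Phi(\cdot,\psi)$ is linear and bounded with norm at most $\sqrt2$, uniformly in $\psi \in X_1$.

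\emph{Splitting.} Let $(v_n,\psi_n) \to (v,\psi)$ in $L^2(\R) \times H^2(\R)$ with $\psi_n \in X_1$. Fix $\varepsilon > 0$ and choose $g \in C_c^\infty(\R)$ with $\|v-g\|_{L^2(\R)} \leq \varepsilon$. We decompose
\begin{align*}
\Phi(v_n,\psi_n) - \Phi(v,\psi) = \Phi(v_n - v,\psi_n) + \Phi(v-g,\psi_n) - \Phi(v-g,\psi) + \big(g(\cdot+\psi_n) - g(\cdot+\psi)\big).
\end{align*}
By the uniform bound, the first three terms have total $L^2$-norm at most $\sqrt2\big(\|v_n-v\|_{L^2(\R)} + 2\varepsilon\big)$.

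\emph{The smooth term.} For the last term, the mean value theorem gives the pointwise bound
\begin{align*}
|g(x+\psi_n(x)) - g(x+\psi(x))| \leq \|g'\|_{L^\infty(\R)}\,|\psi_n(x)-\psi(x)|.
\end{align*}
Therefore its $L^2$-norm is at most $\|g'\|_{L^\infty(\R)}\|\psi_n - \psi\|_{L^2(\R)}$, which tends to $0$ because $\psi_n \to \psi$ in $H^2(\R)$.

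\emph{Conclusion.} Combining the three steps yields
\begin{align*}
\limsup_{n\to\infty}\|\Phi(v_n,\psi_n)-\Phi(v,\psi)\|_{L^2(\R)} \leq 2\sqrt2\,\varepsilon.
\end{align*}
Since $\varepsilon > 0$ was arbitrary, $\Phi$ is continuous at $(v,\psi)$. Continuity is only obtained through this density step, because $v$ itself has no regularity that would let us apply the mean value theorem directly.
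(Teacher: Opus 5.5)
Your proof is correct and follows the same route as the paper: you approximate $v$ by a $C_{\mathrm{c}}^\infty$ function, control the rough parts uniformly in $\psi$ via the substitution $y = x+\psi(x)$ and the bound $1+\psi' \geq \tfrac12$, and handle the smooth part with the mean value theorem and $\|\psi_n-\psi\|_{L^2(\R)}$. Your sequential formulation fixes $g$ before letting $n\to\infty$, so it also handles the dependence of $\|g'\|_{L^\infty(\R)}$ on $\varepsilon$, which the paper's single-$\varepsilon$ bookkeeping glosses over.
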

\begin{proof}
Let $(v_0,\psi_0) \in L^2(\R) \times X_1$ and $\eps > 0$. Take $v_* \in C^{\infty}_{\textnormal{c}}(\R)$ with $\|v_0-v_*\|_{L^2(\R)} \leq \eps$. Let $(v,\psi) \in L^2(\R) \times X_1$ with $\|v-v_0\|_{L^2(\R)} + \|\psi-\psi_0\|_{H^1(\R)} \leq \eps$. Since $\|\psi'\|_{L^\infty(\R)} \leq \frac12$, the map $\R \to \R, x \mapsto x + \psi(x)$ is invertible. Using integration by substitution, we establish
\begin{align*}
\|v(\cdot + \psi(\cdot)) - v_*(\cdot + \psi(\cdot))\|_{L^2(\R)}^2 \leq \frac{\|v-v_*\|_{L^2(\R)}^2}{1 - \|\psi'\|_{L^\infty(\R)}} \leq 8\eps^2,
\end{align*}
and, similarly,
\begin{align*}
\|v_*(\cdot + \psi_0(\cdot)) - v_0(\cdot + \psi_0(\cdot))\|_{L^2(\R)} \leq 2\eps.
\end{align*}
On the other hand, the mean-value theorem implies
\begin{align*}
\|v_*(\cdot + \psi_0(\cdot)) - v_*(\cdot+\psi(\cdot))\|_{L^2(\R)} \leq \|v_*'\|_{L^\infty(\R)} \|\psi - \psi_0\|_{L^2(\R)} \leq \|v_*'\|_{L^\infty(\R)} \eps.
\end{align*}
Combining the last three estimates yields the result.
\end{proof}

The following result shows that forward and inverse modulation induce equivalent norms. Related bounds appear in~\cite{zumbrun_2024_forward,johnson_2014_behavior}.

\begin{lemma}[Equivalence of forward and inverse modulation] \label{lem:equivalence}
Fix $k \in \N$ and $R > 0$. There exists a constant $C > 0$ such that for each $\psi \in H^{k+1}(\R)$ satisfying
\begin{align} \label{e:psi_cond}
\|\psi'\|_{H^k(\R)} \leq R, \qquad \|\psi'\|_{L^\infty(\R)} \leq \frac12,
\end{align}
the map $\mathrm{id} + \psi \colon \R \to \R$ is invertible with
\begin{align} \label{e:norm_equivalence}
\left\|f \circ (\mathrm{id} + \psi)^{-1} \right\|_{L^2(\R)} &\leq C \|f\|_{L^2(\R)}
\end{align}
for $f \in L^2(\R)$, and
\begin{align}\label{e:norm_equivalence2}
\left\|\partial_x^k \left(f \circ (\mathrm{id} + \psi)^{-1}\right) \right\|_{L^2(\R)} &\leq C \|f'\|_{H^{k-1}(\R)}
\end{align}
for $f \in H^k(\R)$.
\end{lemma}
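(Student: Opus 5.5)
Throughout, write $h = \mathrm{id} + \psi$ and $g = h^{-1}$. The plan is to prove invertibility and bounds on $g$ first, then obtain the $L^2$ estimate by a change of variables and the derivative estimate from Faà di Bruno's formula.

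\emph{Invertibility and the $L^2$ bound.} Since $\|\psi'\|_{L^\infty(\R)} \leq \frac12$, the map $h$ satisfies $h' = 1+\psi' \in [\frac12,\frac32]$. Hence $h$ is strictly increasing and bi-Lipschitz, and it is surjective because $|h(x)-x| \le |\psi(x)|$ is bounded ($\psi \in H^1 \hookrightarrow L^\infty$). So $g = h^{-1}$ exists and is $C^1$ with $g' = 1/(1+\psi'\circ g) \in [\frac23,2]$. Substituting $y = h(x)$ gives
\begin{align*}
\|f\circ g\|_{L^2(\R)}^2 = \int_\R |f(x)|^2 (1+\psi'(x)) \dx \le \tfrac32 \|f\|_{L^2(\R)}^2,
\end{align*}
which is \eqref{e:norm_equivalence}.

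\emph{Derivatives of $g$.} Write $g = \mathrm{id} + \tilde\varphi$ with $\tilde\varphi = -\psi\circ g$. Differentiating gives $\tilde\varphi' = -\psi'\circ g/(1+\psi'\circ g)$. Iterating this identity by induction, each $\partial_x^m \tilde\varphi$ with $1\le m\le k$ is a finite sum of products of the form
\begin{align*}
\mathcal{R}_j(\psi'\circ g)\,\prod_i (\partial_x^{a_i}\psi)\circ g, \qquad a_i\ge 1,\quad \sum_i (a_i-1) = m-1.
\end{align*}
Here $\mathcal{R}_j$ is bounded on $[-\frac12,\frac12]$. Using the embedding $H^1\hookrightarrow L^\infty$ on all factors except the highest-order one, together with \eqref{e:norm_equivalence} to remove the composition with $g$, I will bound $\|\partial_x^m\tilde\varphi\|_{L^2}\le C(R)\|\psi'\|_{H^{m-1}}$ and also $\|\partial_x^m \tilde\varphi\|_{L^\infty} \le C(R)$ for $m\le k-1$.

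\emph{The derivative estimate \eqref{e:norm_equivalence2}.} By Faà di Bruno,
\begin{align*}
\partial_x^k(f\circ g) = \sum_{m=1}^k (f^{(m)}\circ g)\, B_{k,m}(g',\dots,g^{(k-m+1)}),
\end{align*}
where $B_{k,m}$ are the Bell polynomials. For every term with $m\ge 2$, only derivatives of $g$ of order at most $k-1$ appear, so the coefficients are bounded in $L^\infty$ and \eqref{e:norm_equivalence} gives $\|f^{(m)}\circ g\|_{L^2}\le C\|f^{(m)}\|_{L^2}$. The term $m=1$ is $(f'\circ g)\,g^{(k)}$.

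\emph{Main obstacle.} The main difficulty is the $m=1$ term, since $g^{(k)}$ is controlled only in $L^2$ and not in $L^\infty$. I would put $g^{(k)}$ in $L^2$ and $f'\circ g$ in $L^\infty$, using $\|f'\circ g\|_{L^\infty} \le \|f'\|_{L^\infty} \le C\|f'\|_{H^1}$. This requires $k\ge 2$. For $k=1$, $g^{(1)} = g'$ is itself bounded, so no issue arises. In the case $k\ge2$, $\|f'\|_{H^1}\le\|f'\|_{H^{k-1}}$, so this closes. In every other term, the lower-order factors are likewise placed in $L^\infty$ via $H^1\hookrightarrow L^\infty$ at the cost of at most $k-1$ derivatives of $f'$. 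The whole argument rests on this bookkeeping of which factor goes in $L^2$ and which in $L^\infty$.
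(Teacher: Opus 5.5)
Your proposal is correct and follows the same route as the paper: the paper's proof simply invokes integration by substitution for the $L^2$ bound, and the chain rule together with the hypothesis on $\psi'$ and the embedding $H^1(\R)\hookrightarrow L^\infty(\R)$ for the derivative bound, and your Fa\`a di Bruno bookkeeping makes this explicit. One small simplification is available, because your ``main obstacle'' is not actually there: $\|\psi'\|_{H^k(\R)}\le R$ gives $\|\psi^{(a)}\|_{L^\infty(\R)}\le C\|\psi^{(a)}\|_{H^1(\R)}\le CR$ for every $1\le a\le k$, so your own structural formula shows that $g^{(k)}$ is bounded in $L^\infty$, and the $m=1$ term can then be handled exactly like the others, with no case split between $k=1$ and $k\ge 2$.
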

\begin{proof}
Let $\psi \in H^{k+1}(\R)$  satisfy~\eqref{e:psi_cond}. Then, $\mathrm{id} + \psi \colon \R \to \R$ is strictly increasing and therefore invertible. Consequently, using integration by substitution and the bound $\|\psi'\|_{L^\infty(\R)} \leq \frac12$, we obtain~\eqref{e:norm_equivalence}. Similarly, combining integration by substitution, the chain rule, and~\eqref{e:psi_cond}, together with the continuous embedding $H^1(\R) \hookrightarrow L^\infty(\R)$, yields a constant $C>0$, depending only on $k$ and $R$, such that~\eqref{e:norm_equivalence2} holds for all $f \in H^k(\R)$.  
\end{proof}

\bigskip

\subsection*{Data availability statement} The numerical simulations displayed in Figure~\ref{fig2} have been obtained using Matlab (Version 24.1.0 R2024a), where the initial condition has been constructed with the aid of Mathematica (Version 14.3.0.0). The code is available through the
link \url{https://www.waves.kit.edu/downloads/CRC1173_Preprint_2026-28_Codes.zip}. 

\printbibliography
\end{document}